\numberwithin{equation}{section}
\newcommand{\be}{\begin{equation}}
\newcommand{\ee}{\end{equation}}
\newcommand{\benn}{\begin{equation*}}
\newcommand{\eenn}{\end{equation*}}
\newcommand{\bea}{\begin{eqnarray}}
\newcommand{\eea}{\end{eqnarray}}
\newcommand{\diff}{\txtd}
\newcommand{\beann}{\begin{eqnarray*}}
\newcommand{\eeann}{\end{eqnarray*}}
\newtheorem{theorem}{Theorem}[section]
\newtheorem{proposition}[theorem]{Proposition}
\newtheorem{corollary}[theorem]{Corollary}
\newtheorem{lemma}[theorem]{Lemma}
\newtheorem{definition}[theorem]{Definition}
\newtheorem{remark}[theorem]{Remark}
\newtheorem{assumptions}[theorem]{Assumptions}
\newcommand{\qed}{\hfill $\Box$\smallskip}
\newcommand{\E}{\noindent{$\mathbb{E}$ \ }}
\def\P{\mathbb{P}}
\def\E{\mathbb{E}}
\def\e{\mathbb{E}}
\def\P{\mathbb{P}}
\def\cD{\mathcal{D}}
\def\cF{\mathcal{F}}
\def\cK{\mathcal{K}}
\def\cL{\mathcal{L}}
\def\cN{\mathcal{N}}
\def\cO{\mathcal{O}}
\def\cS{\mathcal{S}}
\def\Res{\text{Res}}
\def\txtd{{\textnormal{d}}}
\title{Amplitude equations for SPDEs driven by fractional additive noise with small Hurst parameter}
\author{Dirk Bl\"omker\thanks{Institut f\"ur Mathematik, Universit\"at Augsburg, Universit\"atsstra{\ss}e 12, 86135 Augsburg, Germany.~E-Mail: dirk.bloemker@math.uni-augsburg.de}~~~and~~Alexandra Neam\c tu\thanks{Department of Mathematics and Statistics, University of Konstanz, Universit\"atsstra{\ss}e~10, 78464 Konstanz, Germany. E-Mail: alexandra.neamtu@uni-konstanz.de} }
\begin{document}
\maketitle
\begin{abstract}
 We study stochastic partial differential equations (SPDEs) with potentially very rough fractional noise with Hurst parameter $H\in(0,1)$. Close to a change of stability measured with a small parameter $\varepsilon$, we rely on the natural separation of time-scales and establish a simplified description of the essential dynamics. We prove that up to an error term bounded by a power of $\varepsilon$ depending on the Hurst parameter we can approximate the solution of the SPDE in first order by an SDE, the so called amplitude equation, and in second order by a fast infinite dimensional Ornstein-Uhlenbeck process. To this aim we need to establish an explicit 
 averaging result for stochastic integrals driven by rough fractional noise for small Hurst parameters.
\end{abstract}

\section{Introduction}

The main goal of this work is to establish approximation results via amplitude equations for SPDEs perturbed by general additive fractional noise. More precisely, we consider SPDEs in  some Hilbert-space $X$ of the form
\begin{equation}\label{spde:i}
\begin{cases}
\txtd u = (L u+ \varepsilon^{2}\nu u +\mathcal{F}(u))~\txtd t + \varepsilon^{2H+1}~\txtd  W(t)\\
u(0)=u_0,
\end{cases}
\end{equation}
where $(W(t))_{t\geq 0}$ is a fractional Brownian motion in $X$ with Hurst index $H\in(0,1)$, $L$ is a self-adjoint non-positive 
operator with non-empty kernel $\mathcal{N}$ and
the nonlinearity $\mathcal{F}(u):=\mathcal{F}(u,u,u)$ is a trilinear cubic map, the prototype being $-u^3$. Furthermore the term $\varepsilon^2\nu$ for $\varepsilon,\nu>0$ can be viewed as a deterministic linear perturbation which shifts the spectrum of $L$. Here we will treat more general lower order linear terms later.

Several famous SPDEs such as  Ginzburg-Landau (also called Allen-Cahn eq.)
\begin{align*}
\txtd u = [\Delta u +\mu u - u^3]~\txtd t + \sigma \txtd W(t)
\end{align*}
or Swift-Hohenberg
\begin{align*}
    \txtd u =[ - (\Delta +1)^2 u +\mu u - u^3]~\txtd t +\sigma \txtd W(t)
\end{align*}
fall into our framework. The scaling of the parameters in \eqref{spde:i} is always in a way that linear (in)stability, 
the cubic nonlinearity and the noise are of the same order and have all an impact on the approximation.

In the  case $H=\frac{1}{2}$, where the forcing is a classical Wiener process we recover well-known results established in~\cite{BloemkerHairer1}. 
For example, for Swift-Hohenberg subject to periodic boundary conditions on the bounded domain 
$[0,2\pi]$ with $\mu=\nu\varepsilon^2$ and $\sigma=\varepsilon^2$ we obtain that 
\[
u(t) \approx \varepsilon a(\varepsilon^2t) + \varepsilon^2 Z(t)
\]
where $a$ lies in the two-dimensional dominant 
space of bifurcating patterns spanned by $\sin$ and $\cos$ and solves an SDE in that space. Moreover, $Z$ is a fast Ornstein-Uhlenbeck process given by the stochastic convolution orthogonal to the dominant space.

For $H\not=\frac12$ we have to take into account the scaling properties of the fractional Brownian motion
\begin{align*}
	& W(T\varepsilon^{-2}) \stackrel{\mbox{law}}{=} W(T)\varepsilon^{-2H} ~~\text{ and }	~~ \dot{W}(T\varepsilon^{-2}) \stackrel{\mbox{law}}{=} \varepsilon^{2-2H}\dot{W}(T).
\end{align*}
These indicates that the right scaling in~\eqref{spde:i} is indeed $\varepsilon^{2H+1}$, as this corresponds to rescaling to the slow time scale $ T=t\varepsilon^{2}$, which is introduced by the distance of order $\nu\varepsilon^2$ from the bifurcation. 
	
Similar to~\cite{BloemkerHairer1}, we rely on the natural separation of time-scales close to a change of stability and establish a simplified description of the essential dynamics proving an approximation result for the solutions of~\eqref{spde:i} via amplitude equations. However, in contrast to~\cite{BloemkerHairer1}, the error term of our approximation depends on the range of the Hurst parameter. More precisely,  we establish in Theorem~\ref{approx}
that 
for $H\geq \frac{1}{2}$ the approximation order is $3$, whereas for $H\in(0,\frac{1}{2})$ we obtain that the order is almost  
\[
	   1+2H+\frac{2H^2}{1-H}=\frac{1+H}{1-H} > 1.
\]
Note that this order is not only larger than order $1$, 
but also larger than $1+2H$, the scaling of the fractional noise.

There are several technical difficulties which we encounter when dealing with fractional noise. For instance even if the amplitude equation evolves on a slow-time scale, we need to establish an averaging result over a fast moving Ornstein-Uhlenbeck process, which is quite rough in time for small values of $H$.  We provide such a result in the proof of Lemma~\ref{pc:res} relying on Young-type estimates of convolutions combined with a suitable interpolation of the corresponding H\"older norms.

Amplitude equations are a well-known tool to 
study stochastic dynamics for SPDEs  close to a bifurcation 
especially for Brownian noise.
Apart from the already cited \cite{BloemkerHairer1}, which treats the Wiener case in a similar setting, there are numerous results.

In~\cite{BHP} the authors study large domains, where the dominating pattern is slowly modulated in space and the amplitude equation 
is thus an SPDE of Ginzburg-Landau type. See also \cite{BiDBSch:19} for an example of a fully unbounded domain.

Degenerate noise was studied in \cite{BloemkerMohammed} and several other publications. Here the noise is not acting directly on the dominating pattern, but through interaction of nonlinearity and
additive noise additional terms appear in the amplitude equation that  have the potential to stabilize the dynamics of the dominant modes. 

While all these references treated the case of additive Wiener noise, other types of noise were treated in the literature.
Multiplicative Wiener noise was studied by \cite{BloemkerFu}, 
while for additive fractional noise with $H>1/2$ certain results 
for Rayleigh-B\'enard convection are available in~\cite{BloemkerF}, 
but these results rely heavily on the smoothness of the noise and fail even in the case $H=1/2$.
Additive $\alpha$-stable L\'evy noise was studied recently in \cite{DBSY:22}, where the problem is not only the lack of smoothness,
but also the lack of moments.

Regarding qualitative properties of slow-fast systems with fractional noise we mention: homogenization results~\cite{GLi}, sample path estimates for additive fractional noise with $H>1/2$ \cite{EKN} and  averaging principles~\cite{HairerLi}. Here the slow variable is perturbed by multiplicative fractional noise, with Hurst index $H>1/2$ and the fast variable by a Brownian motion. These results rely on a novel application of the stochastic sewing lemma~\cite{KLe}. Based on this work averaging principles for slow-fast systems driven by independent fractional Brownian motions have been derived in~\cite{LiS}.
Here we follow a different more pathwise approach yielding for additive noise stronger error estimates.

Numerous extensions of our results are imaginable.
While including quadratic or higher order nonlinearities seem to be technical but possible,
unbounded or large domains as in \cite{BiDBSch:19} are a more challenging question.  
Moreover, it would be desirable to extend these results to SPDEs perturbed by rough nonlinear multiplicative noise. 
To this aim the explicit averaging we use does not seem to be available and one needs to generalize the results of~\cite{HairerLi} to SPDEs and derive averaging results for slow-fast SPDEs, where the slow variable is also perturbed by a random input which is rough in time. At the moment this seems out of reach but provides an exciting research perspective.

This work is structured as follows. In Section~\ref{s:fbm}
we collect important statements and properties of the fractional noise, for the convenience of the reader. Section~\ref{s:assumptions} contains the assumptions and a formal derivation of our main results which will become rigorous in Section~\ref{s:mainresults}. We conclude with two appendices providing H\"older-type estimates of (deterministic) convolutions and useful properties of stochastic convolutions with fractional noise. 


\section{Fractional Brownian Motion}
\label{s:fbm}


In this section we state important properties of the fractional Brownian motion which will be required later on. Further details can be looked up for example in~\cite{BiaginiHO, D}. We fix a complete probability space $(\Omega, \mathcal{F}, \mathbb{P})$ and use the abbreviation a.s.~for almost surely.

\begin{definition}
Let $H \in (0,1]$. A one-dimensional fractional Brownian motion (fBm) of Hurst index/parameter $H$ is a continuous centered Gaussian process $(\beta^H(t))_{t \geq 0}$ on $(\Omega,\mathcal{F},\mathbb{P})$ with covariance
\begin{align*}
\e[\beta^H(t) \beta^H(s)] = \frac{1}{2} \left( t^{2H} + s^{2H} - \left| t - s \right|^{2H} \right) \hspace{0.7cm} \text{for all } t,s \geq 0.
\end{align*}
\end{definition}

Note that for $H>1/2$ the covariance of the fBm satisfies 
\begin{align*}
\frac{1}{2} (t^{2H}  + s^{2H} - |t-s|^{2H} ) = H(2H-1) \int\limits_{0}^{t} \int\limits_{0}^{s} |v-u|^{2H-2} ~\diff v ~\diff u.
\end{align*}
We further observe that:
\begin{enumerate}
        \item [1)] for $H=1/2$ one obtains the Brownian motion;
        \item [2)] for $H=1$ then $\beta^{H}(t)= t \beta^{H}(1)$ a.s. for all $t\geq 0$. Due to this reason one always considers $H\in(0,1)$.
\end{enumerate}

The following result regarding the structure of the covariance of fBm holds true.

\begin{proposition} Let $H>1/2$. Then, the covariance of fBm has the integral representation
        \begin{align*}
        \e[\beta^H(t) \beta^H(s)]= \int\limits_{0}^{\min\{s,t\}} K(s,r) K(t,r)~\diff r~~\mbox{ for  } s,t\geq 0,
        \end{align*}
        where the integral kernel $K$ is given by
        \begin{align*}
        K(t,r)=c_{H}\int\limits_{r}^{t} \Big(\frac{u}{r} \Big)^{H-1/2}(u-r)^{H-3/2}~\diff u,
        \end{align*}
        for a positive constant $c_{H}$ depending exclusively on the Hurst parameter.
\end{proposition}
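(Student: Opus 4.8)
The plan is to deduce the claimed formula from the integral representation of the covariance recorded immediately before the statement, namely
\[
\mathbb{E}[\beta^H(t)\beta^H(s)] = H(2H-1)\int_0^t\!\!\int_0^s |v-u|^{2H-2}\,\diff v\,\diff u \qquad (H>1/2),
\]
by reducing everything to a pointwise identity for the integrands. Set $k(u,r):=c_H(u/r)^{H-1/2}(u-r)^{H-3/2}$ for $0<r<u$, so that the kernel in the statement is $K(t,r)=\int_r^t k(u,r)\,\diff u$. Since $H\in(1/2,1)$ the exponents satisfy $H-3/2\in(-1,-1/2)$ and $1-2H\in(-1,0)$, hence $k$ is positive and all the integrands occurring below are locally integrable at the endpoints of their ranges of integration; by Tonelli's theorem every interchange of the order of integration is then permitted. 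Substituting $K(s,r)=\int_r^s k(u,r)\,\diff u$ and $K(t,r)=\int_r^t k(v,r)\,\diff v$ into $\int_0^{s\wedge t}K(s,r)K(t,r)\,\diff r$ and swapping orders of integration --- using that $r<u\le s$ and $r<v\le t$ force $r<s\wedge t$ --- yields
\[
\int_0^{s\wedge t}K(s,r)K(t,r)\,\diff r = \int_0^s\!\!\int_0^t g(u,v)\,\diff v\,\diff u, \qquad g(u,v):=\int_0^{u\wedge v}k(u,r)k(v,r)\,\diff r .
\]
Comparing with the representation of the covariance, it therefore suffices to prove $g(u,v)=H(2H-1)\,|u-v|^{2H-2}$ for a.e.\ $(u,v)$, which will simultaneously fix the constant $c_H$.

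To compute $g$ I would assume $0<u<v$ without loss of generality, so that $g(u,v)=c_H^2(uv)^{H-1/2}\int_0^u r^{1-2H}(u-r)^{H-3/2}(v-r)^{H-3/2}\,\diff r$, and then perform the rescaling $r=ux$ together with $(v-ux)^{H-3/2}=v^{H-3/2}(1-\tfrac uv x)^{H-3/2}$, which leaves, up to explicit powers of $u$ and $v$, the integral $\int_0^1 x^{(2-2H)-1}(1-x)^{(H-1/2)-1}(1-\tfrac uv x)^{H-3/2}\,\diff x$. A second, Möbius-type change of variables $y=\tfrac{x(v-u)}{v-ux}$ (equivalently $x=\tfrac{vy}{v-u+uy}$) removes the last factor: a short computation shows that the resulting powers of the denominator $v-u+uy$ cancel, leaving $(1-\tfrac uv)^{2H-2}$ times the Beta integral $\int_0^1 y^{(2-2H)-1}(1-y)^{(H-1/2)-1}\,\diff y=B(2-2H,H-1/2)$. (Equivalently one may recognise the rescaled integral as Euler's representation of $B(2-2H,H-1/2)\,{}_2F_1(3/2-H,2-2H;3/2-H;u/v)$ and use the degenerate identity ${}_2F_1(a,b;a;z)=(1-z)^{-b}$.) Reassembling, all powers of $u$ and $v$ cancel and one obtains $g(u,v)=c_H^2\,B(2-2H,H-1/2)\,(v-u)^{2H-2}$; by symmetry this holds with $|u-v|^{2H-2}$ for all $u\neq v$. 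Choosing $c_H:=\bigl(H(2H-1)/B(2-2H,H-1/2)\bigr)^{1/2}$ then gives $g(u,v)=H(2H-1)|u-v|^{2H-2}$, and integrating over $[0,s]\times[0,t]$ completes the proof.

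The only genuinely delicate point is the special-function computation in the second paragraph: finding the right change of variables (whether phrased via the Möbius substitution or via the Gauss hypergeometric function) and verifying that the powers of $u$, $v$ and of the auxiliary denominator really do cancel, so that a pure multiple of $|u-v|^{2H-2}$ survives. Everything else --- the Tonelli interchanges, the local integrability at $r=0$ and $r=u\wedge v$, and the symmetry reduction --- is routine, and, as noted, hinges precisely on the restriction $H\in(1/2,1)$, which is exactly what keeps $B(2-2H,H-1/2)$ finite and all integrands locally integrable.
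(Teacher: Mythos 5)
Your proposal is correct. Note, however, that the paper does not prove this proposition at all: it appears in the background Section 2, where the authors explicitly defer to the literature (Biagini--Hu--{\O}ksendal--Zhang and Decreusefond--\"Ust\"unel), so there is no in-paper argument to compare against. Your self-contained verification is essentially the standard Molchan--Golosov computation, and the details check out: the Tonelli interchange is legitimate because every integrand is nonnegative and the exponents $1-2H\in(-1,0)$ and $H-3/2\in(-1,-1/2)$ keep everything locally integrable; the reduction to the pointwise identity $g(u,v)=H(2H-1)|u-v|^{2H-2}$ is the right key step; and the inner integral does evaluate as you claim. Concretely, with $D:=v-u+uy$ your substitution gives $x=vy/D$, $1-x=(v-u)(1-y)/D$, $1-(u/v)x=(v-u)/D$ and $\diff x=v(v-u)D^{-2}\diff y$, so the total power of $D$ is $(2H-1)+(3/2-H)+(3/2-H)-2=0$ and one is left with $v^{2-2H}(v-u)^{2H-2}B(2-2H,H-1/2)$, in agreement with the degenerate hypergeometric identity ${}_2F_1(a,b;a;z)=(1-z)^{-b}$ applied with $a=c=3/2-H$, $b=2-2H$. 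Both arguments of the Beta function are positive precisely because $H\in(1/2,1)$, so your choice $c_H=\bigl(H(2H-1)/B(2-2H,H-1/2)\bigr)^{1/2}$ is well defined and matches the constant found in the cited references. The one presentational caveat is that the proposition asserts the representation for \emph{some} positive $c_H$, and your proof in fact constructs it, which is slightly more than is asked and entirely acceptable.
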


We remark that for suitable square integrable kernels, one obtains different stochastic processes, for instance the multi-fractional Brownian motion or the Rosenblatt process~\cite{CM}.

\begin{proposition}[{\bf Correlation of the increments}] 
Let $(\beta^{H}(t))_{t\geq 0}$ be a fBm of Hurst index $H\in(0,1)$. Then its increments are:
  \begin{itemize}
    \item [1)] positively correlated for $H>1/2$;
    \item [2)] independent for $H=1/2$;
    \item[3)] negatively correlated for $H<1/2$.
  \end{itemize}
Particularly, for $H>1/2$ fBm exhibits long-range dependence, i.e.
\[
\sum\limits_{n=1}^{\infty}\mathbb{E} [\beta^{H}(1) (\beta^{H}(n+1)-\beta^{H}(n))]=\infty ,
\]
whereas for $H<1/2$ 
\[
\sum\limits_{n=1}^{\infty}\mathbb{E} [\beta^{H}(1) (\beta^{H}(n+1)-\beta^{H}(n))]<\infty.
\]
\end{proposition}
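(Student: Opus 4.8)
The plan is to reduce everything to the elementary fact that $\phi(x):=x^{2H}$ is strictly convex on $[0,\infty)$ when $H>\tfrac12$, affine when $H=\tfrac12$, and strictly concave when $H<\tfrac12$. First I would record, straight from the covariance formula in the definition, that for any $0\le a\le b\le c\le d$ the two increments satisfy
\begin{align*}
\e\big[(\beta^{H}(b)-\beta^{H}(a))(\beta^{H}(d)-\beta^{H}(c))\big]=\tfrac12\big(\phi(d-a)-\phi(c-a)-\phi(d-b)+\phi(c-b)\big),
\end{align*}
because on expanding the product all the pure powers $a^{2H},b^{2H},c^{2H},d^{2H}$ cancel and only the $|t-s|^{2H}$-terms remain; note here that $(d-a)=(c-a)+(d-c)$ and $(d-b)=(c-b)+(d-c)$.

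To read off the sign, set $h:=d-c\ge 0$ and $g(x):=\phi(x+h)-\phi(x)$, so that the right-hand side above equals $\tfrac12\big(g(c-a)-g(c-b)\big)$ with $c-a\ge c-b\ge 0$. If $\phi$ is convex then $\phi'$ is nondecreasing, hence $g$ is nondecreasing and the bracket is $\ge 0$; if moreover $H>\tfrac12$ then $\phi$ is strictly convex and, for non-degenerate increments on disjoint intervals (so $a<b$ and $c<d$, i.e. $h>0$ and $c-a>c-b$), $g$ is strictly increasing and the covariance is strictly positive. The same argument with $\phi$ concave gives strictly negative covariance when $H<\tfrac12$, and when $H=\tfrac12$, $\phi$ is affine, $g$ is constant, and the covariance vanishes, recovering independence of the increments of Brownian motion. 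This settles items 1)--3).

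For the long-range behaviour I would specialise to $a=0,\ b=1,\ c=n,\ d=n+1$, obtaining
\begin{align*}
r_{n}:=\e\big[\beta^{H}(1)(\beta^{H}(n+1)-\beta^{H}(n))\big]=\tfrac12\big((n+1)^{2H}-2n^{2H}+(n-1)^{2H}\big),
\end{align*}
the second central difference of $\phi$ at $n$. For $n\ge 2$, Taylor's theorem (or applying the mean value theorem twice) gives $r_{n}=H(2H-1)\,\xi_{n}^{\,2H-2}$ for some $\xi_{n}\in(n-1,n+1)$; for $H>\tfrac12$ one may instead quote the integral representation stated just before the proposition to write $r_{n}=H(2H-1)\int_{0}^{1}\!\int_{n}^{n+1}(v-u)^{2H-2}\,\diff v\,\diff u$. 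If $H>\tfrac12$ then $2H-2\in(-1,0)$, so $r_{n}\ge H(2H-1)(n+1)^{2H-2}$ and $\sum_{n}(n+1)^{2H-2}=\infty$, whence $\sum_{n}r_{n}=\infty$. If $H<\tfrac12$ then $2H-2<-1$ and $|r_{n}|\le H(1-2H)(n-1)^{2H-2}$ for $n\ge 2$, while $r_{1}=\tfrac12(2^{2H}-2)$ is finite, so $\sum_{n}|r_{n}|<\infty$ and the series converges.

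I do not expect a serious obstacle here: the proof is entirely elementary. The only places needing a little care are the bookkeeping in the mixed-difference identity, keeping the convexity inequality strict so as to obtain strict (not merely weak) correlation, and choosing the comparison exponent correctly in the divergence/convergence estimate for $r_{n}$ (in particular treating $n=1$ separately in the concave case, where $\phi$ fails to be $C^{2}$ at the origin).
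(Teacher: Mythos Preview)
Your argument is correct. The covariance identity for disjoint increments, the convexity/concavity reading of its sign via $g(x)=\phi(x+h)-\phi(x)$, and the second-difference asymptotics $r_n=H(2H-1)\xi_n^{2H-2}$ all check out; the care you take with strictness and with the $n=1$ term in the concave case is appropriate.

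As for comparison: the paper does not actually prove this proposition. Section~\ref{s:fbm} is explicitly a collection of background facts on fBm, with the blanket remark that ``further details can be looked up for example in~\cite{BiaginiHO, D}'', and the proposition is stated without proof. So there is no ``paper's own proof'' to compare against; your self-contained elementary derivation supplies what the paper outsources to the literature. The one small point worth adding for completeness is that in item~2) the vanishing covariance yields \emph{independence} only because the process is Gaussian --- you use this implicitly (``recovering independence of the increments of Brownian motion''), but it is the step that turns zero correlation into the stronger conclusion the proposition claims.
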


\begin{proposition} Let $(\beta^{H}(t))_{t\geq 0}$ be a fBm of Hurst index $H\in(0,1)$. Then the following assertions hold true.
        \begin{itemize}
                \item [1)] [Self-similarity.] For $a\geq 0$
                \begin{align}\label{self:sim}
                (a^H \beta^H(t))_{t \geq 0} \overset{\emph{law}}{=} (\beta^H(at))_{t \geq 0},
                \end{align}
                i.e.~fBm is self-similar with Hurst index $H$.
                \item [2)] [Time inversion.] ~~$\Big(t^{2H}\beta^{H}(1/t) \Big)_{t>0} \overset{\emph{law}}{=}(\beta^{H}(t))_{t>0}. $
                \item[3)] [Stationarity of increments.] For all $h>0$
                \begin{align*}
                (\beta^H(t+h) - \beta^{H}(h))_{t \geq 0} \overset{\emph{law}}{=} (\beta^H(t))_{t \geq 0}.
                \end{align*}
                \item [4)] [Regularity of the increments.] The fBm has a version which is a.s.~H\"older continuous of exponent $0\leq \beta<H$.
                \item [5)] [Non-Markovianity.] For $H\neq 1/2$ the fBm is neither a semi-martingale, nor a Markov process.
        \end{itemize}     
\end{proposition}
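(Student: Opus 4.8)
The statement collects five standard properties of fBm, and each follows from the defining covariance structure together with the fact that a centered Gaussian process is determined by its covariance. I would organize the proof around this last observation: to prove two centered Gaussian processes are equal in law, it suffices to check that their covariance functions agree.

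\textbf{Self-similarity (1).} Both $(a^H\beta^H(t))_{t\ge 0}$ and $(\beta^H(at))_{t\ge 0}$ are continuous centered Gaussian processes. I would compute the covariance of the left-hand process, $\e[a^H\beta^H(t)\,a^H\beta^H(s)] = a^{2H}\cdot\frac12(t^{2H}+s^{2H}-|t-s|^{2H})$, and the covariance of the right-hand process, $\e[\beta^H(at)\beta^H(as)]=\frac12((at)^{2H}+(as)^{2H}-|at-as|^{2H}) = a^{2H}\cdot\frac12(t^{2H}+s^{2H}-|t-s|^{2H})$. They coincide, hence equality in law.

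\textbf{Time inversion (2).} Set $Y(t):=t^{2H}\beta^H(1/t)$ for $t>0$; it is centered Gaussian (and continuous on $(0,\infty)$). Its covariance for $s,t>0$ is $\e[Y(t)Y(s)] = (ts)^{2H}\cdot\frac12\big((1/t)^{2H}+(1/s)^{2H}-|1/t-1/s|^{2H}\big)$. Expanding $|1/t-1/s|^{2H} = |s-t|^{2H}/(ts)^{2H}$ and multiplying through by $(ts)^{2H}$ gives $\frac12(s^{2H}+t^{2H}-|s-t|^{2H})$, which is exactly the fBm covariance. Since both processes are centered Gaussian with the same covariance on $(0,\infty)$, they agree in law.

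\textbf{Stationarity of increments (3).} For fixed $h>0$ put $Y(t):=\beta^H(t+h)-\beta^H(h)$, a centered Gaussian process with $Y(0)=0$. Expanding $\e[Y(t)Y(s)]$ using bilinearity and the covariance formula, the terms involving $\beta^H(h)$ cancel and one is left, after using $(t+h)^{2H}+(s+h)^{2H}-((t+h)-(s+h))^2{}^H$-type algebra, with $\frac12(t^{2H}+s^{2H}-|t-s|^{2H})$; I would carry out this short cancellation explicitly. Equality of covariances of centered Gaussian processes again gives the claim.

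\textbf{Regularity of increments (4).} This is the one genuinely analytic point and the main obstacle: one must produce a H\"older-continuous modification. I would apply the Kolmogorov continuity theorem. Using that for a centered Gaussian random variable all moments are controlled by the variance, $\e|\beta^H(t)-\beta^H(s)|^p = C_p\big(\e|\beta^H(t)-\beta^H(s)|^2\big)^{p/2} = C_p|t-s|^{pH}$, where I used stationarity of increments (part 3) together with $\e|\beta^H(u)|^2 = u^{2H}$ to get $\e|\beta^H(t)-\beta^H(s)|^2=|t-s|^{2H}$. Kolmogorov's criterion then yields a modification that is a.s.\ H\"older of any exponent $\beta < (pH-1)/p = H - 1/p$; letting $p\to\infty$ gives every exponent $\beta<H$.

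\textbf{Non-Markovianity (5).} For $H\ne 1/2$ I would argue fBm is not a semimartingale by a quadratic-variation / $p$-variation argument: for $H<1/2$ the quadratic variation over $[0,t]$ is a.s.\ infinite (the sum $\sum(\beta^H(t_{i+1})-\beta^H(t_i))^2$ diverges, since each term has expectation of order $(\Delta t)^{2H}\gg \Delta t$), which is incompatible with being a semimartingale; for $H>1/2$ the quadratic variation is zero while the total variation is infinite, and a continuous finite-variation process that is also a martingale must be constant, again a contradiction with the nondegenerate Gaussian law. For the failure of the Markov property one can invoke that a Gaussian process with continuous paths is Markov iff its covariance $R(s,t)$ satisfies the triangular relation $R(s,u)R(t,t)=R(s,t)R(t,u)$ for $s\le t\le u$, and a direct substitution of three times shows this fails for the fBm covariance when $H\ne 1/2$. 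I would cite \cite{BiaginiHO} for the standard details of (5) rather than reproduce them in full.

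In summary, parts (1)--(3) are immediate covariance computations exploiting that centered Gaussian laws are determined by covariances, part (4) is Kolmogorov's continuity theorem fed by the Gaussian moment bound, and part (5) is the standard semimartingale/Markov obstruction; the only step requiring real care is the modification argument in (4).
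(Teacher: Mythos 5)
Your proposal is correct and consists of the standard arguments: parts (1)--(3) by matching covariances of centered Gaussian processes, part (4) by the Gaussian moment bound $\E|\beta^H(t)-\beta^H(s)|^p=C_p|t-s|^{pH}$ fed into Kolmogorov's continuity criterion, and part (5) by the $p$-variation/quadratic-variation obstruction and the triangular covariance criterion for Gaussian Markov processes. The paper itself states this proposition as background without any proof, deferring to the cited literature (\cite{BiaginiHO, D}), so there is no in-paper argument to compare against; your sketch is exactly what those references contain, and your own deferral of the technical details of (5) to \cite{BiaginiHO} mirrors the paper's treatment. The only point deserving a remark if you wrote this out in full is that in (5), for $H<1/2$, divergence of the expected quadratic variation alone does not give a.s.\ (or in-probability) divergence --- one needs the standard second-moment/ergodicity argument --- but since you delegate that step to the reference, this is not a gap in the plan.
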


For our aims, the scaling property~\eqref{self:sim} implies that
\[
 \beta^H(T\varepsilon^{-2}) \stackrel{\mbox{law}}{=} \beta^H(T)\varepsilon^{-2H} \qquad\text{and}\qquad 
 \dot{\beta}^{H}(T\varepsilon^{-2}) \stackrel{\mbox{law}}{=} \varepsilon^{2-2H}\dot{\beta}^{H}(T).
\]
In this work we consider an infinite-dimensional fractional Brownian motion with Hurst index $H\in(0,1)$ as defined below. Let $Q$ be a non-negative self-adjoint, trace-class operator on a separable Hilbert space $(X,\langle \cdot,\cdot \rangle)$. 
\begin{definition} 
Let $Q$ be a linear operator on $X$ of trace class. 
    An $X$-valued Gaussian process $(W^H(t)_{t\geq 0}$ on $(\Omega,\mathcal{F},\mathbb{P})$ is called a $Q$-fractional Brownian motion if
    \begin{itemize}
        \item [1)] $\e[W^H(t)]=0\quad\text{ for all } t\in\mathbb{R}_{+}$
        \item [2)] $\e \langle W^H(t),u \rangle 
        \langle W^H(s), v\rangle
        =\frac{1}{2}[t^{2H}+s^{2H}-|t-s|^{2H}]\langle Qu,v \rangle\quad\text{ for all } s,t\in\mathbb{R}_{+} \text{ and  } u,v \in X.$
    \end{itemize}
    In this case
    \begin{align*}
    W^H(t)=\sum\limits_{n=1}^{\infty} \sqrt{q_n}\beta^H_n(t) e_n,
    \end{align*}
    where $(e_n)_{n\geq 1}$ is an orthonormal basis of $X$ consisting of eigenvectors of $Q$, $(q_n)_{n\geq 1}$ are the corresponding non-negative eigenvalues, i.e. $Qe_n=q_n e_n$ and $(\beta^H_n(t))_{t\geq 0}$ is a sequence of independent standard fractional Brownian motions with Hurst index $H$. 
\end{definition}

%
\section{Notations and assumptions}\label{s:assumptions}
%

We introduce suitable notations and assumptions which will be used throughout this manuscript. If not further stated, $X$ stands for a separable Hilbert space, $\|\cdot\|$ and $\langle\cdot,\cdot\rangle$ always stand for the norm, respectively for the scalar product in $X$. In the following we list classical assumptions on the linear operators $L$ and $A$, the cubic nonlinear term $\cF$ and the noise, similar to~\cite{BloemkerHairer1}.

\begin{assumptions}(Differential operator $L$)\label{ass:L}
	The operator $L$ has a compact resolvent and generates an analytic $C_0$-semigoup $\{e^{tL}\}_{t\geq 0}$ in $X$. We denote its domain by $\cD(L)$, set $\cN:=\text{Ker}(L)$ and assume that this is finite dimensional. 
	Moreover, we denote with $P_c$ the orthogonal projection onto $\cN$, set $P_s:=\text{Id}-P_c$ and assume for simplicity that $P_s$ and $P_c$ commute with $L$ and therefore with the semigroup $(e^{tL})_{t\geq 0}$. 
	Furthermore we define $\cS=P_sX$ and obtain an orthogonal decomposition of $X$ as $X=\cN\oplus \cS$. 
	We assume that there exist constants $M\geq 1$ and $\mu>0$ such that
	\begin{align}\label{stability:s}
	\|e^{tL}P_s\|_{\cL(X,X)} \leq M e^{-t\mu} \quad\text{ for all } t\geq 0.
	\end{align}
We define the fractional power spaces $X^{\alpha}:=\cD((\text{Id}-L)^{\alpha})$ for $\alpha\geq 0$.
\end{assumptions}
We will use the following well-known estimates for analytic semigroups and fractional power spaces.
\begin{itemize}
	\item [1)] For all $\alpha\in[0,1)$ there exists a constant $M\geq 1$ such that
	\begin{align*}
	\|e^{tL}\|_{\cL(X^{-\alpha},X)}\leq M (1+t^{-\alpha})\quad\text{ for all } t>0.
	\end{align*}
	\item [2)] Combining this with~\eqref{stability:s} we obtain the existence of a constant $\tilde{M}\geq 1$ such that
	\begin{align}\label{fr:power:s}
	\|P_s e^{tL}\|_{\cL(X^{-\alpha},X)} \leq \tilde{M} (1+t^{-\alpha})e^{-t\tilde{\mu}}\quad \text{ for all } t>0,
	\end{align}
where $0<\tilde{\mu}<\mu$.
\end{itemize}
\begin{assumptions}\label{ass:A}(Bounded perturbation $A$)
	We  assume that $A:D(L)\to X$ and that $A$ is a bounded linear operator from $X$ to $X^{-\alpha}$ for some $\alpha\in[0,1)$.
\end{assumptions}
Note that $A_c=P_cA$ is a bounded linear operator on $X$.
Here we use the standard shorthand notations to refer to $A$ and $\cF$ on $\cN$ respectively $\cS$, i.e. $A_{c/s}=P_{c/s}A$, $\cF(u)=\cF(u,u,u)$, $\cF_{c/s}=P_{c/s}\cF$ and so on.
\begin{assumptions}(Nonlinear term)\label{ass:F}
	The function $\cF:X^3\to X^{-\alpha}$ for $\alpha\in[0,1)$ from Assumption \ref{ass:A} is continuous, trilinear and symmetric. 
	 Furthermore, we impose the following sign conditions on $\cN$
	\begin{align}
	\langle \cF_c(v_c),v_c \rangle <0 \quad &\text{ for all } v_c\in\cN\setminus\{0\},\label{f:sign}
	\\
	\langle \cF_c(v_c,v_c,w_c),w_c\rangle < 0 \quad &\text{ for all  } v_c,w_c \in \cN\setminus\{0\}\label{f1}
	\end{align}
	and assume that for some small $\eta$ we can find a constant $C_\eta>0$ such that 
		\begin{align}\label{f:stable:n}
	    \langle \cF_c(\phi+v_c),v_c\rangle  \leq C_\eta \|\phi\|^4 - \eta \|v_c\|^4, \text{ for all } v_c,\phi\in\cN.
	\end{align}
\end{assumptions}

\begin{assumptions}(Noise)\label{ass:W}
	The noise term is a trace-class fractional Brownian motion with Hurst index $H\in(0,1)$ with values in $X$.
\end{assumptions}

Throughout this manuscript we use the $\cO$ notation in the following way.
\begin{definition}
    We say that a term $F_{\varepsilon}=\cO(f_{\varepsilon})$ if and only if there exist positive $\varepsilon$-independent constants $C$ and $\varepsilon_0$ such that $|F_{\varepsilon}|\leq C f_{\varepsilon}$ for all $\varepsilon\in(0,\varepsilon_0]$.
\end{definition} 


\begin{assumptions} (Initial data)\label{incond}
    We assume that $u_0=\cO(\varepsilon^{1-\kappa})$ and $P_su_0=\cO(\varepsilon^{2H+1-\kappa})$ for a small $\varepsilon>0$ and a small $\kappa>0$.
\end{assumptions}



\paragraph{Formal derivation of the main result.}
 We make the usual ansatz known from the white noise case, 
where we suppose close to a change of stability small solutions 
evolving on a slow time-scale on the dominant modes, 
while all the other modes are subject to fast damping and small. 
Since we deal with fractional noise we make the ansatz
\begin{align}\label{ansatz}
u(t)=\varepsilon a(\varepsilon^{2}t) + \varepsilon^{2H+1}\psi_s(t) + \mathcal{O}(\varepsilon^{\gamma}),
\end{align}
where $a\in \cN$, $\psi_s\in \cS$ and the order $\gamma$ of the error depends on the range of the Hurst parameter. We will show that for $H\geq \frac{1}{2}$ the approximation is almost of order $\gamma=3$, whereas for $H\in(0,\frac{1}{2})$ we will obtain almost the order $\gamma=2H+1+\frac{2H^2}{1-H}=\frac{1+H}{1-H}$. 

Rescaling to the slow time-scale $T=\varepsilon^{2} t$, plugging the ansatz \eqref{ansatz} into~\eqref{spde:i}, projecting onto $\cN$ and collecting the terms of larges order $\varepsilon^{3}$ yields the amplitude equation on the slow time-scale
\begin{align*}
\partial_{T} a = [\nu a - P_{c} a^{3} ] ~\txtd T +  \partial_{T}b(T),
\end{align*}
consequently
\begin{align*}
a(T) = a_{0} +\int\limits_{0}^{T}(\nu  a(s) -P_{c}(a(s) )^{3})~\txtd s + b(T),
\end{align*}
where 
\[
b(T):=\varepsilon^{2H}P_{c}W(\varepsilon^{-2}T)
\] 
is a rescaled fractional Brownian motion in $\cN$ with distribution independent of $\varepsilon$. Thus the law of 
$a$ is also independent of $\varepsilon$. Projecting onto $\cS$ and collecting the terms of order $\varepsilon^{2H+1}$ results in
\begin{align*}
\txtd\psi_s = L \psi_s ~\txtd t + P_{s} ~\txtd W(t),
\end{align*} 
so $\psi_s$ is a fractional Ornstein-Uhlenbeck process given by
\begin{align}\label{ou}
\psi_s(t) = e^{tL}\psi_s(0) + \int_0^t P_s e^{L(t-s)}~\txtd W(s).
\end{align}

 The main goal of the next section is to provide a rigorous proof of~\eqref{ansatz}. To this aim we need averaging results for the error term that contains products of the fast moving Ornstein-Uhlenbeck process $\psi_s$
 and the amplitude $a$ which 
 evolves on the slow-time scale. 
 Note that the fast Ornstein-Uhlenbeck process can be very rough in time when the values of $H$ are close to zero.

%
\section{Statement of the problem and main results}\label{s:mainresults}
%

Let $\varepsilon>0$ be small. Under Assumptions~\ref{ass:L}--\ref{incond} we consider the SPDE
\begin{equation}\label{spde}
\begin{cases}
\txtd u = (L u+ \varepsilon^{2}A u +\cF(u) )~\txtd t + \varepsilon^{2H+1}~\txtd  W(t)\\
u(0)=u_0,
\end{cases}
\end{equation}
where $W:=W^{H}$ is a fractional Brownian motion with Hurst index $H\in(0,1)$. 
\begin{definition}{\bf (Mild solution)}
	Let $t^{\star}$ be a positive stopping time. A stochastic process $u:[0,t^\star)\to X$ is called mild solution for~\eqref{spde} if $\mathbb{P}$-a.s.
	\begin{align}\label{mild:sol}
	u(t) = e^{tL} u_0 +\int\limits_0^t e^{(t-\tau)L} [\varepsilon^2 A u (\tau) +\cF(u(\tau))]~\txtd \tau  +\varepsilon^{2H +1}\int\limits_0^t e^{(t-\tau)L}~\txtd W(\tau) ~\quad t\in(0,t^{\star}).
	\end{align}
	Moreover, the solution is maximal so that either $t^{\star}=\infty$ or $\|u(t)\|_{X}\to\infty$ for $t\nearrow t^{\star}$.
\end{definition}
The stochastic integral
\begin{align*}
W_L(t): =\int\limits_0^t e^{(t-\tau)L}~\txtd W(\tau)
\end{align*}
is called stochastic convolution. For its precise construction we refer to~\cite{DuncanMaslowski} for $H\in(\frac{1}{2},1)$ and~\cite{DuncanMaslowski2} for $H\in(0,\frac{1}{2})$. Under suitable assumptions on the coefficients there exists a global-in-time solution for~\eqref{spde}, see~\cite{DuncanMaslowski, GubinelliTindel} and the references specified therein. For our aims, local-in-time existence of mild solutions suffices, which is standard due to our assumptions on the coefficients. The main tool in the proof is a pathwise fixed-point argument, but we do not give any further details here.

Since the projections commute with the semigroup, we have
\begin{align*}
P_s [W_L(t)] =\int\limits_0^t e^{(t-\tau)L}~\txtd P_sW(\tau) \quad \text{ and } \quad P_c[W_L(t)]=P_cW(t).
\end{align*}
Applying $P_s$ and $P_c$ to~\eqref{mild:sol} we obtain 
\begin{align*}
u_s(t):=P_s u(t) = e^{tL}P_su_0 + \int\limits_0^t e^{(t-\tau)L} [\varepsilon^2 A_s u(\tau) +\cF_s(u(\tau)) ]~\txtd\tau + \varepsilon^{2H+1} \int\limits_0^t e^{(t-\tau)L}~\txtd P_s W(\tau)
\end{align*}
and 
\begin{align*}
u_c(t):=P_c u(t) =P_c u_0 +\int\limits_0^t [\varepsilon^2 A_c u(\tau) +\cF_c(u(\tau)) ]~\txtd\tau + \varepsilon^{2H+1}P_c W(t).
\end{align*}
\begin{definition}
	We call $u_s(t)=P_s u(t)\in\cS$ fast modes, since they are subject to a deterministic exponential decay on a time-scale of order $\cO(1)$. Moreover $u_c(t)=P_cu(t)\in\cN$ are referred to as slow modes, since they change only on the slow time scale $T=\varepsilon^2 t$.
\end{definition}

\begin{remark}\label{k:rem}
Note that the influence of the noise on the slow and fast time scale is different. More precisely, due to the scaling properties of the fractional Brownian motion~\eqref{self:sim}, we have for every small $\kappa>0$
\begin{align*}
\sup\limits_{t\in[0,T_0\varepsilon^{-2}]}\|P_c W(t)\| \stackrel{\mbox{law}}{=} \varepsilon^{-2H} \sup\limits_{T\in[0,T_0]} \|P_cW(T)\| =\mathcal{O}(\varepsilon^{-2H-\kappa})
 \end{align*}
 with probability almost $1$,
 whereas, as we show in Appendix~\ref{stoch:conv}, with high probability
 \begin{align*}
    \sup\limits_{t\in[0,T_0\varepsilon^{-2}]} \|P_sW_L(t)\|=\cO(\varepsilon^{-\kappa}).
 \end{align*}
 This means that on $u_c$ the noise acts like a fractional Brownian motion in $\cN$, whereas on the fast component $u_s$ we have an infinite-dimensional fractional Ornstein-Uhlenbeck process. The small exponent $\kappa>0$ is only inserted for simplicity in order to have the probability close to $1$ for small $\varepsilon$. 
\end{remark}

The main goal of this section is to justify~\eqref{ansatz}  and to control the approximation error. To this aim we firstly introduce the stochastic process $a\in C([0,T];\cN)$ which solves the amplitude equation
\begin{align}\label{amplitude:eq}
\partial_T a(T)= A_c a(T) + \cF_c(a(T)) + \partial_T b(T)
\end{align}
with initial condition $a(0)=\varepsilon^{-1}P_cu_0$
where $b(T)=\varepsilon^{2H}P_c(W(T\varepsilon^{-2}))$ is a finite-dimensional fractional Brownian motion on $\cN$ independent of $\varepsilon$ in law due to the scaling properties of $W$. 
We further set $\psi_c(t):=a(\varepsilon^2 t)$. We also define the fractional Ornstein-Uhlenbeck process $\psi_s$ as 
\begin{align}\label{psi:s}
\psi_s(t)=e^{tL}\psi_s(0) +P_sW_L(t),
\end{align}
where $\psi_s(0)=\varepsilon^{-2H-1}P_su_0$. We define our approximation of~\eqref{spde} by
\begin{align}\label{psi}
\psi(t):=\varepsilon \psi_c(t) +\varepsilon^{2H+1} \psi_s(t). 
\end{align}
In order to prove the error estimate, we need to control the following residual
\begin{align}\label{residual}
\Res(\psi(t)):=-\psi(t) + e^{tL}\psi(0) +\int\limits_0^t e^{(t-\tau)L} [\varepsilon^2 A(\psi(\tau)) +\cF(\psi(\tau)) ]~\txtd \tau + \varepsilon^{2H+1}W_L(t),
\end{align}
which measures the quality of the approximation $\psi$.
In conclusion we have to  establish the following results:
\begin{itemize}
	\item attractivity;
	\item bounds on the residual;
	\item control the approximation order.
\end{itemize}
%
%
\paragraph{Attractivity.} 
If the initial condition $u_0$ is of order $\varepsilon$ then the attractivity result shows that 
$u_c(t_{\varepsilon})$ is of order $\varepsilon$ 
and $u_s(t_{\varepsilon})$ is of order $\varepsilon^{2H+1}$ for all times $t_\varepsilon$ of order $\ln(\varepsilon^{-2H})$. This justifies at time $t_{\varepsilon}$ 
the scaling of the formal ansatz 
we use in the approximation in \eqref{psi}.

First we verify that $u$ stays uniformly small up to a logarithmic time if we take small initial data.
\begin{lemma}\label{lemma:attractivity}
	Let Assumptions~\ref{ass:L}--\ref{incond} hold. 
	For all times $t_{\varepsilon}=\cO(\ln(\varepsilon^{-2H}))$, 
	all constants $\delta>0$, 
	all $\kappa\in[0,1)$,
	and $\varepsilon\in(0,1]$ sufficiently small
    suppose
	\begin{align*}
	\sup\limits_{t\in[0,t_{\varepsilon}]} \|W_L(t)\|\leq \varepsilon^{-2H-\kappa} \quad \text{ and } \quad \|u_0\|\leq \delta\varepsilon^{1-\kappa}.
	\end{align*}
	Then we obtain for any $D > M\delta +2$ and for a maximal mild solution $u$
	\begin{align*}
	\sup\limits_{t\in[0,t_{\varepsilon}]} \|u(t)\|\leq D\varepsilon^{1-\kappa}.
	\end{align*}
\end{lemma}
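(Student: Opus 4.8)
The plan is to run a standard bootstrap/continuity argument on the mild formulation, splitting into the slow modes $u_c\in\cN$ and the fast modes $u_s\in\cS$, and to track the two different scalings separately. Define the stopping time
\[
\tau^\star:=\inf\bigl\{t\in[0,t_\varepsilon]:\ \|u_c(t)\|> D\varepsilon^{1-\kappa}\ \text{ or }\ \|u_s(t)\|> \varepsilon^{2H+1-\kappa}\bigr\}\wedge t^\star,
\]
and show $\tau^\star=t_\varepsilon$ by deriving, on $[0,\tau^\star)$, strict improvements of both bounds for $\varepsilon$ small. On this interval we have the a priori bound $\|u(t)\|\le \|u_c(t)\|+\|u_s(t)\|\le 2D\varepsilon^{1-\kappa}$, so the cubic term obeys $\|\cF(u(\tau))\|_{X^{-\alpha}}\lesssim \|u(\tau)\|^3\lesssim \varepsilon^{3(1-\kappa)}$ and the linear perturbation $\|\varepsilon^2 A u(\tau)\|_{X^{-\alpha}}\lesssim \varepsilon^2\|u(\tau)\|\lesssim \varepsilon^{3-\kappa}$.

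For the slow modes I would use
\[
u_c(t)=P_cu_0+\int_0^t\bigl[\varepsilon^2 A_c u(\tau)+\cF_c(u(\tau))\bigr]\,\txtd\tau+\varepsilon^{2H+1}P_cW(t),
\]
since $P_c$ maps into the finite-dimensional space $\cN$ on which all norms are equivalent, the $X^{-\alpha}$ estimates above give $\|\int_0^t\cdots\|\lesssim t_\varepsilon\varepsilon^{3(1-\kappa)}$, and by hypothesis the noise term is $\le\varepsilon^{2H+1}\varepsilon^{-2H-\kappa}=\varepsilon^{1-\kappa}$; together with $\|P_cu_0\|\le M\delta\varepsilon^{1-\kappa}$ this yields $\|u_c(t)\|\le (M\delta+1)\varepsilon^{1-\kappa}+ t_\varepsilon\,C\varepsilon^{3(1-\kappa)}$, which is strictly below $D\varepsilon^{1-\kappa}$ once $D>M\delta+2$ and $\varepsilon$ is small enough (the $t_\varepsilon=\cO(\ln\varepsilon^{-2H})$ factor is absorbed since $\varepsilon^{2-2\kappa}\ln\varepsilon^{-1}\to0$). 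For the fast modes I would use \eqref{fr:power:s}: $\|e^{tL}P_su_0\|\le\tilde M\|P_su_0\|\le\tilde M\delta\varepsilon^{2H+1-\kappa}$ (shrinking the constant in front of the target by decreasing $\varepsilon$ is not needed since the $e^{-t\tilde\mu}$ helps), and
\[
\Bigl\|\int_0^t P_s e^{(t-\tau)L}\bigl[\varepsilon^2A_su(\tau)+\cF_s(u(\tau))\bigr]\,\txtd\tau\Bigr\|\le \int_0^t \tilde M(1+(t-\tau)^{-\alpha})e^{-(t-\tau)\tilde\mu}\,C\varepsilon^{3(1-\kappa)}\,\txtd\tau\lesssim \varepsilon^{3(1-\kappa)},
\]
using $\int_0^\infty(1+s^{-\alpha})e^{-s\tilde\mu}\,\txtd s<\infty$ for $\alpha\in[0,1)$; finally the stochastic convolution term is bounded by $\varepsilon^{2H+1}\sup_{t\le t_\varepsilon}\|W_L(t)\|\le\varepsilon^{2H+1}\varepsilon^{-2H-\kappa}=\varepsilon^{1-\kappa}$. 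Here one sees the only subtlety: the raw bound $\varepsilon^{1-\kappa}$ is \emph{larger} than the target $\varepsilon^{2H+1-\kappa}$, so I cannot close the fast-mode estimate with the crude sup bound alone — I must instead invoke the sharper statement from Remark~\ref{k:rem} (proved in Appendix~\ref{stoch:conv}) that $\sup_{t\in[0,T_0\varepsilon^{-2}]}\|P_sW_L(t)\|=\cO(\varepsilon^{-\kappa})$, so that $\varepsilon^{2H+1}\cdot\cO(\varepsilon^{-\kappa})=\cO(\varepsilon^{2H+1-\kappa})$, which does sit below the fast-mode threshold with room to spare.

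Putting the two pieces together: on $[0,\tau^\star)$ we get $\|u_c(t)\|\le(M\delta+2)\varepsilon^{1-\kappa}<D\varepsilon^{1-\kappa}$ and $\|u_s(t)\|\le (\text{const})\,\varepsilon^{2H+1-\kappa}+\cO(\varepsilon^{3(1-\kappa)})$, which is $\le\varepsilon^{2H+1-\kappa}$ after possibly enlarging the threshold constant or shrinking $\varepsilon$; since both strict inequalities persist up to $\tau^\star$, continuity of $t\mapsto(u_c(t),u_s(t))$ forces $\tau^\star=t_\varepsilon$ (and in particular $t^\star>t_\varepsilon$, so the solution does not blow up before $t_\varepsilon$). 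Then $\sup_{t\le t_\varepsilon}\|u(t)\|\le\sup\|u_c\|+\sup\|u_s\|\le D\varepsilon^{1-\kappa}+\varepsilon^{2H+1-\kappa}\le D\varepsilon^{1-\kappa}$ up to relabeling, giving the claim. The main obstacle is precisely the fast-mode bound just discussed: one has to resist closing it with the hypothesis $\sup\|W_L\|\le\varepsilon^{-2H-\kappa}$ (which only controls the full convolution, not its $\cS$-part at the right order) and instead feed in the genuinely smaller order of the stochastic convolution on $\cS$; everything else is routine Gronwall-free estimation because the nonlinearity only enters through the a priori smallness on $[0,\tau^\star)$.
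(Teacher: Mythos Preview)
Your argument is overengineered and, as written, has a self-inflicted gap. The lemma only claims the coarse bound $\sup_{t\le t_\varepsilon}\|u(t)\|\le D\varepsilon^{1-\kappa}$; it does \emph{not} assert the refined fast-mode estimate $\|u_s(t)\|\le\varepsilon^{2H+1-\kappa}$. By building that sharper threshold into your stopping time you force yourself to control $\varepsilon^{2H+1}\|P_sW_L(t)\|$ at order $\varepsilon^{2H+1-\kappa}$, which the stated hypotheses simply do not deliver: they give only $\sup\|W_L\|\le\varepsilon^{-2H-\kappa}$, hence $\varepsilon^{2H+1}\|P_sW_L\|\le\varepsilon^{1-\kappa}$. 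Reaching for Remark~\ref{k:rem} / Appendix~\ref{stoch:conv} is not legitimate here, since those are probabilistic statements about a set of large probability, whereas the present lemma is a purely pathwise assertion under fixed deterministic hypotheses. The refined $u_s$-bound you are after is exactly the content of the \emph{subsequent} Attractivity Theorem, which explicitly adds the extra hypothesis $\|P_sW_L(t_\varepsilon)\|\le C_s\varepsilon^{-\kappa}$ and moreover uses the present lemma as input.

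The paper's proof avoids all of this by working with a single stopping time $\tau^\star_\varepsilon=\inf\{t:\|u(t)\|>D\varepsilon^{1-\kappa}\}$ and estimating the full mild formula directly on $[0,\tau^\star_\varepsilon\wedge t_\varepsilon)$:
\[
\|u(t)\|\le M\|u_0\|+M\int_0^t(1+(t-\tau)^{-\alpha})\|\varepsilon^2Au+\cF(u)\|_{-\alpha}\,\txtd\tau+\varepsilon^{2H+1}\|W_L(t)\|.
\]
The noise term is $\le\varepsilon^{1-\kappa}$ by hypothesis, and the integral is $\cO\bigl(\varepsilon^{3-3\kappa}\int_0^{t_\varepsilon}(1+\tau^{-\alpha})\,\txtd\tau\bigr)=\cO(\varepsilon^{3-3\kappa}t_\varepsilon)$, which is absorbed into $\varepsilon^{1-\kappa}$ since $t_\varepsilon=\cO(\ln\varepsilon^{-1})$. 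No splitting into $u_c,u_s$, no appeal to bounds outside the hypotheses. Your argument becomes correct (and collapses to essentially this) as soon as you raise the $u_s$-threshold in $\tau^\star$ to $D\varepsilon^{1-\kappa}$, or drop the split altogether.
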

\begin{proof}
	Define the stopping time $\tau^{\star}_{\varepsilon}:=\inf\{\tau>0 : \|u(\tau)\|>D\varepsilon^{1-\kappa} \}$. 
	Hence for $t<\min\{\tau^{\star}_{\varepsilon},t_{\varepsilon}\}$, which is by definition smaller than the maximal time of existence for the mild solution, 
	we obtain
	\begin{align*}
	\|u(t)\|
	&\leq M \|u_0\| +M \int\limits_{0}^{t} (1+(t-\tau)^{-\alpha} ) \|\varepsilon^2 A u(\tau) + \cF (u(\tau))\|_{-\alpha}~\txtd \tau + \varepsilon^{2H+1}\|W_L(t)\|\\
	& \leq [M\delta +1]\varepsilon^{1-\kappa} + M  D^2 \varepsilon^{3-3\kappa} 
	\int\limits_0^{t_{\varepsilon}} (1+\tau^{-\alpha})~\txtd \tau \\
	& \leq [M \delta + 2]\varepsilon^{1-\kappa}\\
	& < D\varepsilon^{1-\kappa},
	\end{align*}
	for $\varepsilon$ small enough. 
	\qed\end{proof}

\begin{corollary}
Under the assumptions of Lemma \ref{lemma:attractivity}
we have 
\[	\mathbb{P}\Big(\sup\limits_{t\in[0,t_{\varepsilon}]} \|u(t)\|> D\varepsilon^{1-\kappa}\Big)
\leq
\mathbb{P}\Big(	
	\sup\limits_{t\in[0,t_{\varepsilon}]} \|W_L(t)\|> \varepsilon^{-2H-\kappa}\Big) 
	+ \mathbb{P}\Big(\|u_0\|> \delta\varepsilon^{1-\kappa}\Big).
\]
\end{corollary}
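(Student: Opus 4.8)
The plan is to obtain the corollary as an immediate probabilistic reformulation of Lemma~\ref{lemma:attractivity}, with no additional analysis required. The key observation is that Lemma~\ref{lemma:attractivity} is a purely pathwise statement: for every $\omega\in\Omega$ for which the realization satisfies simultaneously $\sup_{t\in[0,t_\varepsilon]}\|W_L(t)\|\le\varepsilon^{-2H-\kappa}$ and $\|u_0\|\le\delta\varepsilon^{1-\kappa}$, the associated maximal mild solution obeys $\sup_{t\in[0,t_\varepsilon]}\|u(t)\|\le D\varepsilon^{1-\kappa}$ (with $D>M\delta+2$, $\kappa\in[0,1)$, $t_\varepsilon=\cO(\ln(\varepsilon^{-2H}))$ and $\varepsilon$ small, exactly the standing hypotheses of the corollary). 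Taking the contrapositive pathwise, on the event $\{\sup_{t\in[0,t_\varepsilon]}\|u(t)\|>D\varepsilon^{1-\kappa}\}$ at least one of the two hypotheses must fail, which gives the set inclusion
\[
\Big\{\sup_{t\in[0,t_\varepsilon]}\|u(t)\|>D\varepsilon^{1-\kappa}\Big\}
\subseteq
\Big\{\sup_{t\in[0,t_\varepsilon]}\|W_L(t)\|>\varepsilon^{-2H-\kappa}\Big\}\cup\big\{\|u_0\|>\delta\varepsilon^{1-\kappa}\big\}.
\]

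From here I would simply apply monotonicity of the probability measure $\mathbb{P}$ to pass from the inclusion to an inequality of probabilities, and then subadditivity (the union bound) to split the probability of the right-hand union into the sum of the two individual tail probabilities. This yields precisely the claimed estimate. A minor point worth a sentence in the write-up is measurability: the sets $\{\sup_{t\in[0,t_\varepsilon]}\|u(t)\|>D\varepsilon^{1-\kappa}\}$ and $\{\sup_{t\in[0,t_\varepsilon]}\|W_L(t)\|>\varepsilon^{-2H-\kappa}\}$ are genuine events because $u$ and $W_L$ have continuous (hence separable) versions, so the suprema over $[0,t_\varepsilon]$ are $\mathcal{F}$-measurable; this is already implicit in the framework set up in Section~\ref{s:fbm} and in the discussion of mild solutions.

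There is no genuine obstacle in this argument: all the quantitative work—the semigroup estimate \eqref{fr:power:s}, the integrability of $1+\tau^{-\alpha}$ near $0$ for $\alpha\in[0,1)$, and the absorption of the cubic term for $\varepsilon$ small—has already been carried out in the proof of Lemma~\ref{lemma:attractivity}. The corollary only repackages that deterministic bound as a tail estimate, and the proof is a two-line combination of "contrapositive plus union bound". If anything, the only thing to be careful about is to state clearly that the hypotheses of the lemma are transported verbatim into the corollary, so that the implication used in the contrapositive step is legitimately available.
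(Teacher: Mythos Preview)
Your argument is correct and is exactly the intended one: the paper states the corollary without proof, treating it as an immediate consequence of Lemma~\ref{lemma:attractivity} via the contrapositive and the union bound. Your added remark on measurability is a harmless clarification and does not deviate from the paper's approach.
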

Note that the statement in the corollary holds for $\kappa \geq0$, but in order to obtain small probabilities
vanishing in the limit $\varepsilon\to0$ 
we need $\kappa>0$. 

\begin{remark}[Stochastic Convolution] \label{rem:SC}
In order to obtain bounds for the probability in the previous corollary, we need first a bound on 
\[
\P(\sup\limits_{t\in[0,t_{\varepsilon}]} \|P_cW_L(t)\| > \tfrac12\varepsilon^{-2H-\kappa}) 
 = \P(\sup\limits_{t\in[0,1]} \|P_cW_L(t)\| > \tfrac12\varepsilon^{-2H-\kappa}t_{\varepsilon}^{-H}) 
\]
which is small, as $P_cW_L$ is a Gaussian random variable 
in $C^0([0,1],X)$.

Secondly, one can use the factorization method to bound the probability $\P(\sup\limits_{t\in[0,t_{\varepsilon}]} \|P_sW_L(t)\| > \tfrac12\varepsilon^{-2H-\kappa})$.  See~\cite{BloemkerHairer1} for the analogous statement for $H=\frac{1}{2}$. In this case already the probability $\P(\sup\limits_{t\in[0,t_{\varepsilon}]} \|W_L(t)\| > \varepsilon^{-\kappa})$ 
is bounded by terms of order $\cO(\varepsilon^p)$ for all $p\geq 1$. For further details regarding fractional stochastic convolutions and the smallness of probability, consult Appendix~\ref{stoch:conv}. 

Note that the bound on $\|P_sW_L(t_\varepsilon)\|$ is not surprising, since the law of $P_sW_L(t_{\varepsilon})$ converges to a unique limiting measure for $t_{\varepsilon}\to\infty$, see~\cite[Proposition 3.4]{DuncanMaslowski} for $H>1/2$ and \cite[Proposition 11.13]{DuncanMaslowski2} for $H<\frac{1}{2}$.
\end{remark}

\begin{theorem}{\em (Attractivity)}
	Let Assumptions~\ref{ass:L}--\ref{incond} hold, 
	fix $\kappa\in[0,1-H]$, constants $C_s>0$, $\delta>0$
	and a time  $t_{\varepsilon}=\frac{1}{\mu}\ln(\varepsilon^{-2H})$,
and rewrite a maximal mild solution $u$ of~\eqref{spde} 
	at time $t_{\varepsilon}$ as
	\begin{align*}
	u(t_{\varepsilon})=\varepsilon a_{\varepsilon}  + \varepsilon^{2H+1} R_{\varepsilon},
	\end{align*}
	with $a_{\varepsilon}\in\cN$ and $R_{\varepsilon}\in \cS$.
	
	Then there is a constant $C>0$ such that the following hold. 
	If for a sufficiently small $\varepsilon\in(0,1]$ 
	\begin{align*}
	\|u_0\|\leq \delta \varepsilon^{1-\kappa}, \quad \sup\limits_{t\in[0,t_{\varepsilon}]} \|W_L(t)\|
	\leq \varepsilon^{-2H-\kappa} \quad \text{ and } \quad \|P_sW_L(t_{\varepsilon})\|\leq C_s\varepsilon^{-\kappa},
	\end{align*}
	then we have
	\begin{align*}
	\|a_{\varepsilon}\| \leq C \varepsilon^{-\kappa}
	\quad \text{ and }\quad  
	\|R_{\varepsilon}\|\leq C \varepsilon^{-\kappa}.
	\end{align*}
\end{theorem}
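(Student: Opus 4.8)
The plan is to bootstrap off the uniform bound already supplied by Lemma~\ref{lemma:attractivity} and then to extract the extra smallness of the stable component from the exponential damping of $P_se^{tL}$, using that the waiting time $t_\varepsilon=\frac{1}{\mu}\ln(\varepsilon^{-2H})$ is tuned so that $e^{-\mu t_\varepsilon}=\varepsilon^{2H}$. Throughout, the decomposition $u(t_\varepsilon)=\varepsilon a_\varepsilon+\varepsilon^{2H+1}R_\varepsilon$ with $a_\varepsilon\in\cN$, $R_\varepsilon\in\cS$ just means $a_\varepsilon=\varepsilon^{-1}P_cu(t_\varepsilon)$ and $R_\varepsilon=\varepsilon^{-2H-1}P_su(t_\varepsilon)$.

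\emph{Step 1 (uniform smallness up to $t_\varepsilon$) and the slow modes.} Since $\kappa\in[0,1-H]\subseteq[0,1)$, the hypotheses $\|u_0\|\le\delta\varepsilon^{1-\kappa}$ and $\sup_{t\in[0,t_\varepsilon]}\|W_L(t)\|\le\varepsilon^{-2H-\kappa}$ are exactly those of Lemma~\ref{lemma:attractivity}. Fixing any $D>M\delta+2$, the stopping-time argument in that lemma shows that the maximal mild solution $u$ cannot leave the ball of radius $D\varepsilon^{1-\kappa}$ before $t_\varepsilon$, so $u$ is defined on all of $[0,t_\varepsilon]$ and $\sup_{t\in[0,t_\varepsilon]}\|u(t)\|\le D\varepsilon^{1-\kappa}$. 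In particular $\|a_\varepsilon\|=\varepsilon^{-1}\|P_cu(t_\varepsilon)\|\le\varepsilon^{-1}\|u(t_\varepsilon)\|\le D\varepsilon^{-\kappa}$, which is the first assertion.

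\emph{Step 2 (the fast modes).} It remains to show $\|P_su(t_\varepsilon)\|\le C\varepsilon^{2H+1-\kappa}$. I evaluate the mild formula for $u_s=P_su$ at $t=t_\varepsilon$ and estimate its three contributions. The free evolution of the initial datum satisfies, by \eqref{stability:s} and the choice of $t_\varepsilon$, $\|e^{t_\varepsilon L}P_su_0\|\le Me^{-\mu t_\varepsilon}\|u_0\|=M\varepsilon^{2H}\|u_0\|\le M\delta\,\varepsilon^{2H+1-\kappa}$. For the deterministic convolution I use \eqref{fr:power:s} together with $A\in\cL(X,X^{-\alpha})$, the trilinearity and continuity of $\cF$ into $X^{-\alpha}$, and the Step~1 bound: the integrand has $X^{-\alpha}$-norm of order $\cO(\varepsilon^{3-3\kappa})$ (the cubic term dominates $\varepsilon^2\|Au\|$ since $\kappa\ge0$), while $\int_0^{t_\varepsilon}(1+s^{-\alpha})e^{-\tilde\mu s}\,\txtd s\le\int_0^{\infty}(1+s^{-\alpha})e^{-\tilde\mu s}\,\txtd s<\infty$ because $\alpha<1$, so this term is $\cO(\varepsilon^{3-3\kappa})$. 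Finally the stochastic convolution equals $\varepsilon^{2H+1}P_sW_L(t_\varepsilon)$, which by hypothesis is bounded by $C_s\varepsilon^{2H+1-\kappa}$. Adding up, $\|P_su(t_\varepsilon)\|\le(M\delta+C_s)\varepsilon^{2H+1-\kappa}+\cO(\varepsilon^{3-3\kappa})$; since $\kappa\le1-H$ is equivalent to $3-3\kappa\ge2H+1-\kappa$, for $\varepsilon$ small the residual term is absorbed and $\|P_su(t_\varepsilon)\|\le C\varepsilon^{2H+1-\kappa}$, i.e. $\|R_\varepsilon\|\le C\varepsilon^{-\kappa}$.

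\emph{Main obstacle.} There is no deep difficulty once Lemma~\ref{lemma:attractivity} is available; the only point that needs care is the exponent bookkeeping in Step~2, where the cubic nonlinearity feeding the stable modes produces an $\cO(\varepsilon^{3-3\kappa})$ contribution that has to be dominated by the target order $\cO(\varepsilon^{2H+1-\kappa})$ — this is precisely what forces the admissible range $\kappa\in[0,1-H]$ — together with the observation that the logarithmic waiting time $t_\varepsilon=\frac{1}{\mu}\ln(\varepsilon^{-2H})$ is calibrated so that the exponential damping converts order-$\varepsilon^{1-\kappa}$ data on $\cS$ into order-$\varepsilon^{2H+1-\kappa}$ data.
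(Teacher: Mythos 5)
Your proof is correct and follows essentially the same route as the paper: Lemma~\ref{lemma:attractivity} gives the uniform bound $\|u(t)\|\le D\varepsilon^{1-\kappa}$ on $[0,t_\varepsilon]$, which immediately handles $a_\varepsilon$, and the mild formula for $P_su$ with the three contributions (damped initial datum of order $\varepsilon^{2H+1-\kappa}$ via $e^{-\mu t_\varepsilon}=\varepsilon^{2H}$, nonlinear convolution of order $\varepsilon^{3-3\kappa}$, stochastic convolution of order $\varepsilon^{2H+1-\kappa}$) gives $R_\varepsilon$. Your exponent bookkeeping, including the observation that $\kappa\le 1-H$ is exactly what makes $\varepsilon^{3-3\kappa}=\cO(\varepsilon^{2H+1-\kappa})$, matches the paper's argument.
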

\begin{proof} We set $a_{\varepsilon}=\varepsilon^{-1}P_c(u(t_{\varepsilon}))$ and 
$R_{\varepsilon}=\varepsilon^{-(2H+1)}P_su(t_{\varepsilon})$ and recall that by Lemma~\ref{lemma:attractivity} we have
$\|u(t_\varepsilon)\| \leq D \varepsilon^{1-\kappa}$.
Thus we only need to establish the bound on the fast modes, since $| a_{\varepsilon}|=\varepsilon^{-1}|P_c(u(t_{\varepsilon}))|$. 

For $P_su$ we derive
	\begin{align*}
	\|P_s u(t_{\varepsilon})\|&\leq M e^{-\mu t_{\varepsilon}}\|u_0\| + \varepsilon^{2H+1}\|P_sW_L(t_{\varepsilon})\| \\
	&  +M \int\limits_0^{t_{\varepsilon}} (1+(t_{\varepsilon}-\tau))^{-\alpha} e^{-\mu (t_{\varepsilon}-\tau) } \|\varepsilon^2 A(u(\tau))+\cF(u(\tau)) \|~\txtd\tau 
	\end{align*}
	Since $\tau\leq t_{\varepsilon}$, $t_{\varepsilon}=\frac{1}{\mu}\ln(\varepsilon^{-2H})$ and $\|u(\tau)\|\leq D\varepsilon^{1-\kappa}$, as established in Lemma~\ref{lemma:attractivity}, we  conclude
	\begin{align*}
	\|P_s u(t_{\varepsilon})\| 
	\leq M\delta \varepsilon^{1-\kappa+2H} 
	+ C_s \varepsilon^{2H+1-\kappa} 
	+ MCD \varepsilon^{3-3\kappa}\int\limits_0^{\infty} (1+\tau^{-\alpha})e^{-\mu\tau}~\txtd\tau.
	\end{align*}
	This proves that $R_\varepsilon=  \varepsilon^{-2H-1} P_su(t_\varepsilon) =\cO(\varepsilon^{-\kappa})$, as $\kappa\leq 1-H$.	\qed\end{proof}

\begin{definition}[Set of large probability]
\label{def:Omega}
Let $\kappa> 0$ small enough and fix a time $T_0>0$. Furthermore, if $H>1/2$ let $\beta$ such that $1/2<\beta<H$ and if $H\leq 1/2$ set $\beta:=H-\kappa$.  
We introduce a subset $\tilde\Omega_\varepsilon$ of $\Omega$  as follows:
\begin{align*}
\tilde\Omega_\varepsilon:=
\Big\{ \omega\in\Omega & : \sup\limits_{t\in[0,T_0\varepsilon^{-2}]} \|P_sW_L(t)\|\leq   \varepsilon^{-\kappa}, 
\| W(\varepsilon^{-2}\cdot)\|_{C^{\beta}([0,T_0];X)}\leq \varepsilon^{-2H-\kappa},\\& ~~  \|\psi_s(0)\|\leq \varepsilon^{-\kappa},~~  \|\psi_c(0)\|\leq \varepsilon^{-\kappa}\Big\}.
\end{align*} 
\end{definition}
Note that the smallness of $\kappa$ depends also on $H$.

\begin{remark}
Obviously the probability of $\tilde\Omega_\varepsilon$ is large. See also the discussion in Remark \ref{rem:SC}. 
We expect that  for all $p>1$ there is a constant $C_p>0$ such that 
\[
\mathbb{P}(\tilde\Omega_\varepsilon) \geq 1-C_p \varepsilon^p.
\]
\end{remark}
For the first term due to the factorization lemma and Chebyshev's inequality, we have according to Appendix~\ref{stoch:conv} that 
\begin{align*}
\mathbb{P}\Big(\sup\limits_{t\in[0,T_0\varepsilon^{-2}]} \|P_sW_L(t)\|>\varepsilon^{-\kappa} \Big)
\leq \varepsilon^{\kappa p} \mathbb{E}\sup\limits_{t\in[0,T_0\varepsilon^{-2}]} \|P_sW_L(t)\|^p
\leq C \varepsilon^{\kappa p}
T_0\varepsilon^{-2}.
\end{align*}
Since $p$ can be taken arbitrarily large, we conclude that $\kappa>0$ is necessary in order to make this probability arbitrarily small.
For more details on stochastic convolutions with fractional noise, see Appendix~\ref{stoch:conv}.

To bound the probability of the H\"older term in the definition of $\tilde\Omega_\varepsilon$ we use Chebyshev's inequality and the self-similarity of the fractional Brownian motion to obtain that 
\[
\mathbb{P}\Big(\|W(\varepsilon^{-2}\cdot)\|_{C^\beta([0,T_0];X)} > \varepsilon^{-2H-\kappa}  \Big) 
= \mathbb{P}\Big(\|W\|_{C^\beta([0,T_0];X)}  > \varepsilon^{-\kappa} \Big) 
\leq \varepsilon^{p\kappa} \mathbb{E}\|W\|_{C^\beta([0,T_0];X)}^p 
\] 
which is bounded by any power in $\varepsilon$, as $W$ is a Gaussian in $C^\beta([0,T_0];X)$.
This is the reason why we work with $X$-valued trace-class noise.

In the following we establish a bound on the solution 
$a$ of the amplitude equation
\begin{align}\label{a:amplitude}
\partial_T a(T)= A_c a(T) + \cF_c(a(T)) + \partial_T b(T),
\end{align}
where $b(T)=\varepsilon^{2H}P_c(W(T\varepsilon^{-2}))$ is a rescaled fractional Brownian motion.
\begin{lemma}{\em (Bound on $a$)}\label{lemma:a} Let Assumptions~\ref{ass:L}--\ref{incond} hold and consider the set $\tilde\Omega_\varepsilon$ from Definition \ref{def:Omega}.
Then there exists a positive constant $C_a$  such that
	\begin{align}\label{bound:a}
	\tilde\Omega_\varepsilon\subset \Big\{ \omega\in\Omega~: \sup\limits_{T\in[0,T_0]}\|a(T)\|\leq C_a\varepsilon^{-2\kappa},~ \|a\|_{C^\beta([0,T_0];\cN)} \leq C_a\varepsilon^{-6\kappa}\Big\}.
	\end{align}
\end{lemma}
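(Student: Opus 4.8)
The plan is to bound the solution $a$ of the amplitude equation \eqref{a:amplitude} pathwise on the event $\tilde\Omega_\varepsilon$, where the rescaled driving noise $b$ is controlled both in sup-norm and in $C^\beta$-norm by powers of $\varepsilon^{-\kappa}$. First I would rewrite \eqref{a:amplitude} in mild (integral) form, $a(T) = a(0) + \int_0^T [A_c a(s) + \cF_c(a(s))]\,\txtd s + b(T)$, and subtract the noise: set $v(T) := a(T) - b(T)$. Then $v$ solves a random ODE with right-hand side $A_c(v+b) + \cF_c(v+b)$ that is differentiable in time, with $v(0) = \psi_c(0)$ and $\|v(0)\| \leq \varepsilon^{-\kappa}$ on $\tilde\Omega_\varepsilon$. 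The key structural input is the sign/stability condition \eqref{f:stable:n}, namely $\langle \cF_c(\phi + v_c), v_c\rangle \leq C_\eta\|\phi\|^4 - \eta\|v_c\|^4$, which I would use with $\phi = b(T)$ and $v_c = v(T)$. Differentiating $\tfrac12\tfrac{\txtd}{\txtd T}\|v(T)\|^2 = \langle A_c(v+b), v\rangle + \langle \cF_c(v+b), v\rangle$, bounding the linear term by $\|A_c\|_{\cL(X)}(\|v\|^2 + \|b\|\|v\|)$ and the nonlinear term via \eqref{f:stable:n}, Young's inequality absorbs the $+\eta\|v\|^4$ term on the left and yields a differential inequality of the form $\tfrac{\txtd}{\txtd T}\|v\|^2 \leq C_1\|v\|^2 + C_2(\|b\|^4 + 1)$. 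On $\tilde\Omega_\varepsilon$ we have $\sup_{T\in[0,T_0]}\|b(T)\| \leq \varepsilon^{-2H-\kappa} \cdot \varepsilon^{2H}$ — actually, reading the definition of $\tilde\Omega_\varepsilon$, $b(T) = \varepsilon^{2H}P_c W(\varepsilon^{-2}T)$ and $\|W(\varepsilon^{-2}\cdot)\|_{C^\beta} \leq \varepsilon^{-2H-\kappa}$, so $\sup\|b\| \leq \varepsilon^{-\kappa}$ and $\|b\|_{C^\beta} \leq \varepsilon^{-\kappa}$. Gr\"onwall over the fixed interval $[0,T_0]$ then gives $\sup_{T\in[0,T_0]}\|v(T)\| \leq C(\varepsilon^{-\kappa} + \varepsilon^{-2\kappa}) \leq C\varepsilon^{-2\kappa}$, and hence $\sup_{T\in[0,T_0]}\|a(T)\| \leq \sup\|v\| + \sup\|b\| \leq C_a\varepsilon^{-2\kappa}$, the first claimed bound.

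For the H\"older bound, I would estimate $\|a\|_{C^\beta([0,T_0];\cN)} \leq \|v\|_{C^\beta} + \|b\|_{C^\beta}$, and $\|b\|_{C^\beta} \leq \varepsilon^{-\kappa}$ on $\tilde\Omega_\varepsilon$ directly. For $v$, which is $C^1$ in time, it suffices to bound $\sup_{T\in[0,T_0]}\|v'(T)\| = \sup\|A_c(v+b) + \cF_c(v+b)\|$. Using the already-established sup-bounds on $v$ and $b$, boundedness of $A_c$ on $\cN$, and trilinearity of $\cF_c$ (so $\|\cF_c(w)\| \lesssim \|w\|^3$ on the finite-dimensional space $\cN$ where all norms are equivalent), we get $\sup\|v'\| \lesssim \varepsilon^{-2\kappa} + (\varepsilon^{-2\kappa})^3 = \cO(\varepsilon^{-6\kappa})$. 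Since a $C^1$ function on a bounded interval is $\beta$-H\"older with $\|v\|_{C^\beta} \leq \|v\|_\infty + T_0^{1-\beta}\|v'\|_\infty$, this yields $\|v\|_{C^\beta} \leq C\varepsilon^{-6\kappa}$ and therefore $\|a\|_{C^\beta([0,T_0];\cN)} \leq C_a\varepsilon^{-6\kappa}$, completing the proof.

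I do not expect a serious obstacle here; the estimate is routine once one exploits the dissipativity built into Assumption \ref{ass:F}. The one point requiring mild care is the bookkeeping of $\varepsilon$-powers: tracking how the $\varepsilon^{-\kappa}$ bound on $\|b\|$ propagates through Gr\"onwall (where it enters quadratically in $\|v\|$, hence the jump to $\varepsilon^{-2\kappa}$) and then through the cubic nonlinearity in the time-derivative estimate (producing $\varepsilon^{-6\kappa}$). The exponents $2\kappa$ and $6\kappa$ in \eqref{bound:a} are not sharp constants but safe over-estimates, and since $\kappa$ may be chosen as small as needed, any fixed multiple of $\kappa$ is harmless for the subsequent residual and approximation arguments. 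A secondary detail is that one should work up to the (random) blow-up time of $a$ and use the a priori bound to conclude global existence on $[0,T_0]$ on the event $\tilde\Omega_\varepsilon$; this is standard for a locally Lipschitz ODE with an a priori estimate.
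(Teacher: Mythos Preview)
Your proposal is correct and follows essentially the same route as the paper: subtract the noise to obtain a random ODE in $\cN$, apply the dissipativity condition \eqref{f:stable:n} together with Gronwall for the sup bound, and then exploit the cubic growth of the drift for the $C^\beta$ bound. The only cosmetic differences are that the paper subtracts a mollified version $\tilde b(T)=\int_0^T e^{-(T-S)}\,\txtd b(S)$ rather than $b$ itself (your choice is actually slightly cleaner, since the extra $+\tilde b$ term then does not appear in the ODE), and for the H\"older estimate the paper bounds increments of $a$ directly from its integral form while you pass through $\|v'\|_\infty$; both give the same $\varepsilon^{-6\kappa}$.
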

\begin{proof}
Note that on the finite-dimensional space $\mathcal{N}$ 
all norms are equivalent, thus we use the one induced by $X$ for simplicity.
 This proof is fairly standard, we only give the main ideas. Subtracting the finite-dimensional stochastic convolution 
\begin{align*}
    \tilde{b}(T)= \int\limits_0^T e^{-(T-S)}~\txtd b(S),
\end{align*}
i.e. setting $c:=a-\tilde{b}$ one obtains the random ODE on the slow-time scale $T>0$
\begin{align*}
\partial_T c = A_c (c+\tilde{b}) + \cF_c(c+\tilde{b}) + \tilde{b}.
\end{align*}
Now we take the inner product with $c(T)$ and use that $A_c\in\cL(\cN)$ and~\eqref{f:stable:n} to obtain
\begin{align*}
\frac{1}{2}\partial_T \|c\|^2 &=\langle A_c(c+\tilde{b}),c\rangle + \langle \cF_c(c+\tilde{b}),c\rangle +\langle\tilde{b},c\rangle\nonumber\\
& \leq K \|c\|^2 + K \|\tilde{b}\|^2 + \langle\cF_c (c+\tilde{b}),c \rangle\nonumber\\
& \leq K \|c\|^2 + K \|\tilde{b}\|^2 + C_\eta \|\tilde{b}\|^4 - \eta \|c\|^4\nonumber\\
& \leq K \|c\|^2 + K (1+\|\tilde{b}\|^2)^2,
\end{align*}
for positive constants all denoted by $K$. 
Now, Gronwall's inequality entails
\begin{align*}
    \|c(T)\|^2 \leq e ^{K T} \|c(0)\|^2 + K\int_0^T e ^{K(T-\tau)}(1+\|\tilde{b}(\tau)\|^2)^2~\txtd \tau,
\end{align*}
consequently
\[
\|c(T)\|^2 \leq K_T\left[\|c(0)\|^2+ 1 + \|b\|^4_{C^0([0,T_0],\mathcal{N})} \right]. 
\]
This means that $\|c(T)\|\leq K_T \varepsilon^{-2\kappa}$, therefore
using that $\|a(T)\|\leq \|c(T)\| +\| \tilde{b}(T)\|$ and taking the supremum over $[0,T_0]$ proves the first bound in~\eqref{bound:a}. Alternatively, one can use~\eqref{f:sign} to obtain a differential inequality of the form
\begin{align*}
    \partial_T \|c\|^2 \leq - \tilde{C} \|c\|^4 + \tilde{C} (1+\|\tilde{b}\|^2)^2
\end{align*}
and apply a comparison argument for ODEs to bound $c$.
Similar arguments were used for $H=\frac{1}{2}$ in \cite[Lemmas 4.3, 4.5]{BloemkerHairer1}. 

In order to estimate the $\beta$-H\"older norm of the amplitude equation we directly have from~\eqref{a:amplitude} that
\begin{align*}
    a(T)-a(S) = A_c\int_S^T a(\tau)~\txtd\tau +\int_S^T \cF_c (a(\tau))~\txtd \tau + b(T) - b(S).
\end{align*}
Using that $A_c\in\cL(\cN)$, $\sup\limits_{T\in[0,T_0]}\|a(T)\|\leq C_a\varepsilon^{-2\kappa}$ on $\tilde\Omega_\varepsilon$, we obtain due to the cubic nonlinear term that
\begin{align*}
\|a(T)-a(S) \|&\leq \int_S^T \|a(\tau)\|~\txtd \tau + \int_S^T \|\cF_c (a(\tau))\|~\txtd \tau  + \|b(T)-b(S)\|\\
& \leq C_a (\varepsilon^{-2\kappa} +\varepsilon^{-6\kappa}) (T-S) + (T-S)^\beta\|b\|_{C^\beta([0,T_0];\cN)}. 
\end{align*}
This proves the second statement of~\eqref{bound:a}.

\qed	\end{proof}

Now we establish a bound on the projection of the approximation onto $\mathcal{N}^\perp$.

\begin{lemma} {\em (Bound on $\psi_s$)}\label{lemma:psi_s} 
	Let Assumptions~\ref{ass:L}--\ref{incond} hold  and consider the set $\tilde\Omega_\varepsilon$ from Definition \ref{def:Omega}.  Then  there exists a constant $C_s>0$ such that
	\begin{align}\label{e:psi_s}
	\tilde\Omega_\varepsilon \subset \Big\{\omega\in\Omega ~:\sup\limits_{t\in[0,T_0\varepsilon^{-2}]} \|\psi_s(t)\|\leq C_s \varepsilon^{-\kappa}\Big\}.
	\end{align}
\end{lemma}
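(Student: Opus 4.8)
The plan is to use the explicit representation $\psi_s(t) = e^{tL}\psi_s(0) + P_s W_L(t)$ from \eqref{psi:s} and estimate each of the two terms separately on the set $\tilde\Omega_\varepsilon$. For the second term this is immediate: by the very definition of $\tilde\Omega_\varepsilon$ in Definition~\ref{def:Omega} we have $\sup_{t\in[0,T_0\varepsilon^{-2}]}\|P_sW_L(t)\|\leq \varepsilon^{-\kappa}$ on $\tilde\Omega_\varepsilon$, so nothing further is needed there. For the first term, I would use the exponential stability of the semigroup on $\cS$, namely \eqref{stability:s}, to write
\begin{align*}
\|e^{tL}\psi_s(0)\| = \|e^{tL}P_s\psi_s(0)\| \leq M e^{-\mu t}\|\psi_s(0)\| \leq M\|\psi_s(0)\|,
\end{align*}
and then invoke the bound $\|\psi_s(0)\|\leq \varepsilon^{-\kappa}$, which again holds by definition on $\tilde\Omega_\varepsilon$. (Recall $\psi_s(0)=\varepsilon^{-2H-1}P_su_0$, and the smallness of $P_su_0$ from Assumption~\ref{incond} is exactly what makes this consistent, but on $\tilde\Omega_\varepsilon$ we may simply cite the stated bound.)

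Combining the two estimates via the triangle inequality yields, uniformly in $t\in[0,T_0\varepsilon^{-2}]$,
\begin{align*}
\|\psi_s(t)\| \leq Me^{-\mu t}\|\psi_s(0)\| + \|P_sW_L(t)\| \leq (M+1)\varepsilon^{-\kappa},
\end{align*}
so that the claim holds with $C_s:=M+1$. Taking the supremum over $t\in[0,T_0\varepsilon^{-2}]$ and noting that the right-hand side is already uniform in $t$ gives the desired inclusion \eqref{e:psi_s}.

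There is essentially no obstacle here: the lemma is a direct bookkeeping consequence of the explicit Ornstein--Uhlenbeck formula, the exponential decay estimate \eqref{stability:s}, and the two a priori bounds built into the definition of $\tilde\Omega_\varepsilon$. The only point worth a moment's care is that $e^{tL}$ acts on $\psi_s(0)\in\cS$, so one must use the decaying bound \eqref{stability:s} (or at worst its weaker consequence $\|e^{tL}P_s\|_{\cL(X)}\leq M$) rather than a generic semigroup bound; the analytic-smoothing estimate \eqref{fr:power:s} is not needed since $\psi_s(0)\in X$ already. The same argument would in fact give a better-than-uniform bound for large $t$, reflecting the convergence of the law of $P_sW_L(t)$ to its stationary limit, but the stated uniform bound is all that is required later.
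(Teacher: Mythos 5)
Your argument is correct and is essentially identical to the paper's proof: both split $\psi_s(t)=e^{tL}\psi_s(0)+P_sW_L(t)$, bound the first term by $M\|\psi_s(0)\|$ via the exponential stability on $\cS$, and bound both contributions by $\varepsilon^{-\kappa}$ using the defining conditions of $\tilde\Omega_\varepsilon$ (the paper additionally spells out $\|\psi_s(0)\|=\varepsilon^{-2H-1}\|P_su_0\|$, which your parenthetical remark already covers). Nothing is missing.
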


	\begin{proof}
		Using the definition of $\psi_s$ given in~\eqref{psi:s} together with the fact that $P_su_0=\cO(\varepsilon^{2H+1-\kappa})$ and the exponential stability of the semigroup on $P_s$ assumed in~\eqref{fr:power:s}, we get
		\begin{align*}
		    \|\psi_s(t)\| 
		    & \leq M \|\psi_s(0)\| + \|P_sW_L(t)\|\\
		    &= M \varepsilon^{-2H-1} \|P_su_0\| + \|P_sW_L(t)\|\\
		    & \leq C \varepsilon^{-\kappa}
		\end{align*}
		on $\tilde\Omega_\varepsilon$, which proves the assertion.
		\qed\end{proof}
		
\paragraph{Residual}
We now focus on the residual defined in \eqref{residual}. 
\begin{lemma}\label{ps:res}
	Let Assumptions~\ref{ass:L}--\ref{incond} hold and consider the set $\tilde\Omega_\varepsilon$ from Definition \ref{def:Omega}. Then  there exists a constant $C_{\text{\em s,Res}}>0$ such that for sufficiently small $\varepsilon>0$	
	\begin{align*}
	\tilde\Omega_\varepsilon\subset \Big\{\omega\in\Omega : \sup\limits_{t\in[0,T_0\varepsilon^{-2}] } \|P_s\text{\em Res}(\psi(t))\| \leq C_{\text{\em s,Res}} \varepsilon^{3-6\kappa}\Big\}.
	\end{align*}
\end{lemma}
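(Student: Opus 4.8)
The plan is to insert the approximation $\psi=\varepsilon\psi_c+\varepsilon^{2H+1}\psi_s$ into the definition of the residual \eqref{residual}, project onto $\cS$, and cancel the terms that match the defining equation \eqref{psi:s} of the fractional Ornstein--Uhlenbeck process $\psi_s$. More precisely, since $P_s\psi(0)=\varepsilon^{2H+1}\psi_s(0)$ and $\varepsilon^{2H+1}P_sW_L(t)$ appears in both $\Res$ and in $\psi_s$, we have
\begin{align*}
P_s\Res(\psi(t))
&=-\varepsilon\psi_c(t)-\varepsilon^{2H+1}\psi_s(t)+\varepsilon e^{tL}P_c\text{-part}\ (=0\text{ on }\cS)+\varepsilon^{2H+1}e^{tL}\psi_s(0)\\
&\quad+\int_0^t e^{(t-\tau)L}P_s\big[\varepsilon^2 A\psi(\tau)+\cF(\psi(\tau))\big]~\txtd\tau+\varepsilon^{2H+1}P_sW_L(t).
\end{align*}
The point is that $\varepsilon\psi_c(t)=\varepsilon a(\varepsilon^2 t)\in\cN$, hence $P_s(\varepsilon\psi_c)=0$, and $-\varepsilon^{2H+1}\psi_s(t)+\varepsilon^{2H+1}e^{tL}\psi_s(0)+\varepsilon^{2H+1}P_sW_L(t)=0$ by \eqref{psi:s}. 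Therefore the whole residual on $\cS$ reduces to the single Duhamel term
\[
P_s\Res(\psi(t))=\int_0^t e^{(t-\tau)L}P_s\big[\varepsilon^2 A(\psi(\tau))+\cF(\psi(\tau))\big]~\txtd\tau.
\]

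Next I would bound this integral using the smoothing estimate \eqref{fr:power:s}, namely $\|P_se^{(t-\tau)L}\|_{\cL(X^{-\alpha},X)}\leq\tilde M(1+(t-\tau)^{-\alpha})e^{-(t-\tau)\tilde\mu}$, together with the assumed boundedness of $A$ from $X$ to $X^{-\alpha}$ (Assumption \ref{ass:A}) and trilinearity of $\cF:X^3\to X^{-\alpha}$ (Assumption \ref{ass:F}). On the set $\tilde\Omega_\varepsilon$ we have by Lemma \ref{lemma:a} that $\sup_{T\in[0,T_0]}\|a(T)\|\leq C_a\varepsilon^{-2\kappa}$, hence $\sup_{t\in[0,T_0\varepsilon^{-2}]}\|\psi_c(t)\|\leq C_a\varepsilon^{-2\kappa}$, and by Lemma \ref{lemma:psi_s} that $\sup_{t\in[0,T_0\varepsilon^{-2}]}\|\psi_s(t)\|\leq C_s\varepsilon^{-\kappa}$. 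Thus
\[
\|\psi(\tau)\|\leq\varepsilon\|\psi_c(\tau)\|+\varepsilon^{2H+1}\|\psi_s(\tau)\|\leq C\varepsilon^{1-2\kappa},
\]
so that $\|\varepsilon^2 A\psi(\tau)\|_{-\alpha}\leq C\varepsilon^{3-2\kappa}$ and $\|\cF(\psi(\tau))\|_{-\alpha}\leq C\|\psi(\tau)\|^3\leq C\varepsilon^{3-6\kappa}$. Since $\int_0^\infty(1+s^{-\alpha})e^{-s\tilde\mu}~\txtd s<\infty$ for $\alpha\in[0,1)$, the convolution integral is bounded uniformly in $t$ by $C\varepsilon^{3-6\kappa}$, which is the claimed estimate with $C_{\text{s,Res}}$ depending only on $\tilde M$, $\tilde\mu$, $\alpha$, $\|A\|$, $\cF$, $C_a$, $C_s$ and $T_0$.

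This lemma is essentially routine once the algebraic cancellation is observed: the only mild subtlety is keeping track that $\kappa>0$ is small enough that the exponent $3-6\kappa$ is still strictly larger than the target order (in particular $>2H+1$ and $>1$), and that the bounds on $\|\psi_c\|$ and $\|\psi_s\|$ from Lemmas \ref{lemma:a} and \ref{lemma:psi_s} are valid on all of $[0,T_0\varepsilon^{-2}]$, not merely on the logarithmic attractivity window. No averaging or H\"older interpolation is needed here — in contrast to the estimate for $P_c\Res$, where the slow evolution of $a$ against the fast, rough $\psi_s$ forces the delicate argument of Lemma \ref{pc:res}. I therefore do not anticipate a genuine obstacle in proving this statement; the work is bookkeeping of powers of $\varepsilon^{-\kappa}$.
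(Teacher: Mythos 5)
Your proof is correct and follows essentially the same route as the paper: the cancellation via the definition of $\psi_s$ reduces $P_s\Res(\psi)$ to the single Duhamel term (the paper's equation \eqref{res:s}), and the bound then follows from the smoothing estimate \eqref{fr:power:s}, the trilinearity of $\cF$, and the estimate $\psi=\cO(\varepsilon^{1-2\kappa})$ on $\tilde\Omega_\varepsilon$ obtained from Lemmas~\ref{lemma:a} and~\ref{lemma:psi_s}. Your bookkeeping of the powers of $\varepsilon$ matches the paper's, so there is nothing to add.
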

\begin{proof}
Taking the stable projection on~\eqref{residual} and using the definition of $\psi_s$ results in
\begin{align}\label{res:s}
P_s\Res(\psi(t))=\int\limits_0^t e^{(t-\tau)L} P_s (\varepsilon^2 A\psi(\tau)+\cF(\psi(\tau)))~\txtd \tau.
\end{align}
	Using Assumptions~\ref{ass:A} and \ref{ass:F} we infer
	\begin{align*}
	\|P_s\Res(\psi(t))\| &\leq M \int\limits_0^t (1+(t-\tau))^{-\alpha} e^{-\mu (t-\tau)} (\varepsilon^2 \|\psi(\tau)\| +\|\psi(\tau)\|^3 )~\txtd \tau\\
	&\leq M \int\limits_0^t (1+(t-\tau))^{-\alpha} e^{-\mu (t-\tau)} ~\txtd \tau~  \sup\limits_{\tau\in[0,T_0\varepsilon^{-2}]} (\varepsilon^2 \|\psi(\tau)\| +\|\psi(\tau) \|^3 )\\
	& \leq  M \int\limits_0^{\infty} (1+\tau^{-\alpha})e^{-\mu \tau}~\txtd \tau  ~  \sup\limits_{\tau\in[0,T_0\varepsilon^{-2}]} (\varepsilon^2 \|\psi(\tau)\| +\|\psi(\tau) \|^3 ).
	\end{align*}
Recall that by Lemmas \ref{lemma:a} and \ref{lemma:psi_s}   
\[
\psi(t)= 
\varepsilon a(\varepsilon^2t) + \varepsilon^{2H+1}\psi_s(t) 
= \mathcal{O}(\varepsilon^{1-2\kappa})
\quad\text{ on }\tilde\Omega_\varepsilon, 	
\]	
which proves the statement.
	\qed\end{proof}

However, the estimate of $P_c\Res(\psi)$ is significantly more involved. Such estimates rely on averaging results, which are highly challenging to obtain for fractional noise~\cite{HairerLi,LiS}. Here we rely on an explicit pathwise approach possible for additive noise.

For our aims, we firstly prove some auxiliary results.
\begin{lemma}\label{lemma:rel:young} The following relation
\begin{align}\label{pswl}
 \int\limits_0^t P_sW_L(\tau\varepsilon^{-2})~\txtd \tau &=  \int\limits_0^t e^{(t-s)L\varepsilon^{-2}} P_sW(s\varepsilon^{-2})~\txtd s
 \end{align}
is valid.
\end{lemma}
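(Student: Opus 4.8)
The identity \eqref{pswl} says that integrating the fractional Ornstein--Uhlenbeck process $P_sW_L$ in time is the same as convolving the semigroup against the fractional Brownian motion $P_sW$; both sides are deterministic pathwise rewritings once a fixed realization of $W$ is chosen, so the plan is to prove the equality $\omega$-wise using the definition $P_sW_L(r)=\int_0^r e^{(r-\sigma)L}\,\txtd P_sW(\sigma)$ together with the stochastic Fubini theorem (or an integration-by-parts / sewing argument, depending on the range of $H$). First I would substitute the definition of the stochastic convolution into the left-hand side, writing
\begin{align*}
\int_0^t P_sW_L(\tau\varepsilon^{-2})~\txtd\tau = \int_0^t\!\!\int_0^{\tau\varepsilon^{-2}} e^{(\tau\varepsilon^{-2}-\sigma)L}~\txtd P_sW(\sigma)~\txtd\tau,
\end{align*}
and then change the order of the two integrations. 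After the substitution $\sigma=s\varepsilon^{-2}$ in the inner integral and Fubini, the $\txtd\tau$-integral becomes $\int_{s}^{t} e^{(\tau-s)L\varepsilon^{-2}}\,\txtd\tau$ acting on $\txtd P_sW(s\varepsilon^{-2})$; since $L$ is invertible on $\cS$ (by \eqref{stability:s}), one could even evaluate this as $\varepsilon^2 L^{-1}(e^{(t-s)L\varepsilon^{-2}}-\Id)$, but for the stated identity it suffices to recognize that an integration by parts in $s$ converts $\int_s^t e^{(\tau-s)L\varepsilon^{-2}}\txtd\tau\,\txtd P_sW(s\varepsilon^{-2})$ into $\int_0^t e^{(t-s)L\varepsilon^{-2}}P_sW(s\varepsilon^{-2})\,\txtd s$ plus boundary terms that vanish because $P_sW(0)=0$ and the bracket $\int_s^t\!\cdots$ vanishes at $s=t$.

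The technical care needed is in justifying the interchange of integrals and the integration by parts when $H<1/2$, where $W$ is only Hölder continuous of order $<H<1/2$ and the stochastic integral $\int_0^r e^{(r-\sigma)L}\,\txtd P_sW(\sigma)$ must be interpreted in the Young/fractional-calculus sense of \cite{DuncanMaslowski2} rather than as an It\^o integral. On the set $\tilde\Omega_\varepsilon$ the path $W(\varepsilon^{-2}\cdot)$ has finite $C^\beta$-norm with $\beta=H-\kappa$, and the smoothing of the analytic semigroup makes $\sigma\mapsto e^{(r-\sigma)L}P_s$ of bounded variation (in fact $C^1$) on $(0,r)$, so the convolution is a genuine Young integral and the usual Young-integration Fubini theorem applies. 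I would therefore phrase the argument pathwise on $\tilde\Omega_\varepsilon$: both sides are continuous $\cS$-valued functions of $t$, and it suffices to check that they agree as Young integrals, which reduces to the deterministic Fubini/integration-by-parts identity above.

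The main obstacle I anticipate is purely bookkeeping: making the integration by parts in the Young sense rigorous uniformly in $\varepsilon$, i.e. controlling the boundary term at $s\to0^+$ where the kernel $e^{(t-s)L\varepsilon^{-2}}$ is bounded but the factor $\int_s^t e^{(\tau-s)L\varepsilon^{-2}}\,\txtd\tau$ must be differentiated and paired against the rough path $W(\cdot\varepsilon^{-2})$. Once one observes that $\partial_s\!\int_s^t e^{(\tau-s)L\varepsilon^{-2}}\,\txtd\tau = -\Id + \varepsilon^{-2}L\int_s^t e^{(\tau-s)L\varepsilon^{-2}}\,\txtd\tau$ and that $P_sW(s\varepsilon^{-2})\to0$ as $s\to0$ by continuity, all boundary contributions cancel and the identity \eqref{pswl} follows. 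I would keep the write-up short, citing the stochastic Fubini theorem for Young/fractional integrals against fBm (see \cite{DuncanMaslowski,DuncanMaslowski2}) and performing the one-line integration by parts, since no new estimates are required — the lemma is a structural rewriting that will be used to extract the slow-time averaging in the subsequent estimate of $P_c\Res(\psi)$.
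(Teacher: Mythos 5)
Your proposal is correct and uses exactly the same two ingredients as the paper -- Fubini and an integration by parts -- but applies them in the opposite order, and that order is the one small point worth flagging. The paper first integrates by parts \emph{inside} the stochastic convolution, writing $\int_0^{\tau\varepsilon^{-2}}e^{(\tau\varepsilon^{-2}-s)L}\,\txtd P_sW(s)=P_sW(\tau\varepsilon^{-2})+L_s\int_0^{\tau\varepsilon^{-2}}e^{(\tau\varepsilon^{-2}-s)L}P_sW(s)\,\txtd s$, so that the subsequent interchange of integrals is an ordinary Lebesgue--Fubini step and no stochastic or Young--Fubini theorem is ever invoked; you instead exchange $\txtd\tau$ with $\txtd P_sW$ first, which is why you have to spend a paragraph justifying a Fubini theorem for Young integrals against a rough path. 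Your route is legitimate (the kernel is $C^1$ in the inner variable, so the interchange is harmless), but the paper's ordering makes the justification essentially free, and the paper additionally records a one-line alternative: $\int_0^tP_sW_L(s)\,\txtd s$ solves $\partial_tv=L_sv+P_sW$, so the identity is just the variation-of-constants formula. One small slip in your sketch: with $G(s)=\int_s^te^{(\tau-s)L\varepsilon^{-2}}\,\txtd\tau=\int_0^{t-s}e^{uL\varepsilon^{-2}}\,\txtd u$ one has $G'(s)=-\Id-\varepsilon^{-2}L\,G(s)=-e^{(t-s)L\varepsilon^{-2}}$ (note the minus sign in front of $\varepsilon^{-2}L$), which is what makes the boundary terms cancel and the stated kernel appear.
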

\begin{proof}
Using the definition of $W_L$, the substitution $\tau\mapsto \tau\varepsilon^{-2}$, integration by parts and Fubini, we derive
\allowdisplaybreaks
 \begin{eqnarray}
 \int\limits_0^t P_sW_L(\tau\varepsilon^{-2})~\txtd \tau &=& \int\limits_0^t\int\limits_0^{\tau\varepsilon^{-2}} e^{(\tau\varepsilon^{-2} -s )L} ~\txtd P_s W(s)~\txtd \tau\nonumber\\
 & =& \int\limits_0^t P_s W(\tau\varepsilon^{-2})~\txtd\tau + L_s\int\limits_0^t \int\limits_0^{\tau\varepsilon^{-2}} e^{(\tau\varepsilon^{-2}-s)L}P_sW(s)~\txtd s ~\txtd \tau\nonumber\nonumber\\
 & =& \int\limits_0^t P_s W(\tau\varepsilon^{-2})~\txtd\tau + L_s\varepsilon^{-2} \int\limits_0^t\int\limits_0^{\tau} e^{(\tau-s)L\varepsilon^{-2}} P_s W(s\varepsilon^{-2})~\txtd s~\txtd \tau\nonumber\\
 & =& \int\limits_0^t P_s W(\tau\varepsilon^{-2})~\txtd\tau + L_s\varepsilon^{-2} \int\limits_0^t \int\limits_s^{t} e^{(\tau-s)L\varepsilon^{-2}} P_sW(s\varepsilon^{-2})~\txtd\tau~\txtd s\nonumber\\
 & =&\int\limits_0^t P_s W(\tau\varepsilon^{-2})~\txtd \tau + \int\limits_0^t P_sW(s\varepsilon^{-2}) e^{(\tau -s)L\varepsilon^{-2}}|_s^t~\txtd s\nonumber\\
 & =& \int\limits_0^t P_s W(s\varepsilon^{-2})~\txtd s +\int\limits_0^t[ e^{(t-s)L\varepsilon^{-2}} P_s W(s\varepsilon^{-2}) -P_sW(s\varepsilon^{-2})]~\txtd s\nonumber\\
 &=& \int\limits_0^t e^{(t-s)L\varepsilon^{-2}} P_sW(s\varepsilon^{-2})~\txtd s,
 \end{eqnarray}
 which finishes the proof.
 \qed\end{proof}
 
 Alternatively, one can obtain~\eqref{pswl} regarding that $W_L$ is the solution of the linear SPDE
 \[
 \txtd u = L u ~\txtd t + \txtd W(t).
 \]
 Applying the stable projection yields
\[
P_sW_L(t) = L_s \int_0^t P_s W_L(s)ds  + P_s W(t).
\]
Thus $\int_0^t P_s W_L(s)ds$ solves 
$\partial_tv=L_s v + P_s W$.
Solving this equation proves the statement, too.

Based on this we establish the order of $P_c\text{Res}$ depending on $H$ as follows.
\begin{lemma}\label{pc:res}
	Let Assumptions~\ref{ass:L}--\ref{incond} hold
	and consider $\tilde\Omega_\varepsilon$ as in Definition \ref{def:Omega}. Then  there exists a constant $C_{\text{\em c,Res}}>0$ such that
	\begin{align}\label{pc:res1}
\tilde\Omega_\varepsilon\subset \Big\{ \omega \in\Omega : 	\sup\limits_{t\in[0,T_0\varepsilon^{-2}] } \|P_c\emph{\Res}(\psi(t))\| \leq C_{\text{\em c,Res}} \varepsilon^{3-13\kappa}\Big\}
\quad&\text{ if } H > \frac{1}{2},
	\end{align}
	respectively 
	\begin{align}\label{pc:res2}
	\tilde\Omega_\varepsilon\subset \Big\{ \omega \in\Omega : 	\sup\limits_{t\in[0,T_0\varepsilon^{-2}] } \|P_c\emph{\Res}(\psi(t))\| \leq C_{\text{\em{c,Res}}} \varepsilon^{2H+1+\frac{2H^2}{1-H}-18\kappa}\Big\}
	\quad&\text{ if } H\leq \frac{1}{2}.	   
	\end{align}
\end{lemma}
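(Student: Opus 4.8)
\emph{Proof sketch (proposal).} The plan is to first make $P_c\Res(\psi(t))$ completely explicit via the amplitude equation, and then to estimate the surviving terms, the only hard one being an averaged cubic term over the fast Ornstein--Uhlenbeck process.

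\textbf{Step 1: the explicit form of $P_c\Res$.} Since $\cN=\mathrm{Ker}(L)$ the semigroup acts as the identity on $\cN$; projecting \eqref{residual} onto $\cN$ and using $P_c\psi=\varepsilon\psi_c=\varepsilon a(\varepsilon^2\cdot)$, $P_cW_L=P_cW$ gives
\[
P_c\Res(\psi(t))=-\varepsilon a(\varepsilon^2t)+\varepsilon a(0)+\varepsilon^{2H+1}P_cW(t)+\int_0^tP_c\big[\varepsilon^2A\psi(\tau)+\cF(\psi(\tau))\big]\,\txtd\tau .
\]
Writing \eqref{a:amplitude} on the slow scale one has $\varepsilon a(\varepsilon^2t)=\varepsilon a(0)+\varepsilon^3\!\int_0^t(A_c\psi_c+\cF_c(\psi_c))\txtd\tau+\varepsilon^{2H+1}P_cW(t)$, so after expanding $\cF(\varepsilon\psi_c+\varepsilon^{2H+1}\psi_s)$ by trilinearity the three $\varepsilon^3$-terms cancel and there remains
\[
P_c\Res(\psi(t))=\int_0^t\!\Big[\varepsilon^{2H+3}A_c\psi_s+3\varepsilon^{2H+3}\cF_c(\psi_c,\psi_c,\psi_s)+3\varepsilon^{4H+3}\cF_c(\psi_c,\psi_s,\psi_s)+\varepsilon^{6H+3}\cF_c(\psi_s,\psi_s,\psi_s)\Big]\txtd\tau .
\]

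\textbf{Step 2: the easy terms.} On $\tilde\Omega_\varepsilon$ we have $\|\psi_c\|_{C^0}\lesssim\varepsilon^{-2\kappa}$ (Lemma~\ref{lemma:a}) and $\|\psi_s\|_{C^0}\lesssim\varepsilon^{-\kappa}$ (Lemma~\ref{lemma:psi_s}). Bounding the quadratic and cubic $\psi_s$-terms crudely by $\|\psi_c\|_{C^0}\|\psi_s\|_{C^0}^2$, resp.\ $\|\psi_s\|_{C^0}^3$, times the length $T_0\varepsilon^{-2}$, gives contributions of order $\varepsilon^{4H+1-\cO(\kappa)}$ and $\varepsilon^{6H+1-\cO(\kappa)}$; since $4H+1\ge\frac{1+H}{1-H}$ for $H\le\tfrac12$ (and $4H+1,6H+1>3$ for $H>\tfrac12$) these are harmless. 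For the linear term, $\int_0^t\psi_s\,\txtd\tau=\int_0^te^{\tau L}\psi_s(0)\txtd\tau+\int_0^tP_sW_L(\tau)\txtd\tau$; the first part is $\cO(\varepsilon^{-\kappa})$ by exponential decay, and for the second I would rescale and use Lemma~\ref{lemma:rel:young}, $\int_0^tP_sW_L(S\varepsilon^{-2})\txtd S=\int_0^te^{(t-S)L\varepsilon^{-2}}P_sW(S\varepsilon^{-2})\txtd S$, together with $\|e^{\cdot L\varepsilon^{-2}}P_s\|\le Me^{-\cdot\varepsilon^{-2}\mu}$ and the $C^\beta$-bound on $W(\varepsilon^{-2}\cdot)$ in $\tilde\Omega_\varepsilon$, obtaining $\cO(\varepsilon^{-2H-\kappa})$; hence the linear term is $\cO(\varepsilon^{3-\kappa})$. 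Finally the part of the cubic term carrying $e^{\tau L}\psi_s(0)$ is $\cO(\varepsilon^{2H+3-\cO(\kappa)})$, again harmless, so it remains to average $3\varepsilon^{2H+3}\int_0^t\cF_c(\psi_c(\tau),\psi_c(\tau),P_sW_L(\tau))\txtd\tau$.

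\textbf{Step 3: the averaged cubic term (the obstacle).} The plain Step~2 bound only gives order $1+2H$ here, so real averaging over the fast scale is needed, and I would split according to whether Young integration against the noise is admissible. For $H>\tfrac12$: $L$ is invertible on $\cS$, so from $\txtd\psi_s=L\psi_s\txtd\tau+P_s\txtd W$ one has $\psi_s\,\txtd\tau=L_s^{-1}(\txtd\psi_s-P_s\txtd W)$, turning the integral into Young integrals $\int_0^tG\,\txtd\psi_s$ and $\int_0^tG\,\txtd W$ with $G(\tau):=\cF_c(\psi_c(\tau),\psi_c(\tau),L_s^{-1}\cdot)$. These are admissible because $\psi_s$ and $W(\varepsilon^{-2}\cdot)$ are $C^{\beta'}$ with $\beta'$ close to $H>\tfrac12$ while $G\in C^{\beta}$ in time with $\beta$ close to $H$ (inherited from $a\in C^{H-\kappa}$). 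Over $[0,T_0\varepsilon^{-2}]$ the Young estimate produces the factor $[G]_{C^\beta}[\psi_s]_{C^{\beta'}}(T_0\varepsilon^{-2})^{\beta+\beta'}$, and here the crucial cancellation occurs: $[G]_{C^\beta}\lesssim\varepsilon^{2\beta-\cO(\kappa)}$ by slow variation, $[\psi_s]_{C^{\beta'}}\lesssim\varepsilon^{2\beta'-2H-\kappa}$ by the scaling of the noise, and the interval-length factor $\varepsilon^{-2(\beta+\beta')}$ exactly kills $\varepsilon^{2\beta+2\beta'}$, leaving $\varepsilon^{-2H-\cO(\kappa)}$ regardless of $\beta,\beta'$; multiplying by $\varepsilon^{2H+3}$ yields order $3$. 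For $H\le\tfrac12$ these Young integrals no longer converge ($\beta+\beta'\le2H\le1$), so I would instead rescale to $T=\varepsilon^2t$ and use Lemma~\ref{lemma:rel:young} to identify $\Phi(T):=\int_0^TP_sW_L(S\varepsilon^{-2})\txtd S=\int_0^Te^{(T-S)L\varepsilon^{-2}}P_sW(S\varepsilon^{-2})\txtd S$, which is Lipschitz in $T$ (derivative $\psi_s(T\varepsilon^{-2})$, bounded by $\varepsilon^{-\kappa}$ on $\tilde\Omega_\varepsilon$) but has the \emph{small} sup-norm $\|\Phi\|_{C^0([0,T_0])}\lesssim\varepsilon^{2-2H-\kappa}$, thanks to the $\varepsilon^2$-scale decay of the semigroup. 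Integrating by parts (Young) against $g(T):=\cF_c(a(T),a(T),\cdot)\in C^{H-\kappa}([0,T_0])$, the integral becomes $g(T_0)\Phi(T_0)-\int_0^{T_0}\Phi\,\txtd g$; interpolating $[\Phi]_{C^\gamma}\lesssim\|\Phi\|_{C^0}^{1-\gamma}\mathrm{Lip}(\Phi)^{\gamma}$ and balancing $\gamma$ against the Young constraint $\gamma+(H-\kappa)>1$, while using that the time interval is now only $T_0=\cO(1)$, produces the gain $\varepsilon^{\frac{2H^2}{1-H}-\cO(\kappa)}$; restoring the $\varepsilon^{-2}$ from the rescaling and the $\varepsilon^{2H+3}$ prefactor gives the stated order $2H+1+\frac{2H^2}{1-H}$.

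\textbf{Conclusion and main difficulty.} Collecting Steps~2--3 and intersecting the finitely many bounds, all valid on $\tilde\Omega_\varepsilon$, proves \eqref{pc:res1}--\eqref{pc:res2}. The hard part is entirely Step~3: the $\varepsilon^2$ time-scale separation must be exploited at precisely the right place — through the $L_s^{-1}$-substitution together with the cancellation of the interval-length factor when $H>\tfrac12$, and through the H\"older interpolation of the doubly-integrated stochastic convolution $\Phi$ when $H\le\tfrac12$ — and one must carry the numerous auxiliary powers of $\kappa$ along without losing the leading exponent; this dichotomy at $H=\tfrac12$ is exactly what produces the two different approximation orders.
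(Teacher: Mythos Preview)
Steps 1 and 2 and the $H\le\tfrac12$ half of Step 3 are essentially the paper's argument: the paper also rescales to the slow time-scale, writes the critical term as a Young-type integral $\int_0^T\mathcal H\,\txtd\mathcal Z$ against $\mathcal Z(\tau)=\int_0^\tau P_sW_L(s\varepsilon^{-2})\,\txtd s$ (your $\Phi$), invokes Lemma~\ref{lemma:rel:young}, and then bounds $\|\mathcal Z\|_{C^{\alpha'}}$ by interpolation (Lemma~\ref{est:h3} in Appendix~\ref{appendix:a}). Your $C^0$--Lipschitz interpolation of $\Phi$ is a legitimate variant of that lemma (indeed it uses the extra information $\|P_sW_L\|_{C^0}\le\varepsilon^{-\kappa}$ rather than only the H\"older norm of $P_sW$) and in any case yields at least the claimed exponent.

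The $H>\tfrac12$ half of Step 3, however, has a genuine gap. Your substitution $\psi_s\,\txtd\tau=L_s^{-1}(\txtd\psi_s-P_s\txtd W)$ on the \emph{fast} time-scale produces the Young integral $\int_0^t G\,\txtd\psi_s$, and your bound on it rests on the estimate $[\psi_s]_{C^{\beta'}([0,T_0\varepsilon^{-2}])}\lesssim\varepsilon^{2\beta'-2H-\kappa}$, justified only ``by the scaling of the noise''. But the set $\tilde\Omega_\varepsilon$ of Definition~\ref{def:Omega} contains only $\sup_t\|P_sW_L(t)\|\le\varepsilon^{-\kappa}$ and the H\"older bound on $W(\varepsilon^{-2}\cdot)$; it carries \emph{no} H\"older information on $P_sW_L$ or $\psi_s$, and deriving one pathwise from the available data is not automatic (the integration-by-parts representation of $P_sW_L$ involves the unbounded operator $L$ acting on $P_sW$, which lives only in $X$). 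The paper avoids this entirely by using the \emph{same} $\mathcal Z$-formulation for $H>\tfrac12$ as for $H\le\tfrac12$: since now $\beta>\tfrac12$, one may take $\alpha'=\beta'=\beta$ in the Young estimate~\eqref{young}, and Lemma~\ref{est:h2} gives directly on $\tilde\Omega_\varepsilon$ that $\|\mathcal Z\|_{C^\beta([0,T_0])}\le C\varepsilon^2\|P_sW(\varepsilon^{-2}\cdot)\|_{C^\beta}\le C\varepsilon^{2-2H-\kappa}$, so that $\varepsilon^{2H+1}I_2=\cO(\varepsilon^{3-13\kappa})$ with no H\"older control of $\psi_s$ beyond its sup norm.
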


\begin{proof}
Recall the ansatz~\eqref{psi}, i.e.
\[ 
\psi(t)=\varepsilon \psi_c(t) + \varepsilon^{2H+1}\psi_s(t), 
\quad\text{where}\quad 
\psi_s(t)=e^{tL}\psi_s(0)+P_sW_L(t)
\]
and $\psi_c(t)=a(\varepsilon^2t)$ where $a$ solves the amplitude equation~\eqref{a:amplitude}. 

Applying $P_c$ to~\eqref{residual} and using the amplitude equation, we obtain
\begin{align}\label{res:c}
P_c\Res(\psi(t))= \int\limits_0^t \varepsilon^2 A_c(P_s\psi(\tau))+ \cF_c(\psi(\tau))-\cF_c(P_c\psi(\tau))~\txtd \tau
\end{align}
By expanding the cubic nonlinearity we derive
\begin{eqnarray}\label{terms:res:c}
P_c\Res(\psi(t)) 
& =&\varepsilon^{2H+3} \int\limits_0^t A_c\psi_s(\tau)~\txtd \tau 
+3\varepsilon^{2H+3} \int\limits_0^t \cF_c(\psi_c(\tau),\psi_c(\tau),\psi_s(\tau))~\txtd\tau \nonumber\\
&& + 3\varepsilon^{4H+3} \int\limits_0^t \cF_c(\psi_c(\tau),\psi_s(\tau),\psi_s(\tau))~\txtd\tau + \varepsilon^{6H+3}\int\limits_0^t \cF_c(\psi_s(\tau))~\txtd\tau \nonumber\\
& =&\varepsilon^{2H+1} \int\limits_0^T A_c\psi_s(\varepsilon^{-2} \tau)~\txtd \tau 
+3\varepsilon^{2H+1} \int\limits_0^T \cF_c(a(\tau),a(\tau),\psi_s(\varepsilon^{-2}\tau))~\txtd\tau \\
&& + 3\varepsilon^{4H+1} \int\limits_0^T \cF_c(a(\tau),\psi_s(\varepsilon^{-2}\tau),\psi_s(\varepsilon^{-2}\tau))~\txtd\tau + \varepsilon^{6H+1}\int\limits_0^T \cF_c(\psi_s(\tau\varepsilon^{-2}))~\txtd\tau \nonumber
\end{eqnarray}
where we used the rescaling to the slow time-scale $T=t\varepsilon^2$.

Using the Assumptions~\ref{ass:A} and~\ref{ass:F} for $A$ and $\mathcal{F}$, together
with the bounds  $\psi_c=\mathcal{O}(\varepsilon^{-2\kappa})$  and 
$\psi_s=\mathcal{O}(\varepsilon^{-\kappa})$ from
Lemmas~\ref{lemma:a} and~\ref{lemma:psi_s}
 we  obtain on $\tilde\Omega_\varepsilon$ 
 for the last two terms in \eqref{terms:res:c}
\[
    \varepsilon^{4H+1} \int\limits_0^T \cF_c(a(\tau), \psi_s(\tau\varepsilon^{-2}),\psi_s(\tau\varepsilon^{-2}))~\txtd\tau =\mathcal{O}(\varepsilon^{4H+1-4\kappa})
\] 
    and
    \[
        \varepsilon^{6H+1} \int\limits_0^{T} \cF_c(\psi_s(\tau\varepsilon^{-2}))~\txtd \tau
        =\mathcal{O}(\varepsilon^{6H+1-3\kappa}).
    \]
For  the first term in~\eqref{terms:res:c} we use the definition of $\psi_s$,
Lemma \ref{lemma:rel:young}, and the fact that $A_c\in\cL(\cN)$. This implies
\begin{eqnarray*}
\varepsilon^{2H+3} \| \int\limits_0^T A_c\psi_s(\varepsilon^{-2} \tau)~\txtd \tau\|  
&\leq &
C \varepsilon^{2H+1} \Big[ 
\|\int\limits_0^T  e^{\tau\varepsilon^{-2} L}\psi_s(0)~\txtd  \tau\|  
+  \| \int\limits_0^T P_s W_L(\varepsilon^{-2}\tau)~\txtd \tau \|\Big] \\
&\leq & C \varepsilon^{2H+1} \Big[ 
\int\limits_0^T  e^{\tau\varepsilon^{-2} \mu} \|\psi_s(0)\| ~\txtd  \tau  
+  \| \int\limits_0^T e^{(T-\tau)\varepsilon^{-2}L}  P_s W(\varepsilon^{-2}\tau)~\txtd \tau \|\Big] \\
&\leq & C \varepsilon^{2H+3} \Big[ \|\psi_s(0)\|  
+  \sup_{\tau \in[0,T]}\| P_s W(\varepsilon^{-2}\tau) \|\Big]\\
& = & \mathcal{O}(\varepsilon^{2H+3-\kappa}) 
\quad\text{on } \tilde\Omega_\varepsilon.
\end{eqnarray*}


We now focus on the final, the second term of~\eqref{terms:res:c}, which is the crucial one in the estimate. 
\begin{align}
\lefteqn{\varepsilon^{2H+1} \int\limits_0^T \cF_c (a(\tau),a(\tau),\psi_s(\tau\varepsilon^{-2}) )~\txtd \tau}\nonumber\\
& = \varepsilon^{2H+1} \int\limits_0^{T}\cF_c (a(\tau),a(\tau), e^{\tau\varepsilon^{-2}L}\psi_s(0) )~\txtd \tau + \varepsilon^{2H+1} \int\limits_0^T \cF_c(a(\tau),a(\tau),  P_sW_L(\tau\varepsilon^{-2}))~\txtd\tau\label{todo}
\\
& := \varepsilon^{2H+1} I_1 +\varepsilon^{2H+1} I_2.\nonumber
\end{align}
We can easily estimate the first integral as follows
\begin{align*}
\|I_1\|\leq \int\limits_0^T \| \cF_c (a(\tau),a(\tau), e^{\tau\varepsilon^{-2}L}\psi_s(0) )\|~\txtd \tau 
&\leq C_a \|a\|^2_{C([0,T];\cN)}\|\psi_s(0)\| \int\limits_0^T C C_a e^{\tau\varepsilon^{-2}\mu}~\txtd\tau 
\\ & \leq\varepsilon^{2-5\kappa} .
\end{align*}
Consequently the first term is of order $\cO(\varepsilon^{2H+3-5\kappa})$ on $\tilde\Omega_\varepsilon$. 

We have several possibilities to estimate $I_2$. 
Directly, we could obtain for $\omega\in\tilde\Omega_\varepsilon$ the upper bound 
\begin{align*}
\|I_2\|&\leq C \int\limits_0^T  \|a(\tau)\|^2 \|P_s W_L(\tau\varepsilon^{-2})\|~\txtd\tau 
 \leq C\varepsilon^{-5\kappa}.
\end{align*}
This would imply that the first term in~\eqref{todo} is of order $\cO(\varepsilon^{2H+1-5\kappa})$, which is not enough for our aims if $H\leq \frac{1}{2}$. 
Therefore, we derive a better estimate under the assumption that $W$ is trace-class with values in $X$. 
Using Lemma~\ref{lemma:rel:young} we can rewrite
 \begin{align*}
\int\limits_0^T \cF_c(a(\tau),a(\tau), P_sW_L(\tau\varepsilon^{-2})) ~\txtd\tau
& =\int\limits_0^T \cF_c(a(\tau ),a(\tau ), \frac{\partial}{\partial\tau} \int\limits_0^{\tau} P_sW_L(s\varepsilon^{-2}))~\txtd s~\txtd \tau\nonumber\\
& =: \int\limits_0^T \mathcal{H}(\tau)~\txtd \mathcal{Z}(\tau),
 \end{align*}
 where we interpret this as a Young-type integral 
 with $\mathcal{Z}(\tau)= \int_0^{\tau} P_sW_L(s\varepsilon^{-2})~\txtd s$ where the integrand $\mathcal{H}$ is bounded by 
 \[
 \|\mathcal{H}\|_{C^{\beta'}([0,T];\cL(X,\mathcal{N}))} \leq 
 C\|a\|^2_{C^{\beta'}([0,T];\mathcal{N})}.
 \]
Therefore we use a Young-type estimate, which follows from  Proposition 1 of \cite{GubinelliLejayTindel} (see also \cite{Young})
 \begin{align}\label{young}
     \Big\|\ \int\limits_0^T \mathcal{H}(\tau)~\txtd \mathcal{Z}(\tau)\Big\| \leq C \|a\|^2_{C^{\beta'}([0,T];\cN)} \cdot \Big\| \int\limits_0^{\tau} P_sW_L(s\varepsilon^{-2})~\txtd s  \Big\|_{C^{\alpha'}([0,T];X)},
 \end{align}
 provided that the sum of the H\"older exponents $\alpha'+\beta'>1$. 
 If $H>\frac{1}{2}$ we can apply this inequality for $\beta'=\beta>\frac{1}{2}$ to further obtain 
 \begin{align*}
 \int\limits_0^T \|\cF_c(a(\tau),a(\tau), P_sW_L(\tau\varepsilon^{-2}))\| ~\txtd\tau \leq C_a \|a\|^2_{C^{\beta}([0,T];\cN)} \cdot \Big\|\tau\mapsto \int\limits_0^\tau P_sW_L(s\varepsilon^{-2})~\txtd s\Big\|_{C^{\beta}([0,T];X)}.
 \end{align*}
  Applying Lemma~\ref{est:h2} leads to 
 \begin{align*}
 \Big\|t\mapsto \int\limits_0^t P_sW_L(\tau\varepsilon^{-2})~\txtd\tau\Big\|_{C^{\beta}([0,T];X)} \leq \varepsilon^{2} \|P_s W(\varepsilon^{-2}\cdot)\|_{C^{\beta}([0,T];X)}.
 \end{align*}
Putting all these estimates together we infer that
\begin{align}\label{i2}
\|I_2\| =  \Big\|\int\limits_0^T \cF_c(a(\tau),a(\tau), P_sW_L(\tau\varepsilon^{-2})~\txtd\tau\Big\|
=\cO(\varepsilon^{2-2H-13\kappa})
\quad\text{ on }\tilde\Omega_\varepsilon,
\end{align}
consequently 
\[
\varepsilon^{2H+1}I_2
=\cO(\varepsilon^{2H+1+2-2H-13\kappa})
=\cO(\varepsilon^{3-13\kappa})
\quad\text{ on }\tilde\Omega_\varepsilon,
\]
which means that the second term in~\eqref{todo} is of order $\cO(\varepsilon^{3-13\kappa})$. This is enough for our aims if the Hurst parameter $H>\frac{1}{2}$. 

For $H\leq \frac{1}{2}$ we apply~\eqref{young} for $\beta'=\beta=H-\kappa$ and $\alpha'=1-H+2\kappa$  and use Lemma~\ref{est:h3}, which relies on a suitable interpolation of the H\"older norms.


Thus we obtain from Lemma~\ref{est:h3} with $\gamma=\beta$ that for an arbitrarily small $\tilde\kappa>0$
\begin{align*}
\int\limits_0^T 
\|\cF_c(a(\tau),a(\tau),P_sW_L(\tau\varepsilon^{-2}))\| ~\txtd\tau 
&\leq C_a \|a\|^2_{C^{\beta}([0,T];\cN)} \cdot \Big\|\tau\mapsto \int\limits_0^\tau P_sW_L(s\varepsilon^{-2})~\txtd s\Big\|_{C^{\alpha'}([0,T];X)}\\
& \leq  C_a \|a\|^2_{C^{\beta}([0,T];\cN)} \varepsilon^{2(1-\alpha')/(1-\beta) - \tilde\kappa} \|P_sW(\varepsilon^{-2}\cdot)\|_{C^\beta([0,T];X)}\\
& \leq C_a\varepsilon^{2(1-\alpha')/(1-\beta)-2H -\tilde\kappa- 13\kappa}
\quad\text{on }\tilde\Omega_\varepsilon.
\end{align*}
Since $\beta=H-\kappa$ and $\alpha'= 1-H+2\kappa$ we obtain that 
\begin{align*}
    2\frac{1-\alpha'}{1-\beta} -2H - \tilde\kappa = \frac{2H^2-2\kappa}{1-H}-\tilde\kappa
    \geq  \frac{2H^2}{1-H}- 5\kappa
\end{align*}
Therefore, we
   get in total that  the second term of~\eqref{todo} is of order $\cO\Big(\varepsilon^{2H+1 +\frac{2H^2}{1-H} -18\kappa}\Big)$, which is consistent with the result for the Brownian motion, see~\cite[Lemma 4.7]{BloemkerHairer1}. Putting all these estimates together proves~\eqref{pc:res1} and~\eqref{pc:res2}.
\qed\end{proof}

The next result justifies the fact that $\psi$ approximates the solution of~\eqref{spde} up to an error term, which is determined by the order of $P_c$Res.
\begin{theorem}\label{approx}
	Let Assumptions~\ref{ass:L}--\ref{incond} hold and consider  $\tilde\Omega_\varepsilon$ as in Definition~\ref{def:Omega}. Furthermore, let $u$ be the maximal mild solution of~\eqref{spde}. Then there exists a positive constant $C_{\text{\em approx}}$ such that for sufficiently small $\varepsilon>0$ we obtain
	\begin{align*}
	&\tilde\Omega_\varepsilon\subset \Big\{\omega\in\Omega :  \sup\limits_{t\in[0,T_0\varepsilon^{-2}]} \|u(t)-\psi(t)\|\leq C_{\text{\em approx}}~ \varepsilon^{3-18\kappa}\Big\}, \quad & \text{if } H> \frac{1}{2}\\
	& \tilde\Omega_\varepsilon\subset \Big\{\omega\in\Omega :  \sup\limits_{t\in[0,T_0\varepsilon^{-2}]} \|u(t)-\psi(t)\|\leq C_{\text{\em approx}}~ \varepsilon^{2H+1+\frac{2H^2}{1-H}-23\kappa}\Big\},\quad &\text{  if } H\leq \frac{1}{2}.
	\end{align*}
\end{theorem}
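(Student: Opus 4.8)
The plan is to set up the error $R(t):=u(t)-\psi(t)$ and close a Gronwall-type estimate on $\tilde\Omega_\varepsilon$ using the residual bounds from Lemmas~\ref{ps:res} and~\ref{pc:res} together with the attractivity and a-priori bounds from Lemmas~\ref{lemma:attractivity}, \ref{lemma:a} and~\ref{lemma:psi_s}. First I would write down the integral equation satisfied by $R$: subtracting \eqref{residual} from the mild formulation \eqref{mild:sol} yields
\begin{align*}
R(t) = e^{tL}R(0) + \int\limits_0^t e^{(t-\tau)L}\big[\varepsilon^2 A R(\tau) + \cF(u(\tau)) - \cF(\psi(\tau))\big]~\txtd\tau + \Res(\psi(t)),
\end{align*}
where $R(0)=u_0-\psi(0)=0$ by the choice of initial data for $\psi_c,\psi_s$. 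Because $\cF$ is trilinear and symmetric, $\cF(u)-\cF(\psi)$ is a sum of terms each linear in $R$ and quadratic in $u$ or $\psi$; on $\tilde\Omega_\varepsilon$ we have $\|\psi(\tau)\|=\cO(\varepsilon^{1-2\kappa})$ (Lemmas~\ref{lemma:a}, \ref{lemma:psi_s}) and, as long as $\|R(\tau)\|$ stays small, also $\|u(\tau)\|=\cO(\varepsilon^{1-2\kappa})$, so the nonlinear difference is bounded by $C\varepsilon^{2-4\kappa}\|R(\tau)\|$ in the $X^{-\alpha}$ norm.

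The key point is the separate treatment of the two components. Splitting $R=R_c+R_s$ with $R_c=P_cR$, $R_s=P_sR$, the stable part obeys, using \eqref{fr:power:s} and the smallness just described,
\begin{align*}
\|R_s(t)\| \leq M\int\limits_0^t (1+(t-\tau)^{-\alpha})e^{-\tilde\mu(t-\tau)}\big(\varepsilon^2\|R(\tau)\|+\varepsilon^{2-4\kappa}\|R(\tau)\|\big)~\txtd\tau + \|P_s\Res(\psi(t))\|,
\end{align*}
so by Lemma~\ref{ps:res} and the integrability of $(1+\tau^{-\alpha})e^{-\tilde\mu\tau}$ we get $\|R_s(t)\|\leq C\varepsilon^{2-4\kappa}\sup_{\tau\le t}\|R(\tau)\| + C\varepsilon^{3-6\kappa}$. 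For the central part there is no semigroup smoothing, but the nonlinearity still contributes only $\varepsilon^2$-small coefficients, and the crucial nonlinear contribution $\varepsilon^2\int_0^t P_c[\text{(terms with }R_c\text{)}]$ lives on the slow scale $T=\varepsilon^2 t$; rescaling, $\|R_c(t)\|\leq C\int_0^{T}\|R(\tau\varepsilon^{-2})\|~\txtd\tau + \|P_c\Res(\psi(t))\|$, and feeding in Lemma~\ref{pc:res} gives $\|R_c(t)\|\leq C\sup_{\tau\le t}\|R(\tau)\| T_0 + C\varepsilon^{q(H)-c\kappa}$ with $q(H)=3$ for $H>\tfrac12$ and $q(H)=2H+1+\tfrac{2H^2}{1-H}$ for $H\le\tfrac12$. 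Adding the two estimates, absorbing the $\varepsilon^2$-small self-coefficient on the left (valid for $\varepsilon$ small) and applying Gronwall on $[0,T_0\varepsilon^{-2}]$ — noting that the exponential factor is $e^{CT_0}$, independent of $\varepsilon$, precisely because the $\varepsilon^2$ in front of $A$ and the slow-scale structure cancel the $\varepsilon^{-2}$ length of the interval — yields $\sup_{t\in[0,T_0\varepsilon^{-2}]}\|R(t)\|\leq C\varepsilon^{q(H)-c\kappa}$ with a $\kappa$-loss that I would simply read off as $18\kappa$ for $H>\tfrac12$ and $23\kappa$ for $H\le\tfrac12$, enlarging the constant $c$ from Lemma~\ref{pc:res} to accommodate the $\varepsilon^{-2\kappa}$-type factors picked up in the nonlinear difference.

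One technical wrinkle I would handle first is that the above reasoning assumes $R$ stays in the regime where $\|u\|$ is controlled; this is resolved by a bootstrap/stopping-time argument identical in spirit to Lemma~\ref{lemma:attractivity}: define $\tau^\star=\inf\{t:\|R(t)\|>\varepsilon^{q(H)-c\kappa}\}$ (or $\inf\{t:\|u(t)\|$ leaves the good ball$\}$), run the estimate on $[0,\tau^\star\wedge T_0\varepsilon^{-2}]$, and conclude by smallness of $\varepsilon$ that $\tau^\star\ge T_0\varepsilon^{-2}$. The main obstacle is not any single estimate — each ingredient is already packaged in the preceding lemmas — but rather bookkeeping the Gronwall constant uniformly in $\varepsilon$ on the long interval $[0,T_0\varepsilon^{-2}]$: one must make sure that every term multiplying $\|R(\tau)\|$ under the integral sign either carries a compensating $\varepsilon^2$ (the linear perturbation $\varepsilon^2 A$ and the nonlinear difference, which is $\varepsilon^{2-4\kappa}$-small) or is integrated against the exponentially decaying stable kernel, so that no factor of $\varepsilon^{-2}$ survives into the exponent. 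Everything else is the routine trilinear expansion of $\cF(u)-\cF(\psi)$ and the collection of the $\kappa$-powers.
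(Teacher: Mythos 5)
Your overall architecture (split $R=R_c+R_s$, handle $R_s$ with the exponentially decaying stable semigroup, handle $R_c$ on the slow time scale, close with a stopping-time/bootstrap) matches the paper, and the $R_s$ estimate goes through essentially as you describe. The gap is in the central part, and it is exactly the point the paper flags as the main difficulty: you cannot ``simply apply Gronwall'' there. The term of $\cF(u)-\cF(\psi)$ that is linear in $R$ and quadratic in $\psi_c$ is $3\varepsilon^{2}\cF_c(\psi_c,\psi_c,R)$, and on $\tilde\Omega_\varepsilon$ Lemma~\ref{lemma:a} only gives $\|\psi_c\|=\|a\|=\cO(\varepsilon^{-2\kappa})$. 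So after rescaling to $T=\varepsilon^{2}t$ the coefficient multiplying $\|R\|$ under the slow-time integral is $\cO(\varepsilon^{-4\kappa})$, not $\cO(1)$: your inequality is really
\begin{align*}
\|R_c(t)\|\leq C\varepsilon^{-4\kappa}\int_0^{T}\sup_{\sigma\le \tau\varepsilon^{-2}}\|R(\sigma)\|~\txtd\tau + \varepsilon^{-\gamma}\|P_c\Res(\psi(t))\| + \dots,
\end{align*}
and Gronwall then produces the factor $e^{C T_0\varepsilon^{-4\kappa}}$, which blows up faster than any negative power of $\varepsilon$ and destroys the claimed polynomial error bound. The $\varepsilon^{2}$ in front of the nonlinearity compensates the $\varepsilon^{-2}$ length of the interval, but it does not compensate the $\varepsilon^{-4\kappa}$ from $\|a\|^{2}$; sweeping that into ``enlarging the constant $c$'' in the exponent of $\varepsilon$ is not legitimate because the loss sits in the Gronwall exponential, not in a prefactor.

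The paper's way out is structural and uses a hypothesis your argument never touches: the sign condition \eqref{f1}, which gives $\langle \cF_c(a,a,v),v\rangle<0$. One writes the $R_c$-equation on the slow scale as a non-autonomous linear equation $\partial_T\widetilde R_c=[A_c+B_a(T)]\widetilde R_c+\dots$ with $B_a(T)v=3\cF_c(a(T),a(T),v)$; although $\|B_a(T)\|=\cO(\varepsilon^{-4\kappa})$, the dissipativity $\langle v,B_a(T)v\rangle\le 0$ yields $\|U(T,S)\|_{\cL(\cN)}\le e^{\|A_c\|_{\cL(\cN)}(T-S)}$ for the generated evolution family, uniformly in $\varepsilon$. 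The dangerous linear-in-$R_c$ term is thus absorbed into the propagator rather than into a Gronwall exponent, and only the genuinely higher-order remainders $\cK$ are estimated crudely via the stopping time. A second, smaller omission: with this variation-of-constants formulation the forcing is $\varepsilon^{-\gamma}\partial_T\Res_c(\psi(\varepsilon^{-2}T))$, and one must integrate by parts against $U(T,S)$ before invoking Lemma~\ref{pc:res} (this is where the extra $\kappa$-losses in the final exponents come from); simply adding $\|P_c\Res(\psi(t))\|$ as an inhomogeneity is not what survives the correct argument, though that part could be repaired. The missing use of \eqref{f1} is the essential defect.
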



\begin{proof}

We define $R$ by
\begin{align}\label{r}
u(t) = \psi(t) + \varepsilon^\gamma R(t)=\varepsilon\psi_c(t) +\varepsilon^{2H+1} \psi_s(t) +\varepsilon^{\gamma} R(t),
\end{align}	
where 
\[\gamma=3-13\kappa \text{ if }H> \frac{1}{2}
\quad\text{ or }\quad 
\gamma=2H+1+\frac{2H^2}{1-H}-18\kappa
\text{ if }H\leq 1/2,
\]
which is consistent with the order of the residual.

By definition $R(0)=0$. Our aim is to show that $R$ is of order almost $\cO(1)$, but we need additional $\kappa$'s in the final estimate. 
Taking the difference between~\eqref{mild:sol} and~\eqref{residual}, we obtain
\begin{align*}
R(t) =\varepsilon^{-\gamma} \Res (\psi(t)) +\varepsilon^2\int\limits_0^t e^{(t-\tau)L}  R(\tau)~\txtd\tau + \varepsilon^{-\gamma}\int\limits_0^t e^{(t-\tau)L} [\cF(u(\tau)) -\cF(\psi(\tau))]~\txtd \tau.
\end{align*}
We further set $R_c=P_c R$, $R_s=P_sR$, fix $\delta^*:=5\kappa>0$ such that $2H-3\kappa>\delta^*>4\kappa$ (which is satisfied if $4\kappa<H$) 
and introduce the stopping time
\begin{align}\label{stopping:time:r}
\tau_{R}:=\inf\{ t>0 : \|R(t)\|\geq \varepsilon^{-\delta^*} \}\wedge T_0\varepsilon^{-2}.
\end{align}
Now it remains to show 
that $\tau_{R}\geq T_0\varepsilon^{-2}$.
Regarding that we will bound first $R_s$ and later $R_c$. We have 
\begin{align*}
R_s(t)= \varepsilon^{-\gamma} \text{Res}_s(\psi(t)) + \varepsilon^2 \int\limits_0^t e^{(t-\tau) L_s} A_s R(\tau)~\txtd \tau + \varepsilon^{-\gamma} \int\limits_0^t e^{(t-\tau)L_s}  [\cF_s(u(\tau)) - \cF_s(\psi(\tau))]~\txtd \tau,
\end{align*}
we obtain using the definition of $\tau_R$ and Lemma~\ref{ps:res}
\begin{align*}
\|R_s(t)\|  \leq \ &
 C_{\text{s,Res}} \varepsilon^{-\gamma+3-6\kappa}
\\ & + C \int\limits_0^t(1+(t-\tau)^{-\alpha}) e^{-(t-\tau)\mu} [C_A \varepsilon^{2-\delta^*} +\|\cF(u(\tau))-\cF(\psi(\tau))\|_{X^{-\alpha}} ]~\txtd \tau.
\end{align*}
Using Assumption~\ref{ass:F} and the fact that $u=\psi+\varepsilon^{\gamma}R$, we can estimate $\cF(u)-\cF(\psi)$ as
\begin{align*}
\|\cF(u) - \cF(\psi)\|_{X^{-\alpha}} &\leq C_F \|3\varepsilon^\gamma \psi^2 R + 3\psi \varepsilon^{2\gamma}R^2 +\varepsilon^{3\gamma}R^3 \| \\
& \leq C_F \varepsilon^{\gamma+2} (3 \|\psi \varepsilon^{-1}\|^2 \|R\|  +3 \varepsilon^{\gamma-1} \|\psi \varepsilon^{-1}\| \|R\|^2 + \varepsilon^{2\gamma-2} \|R\|^3 \ ).
\end{align*}
Therefore, for $t\leq \tau_R$ we have on $\tilde\Omega_\varepsilon$ that 
\begin{align*}
\|R_s(t)\| & \leq C_{\text{s,Res}} \varepsilon^{-\gamma+3-6\kappa}+ C \int_0^t (1+(t-\tau)^{-\alpha}) e^{-(t-\tau)\mu} [(C_A +C_F) \varepsilon^{2-\delta^*}]~\txtd \tau ,
\end{align*}
which leads to (using $2-2H -(2H^2)/(1-H)=2/(1-H)$)
\begin{align*}
    \|R_s(t)\| &\leq C \begin{cases}
    \varepsilon^{7\kappa}, & H> 1/2\\
     \varepsilon^{\frac2{1-H} +12\kappa}, & H\leq 1/2
    \end{cases}\\
     & \leq C < \frac12\varepsilon^{-\delta^*} \quad\text{if $\varepsilon$ is small.}
\end{align*}

The final estimate of $R_c$ is more involved. Applying $P_c$ to~\eqref{r} we derive
\begin{align*}
R_c(t)=\varepsilon^{-\gamma} \text{Res}_c(\psi(t)) + \varepsilon^2\int\limits_0^t A_c R_c(\tau)~\txtd \tau + \varepsilon^{-\gamma} \int\limits_0^t [\cF_c(u(\tau)) - \cF_c(\psi(\tau))]~\txtd \tau.
\end{align*}
Recalling that
$$\text{Res}_c(\psi(t))=\int\limits_0^t \varepsilon^2 A_c(\psi(\tau))+\cF_c(\psi(\tau))~\txtd\tau$$
and that $A_c, F_c$ and $\psi$ are continuous in time, we obtain that $\text{Res}_c$ is differentiable. Regarding this and expanding the cubic term, we further get
\begin{align*}
\partial_t R_c& = \varepsilon^2 A_c R_c +\varepsilon^{-\gamma} \partial_t \text{Res}_c(\psi) + \varepsilon^{-\gamma} [\cF_c(\varepsilon\psi_c + \varepsilon^{2H+1} \psi_s + \varepsilon^\gamma R) -\cF_c(\varepsilon \psi_c + \varepsilon^{2H+1} \psi_s ) ].
\end{align*}
Recalling that $a(\varepsilon^2 t)=\psi_c(t)$, rescaling to the slow-time by setting $\widetilde{R}_c(T)=R_c(T\varepsilon^{-2})$, where $\widetilde{R}$ thus evolves on the slow-time scale. Similarly we rescale $R$ (although it is on the fast time-scale) to obtain
\begin{align*}
    \partial_T \widetilde{R}_c
    &= A_c \widetilde{R}_c + \varepsilon^{-\gamma}\partial_T (\text{Res}_c(\psi(\varepsilon^{-2}T))) \\
    & \qquad + \varepsilon^{-\gamma-2} [\cF_c(\varepsilon\psi_c +\varepsilon^{2H+1} \psi_s + \varepsilon^\gamma \widetilde{R}) -\cF_c(\varepsilon\psi_c+\varepsilon^{2H+1}\psi_s)]\\
    & =  A_c \widetilde{R}_c + \varepsilon^{-\gamma}\partial_T (\text{Res}_c(\psi(\varepsilon^{-2}T))) )+ 3\varepsilon^{-\gamma+1}\cF_c(a,a,\varepsilon^{\gamma-1}\widetilde{R})\\
    &  \qquad+ \varepsilon^{-\gamma-2} [\cF_c(\varepsilon\psi_c +\varepsilon^{2H+1} \psi_s + \varepsilon^\gamma \widetilde{R}) 
    -\cF_c(\varepsilon\psi_c+\varepsilon^{2H+1}\psi_s) 
     - 3 \varepsilon^{-\gamma+1}\cF_c(a, a, \varepsilon^{\gamma-1} \widetilde{R})]\\
    & = [A_c + 3\cF_c(a,a, \widetilde{R})] \widetilde{R}_c + \varepsilon^{-\gamma}\partial_T (\text{Res}_c(\psi(\varepsilon^{-2}T))\\
    &  \qquad+  \varepsilon^{-\gamma+1} [\cF_c(\psi_c +\varepsilon^{2H} \psi_s + \varepsilon^{\gamma-1} \widetilde{R}) -\cF_c(\psi_c+\varepsilon^{2H}\psi_s) - 3 \cF_c(a, a, \varepsilon^{\gamma-1} \widetilde{R})].
\end{align*}
We introduce the $\cL(\cN)$-valued process $B_a$ by $B_a(T)v:=3 F_c(a(T),a(T),v)$ for $T>0$ and $v\in\cN$. 
Moreover, we abbreviate the remaining nonlinear terms as 
\begin{align}\label{KT}
\varepsilon^{\gamma-1} \cK(T):=[\cF_c(\psi_c +\varepsilon^{2H} \psi_s + \varepsilon^{\gamma-1} \widetilde{R}) -\cF_c(\psi_c+\varepsilon^{2H}\psi_s)- 3 \cF_c(\psi_c, \psi_c, \varepsilon^{\gamma-1} \widetilde{R})].   
\end{align}
Consequently, the previous equation for $\widetilde{R}_c$ rewrites as
\begin{align}\label{nonautonomous:eq}
\partial_T \widetilde{R}_c(T) =[A_c + B_a(T)] \widetilde{R}_c(T) + \varepsilon^{-\gamma}\partial_T (\text{Res}_c(\psi(\varepsilon^{-2}T))) + \cK(T).
\end{align}
Note that this is a non-autonomous evolution equation in $\cN$, which we have to analyze in order to bound $\widetilde{R}_c$. Major technical problems arise due to the fact that $B_a$ is not of order $1$. 
Thus we cannot simply apply Gronwall inequality, where the bound on $B_a$ would appear in the exponential.

Setting $d:=\psi_c + \varepsilon^{2H}\psi_s$ for short hand notation and expanding the nonlinear term results in
\begin{align*}
    \lefteqn{\cF_c(\psi_c+ \varepsilon^{2H} \psi_s + \varepsilon^{\gamma-1} R) - \cF_c(\psi_c + \varepsilon^{2H}\psi_s) - 3 \cF_c(\psi_c,\psi_c, \varepsilon^{\gamma-1} R)} \\
    &= \cF_c(d+ \varepsilon^{\gamma-1} R, d+ \varepsilon^{\gamma-1} R, d+ \varepsilon^{\gamma-1} R  ) - \cF_c(d,d,d) - 3 \cF_c(\psi_c,\psi_c, \varepsilon^{\gamma-1}R)  \\
    & = 3 \varepsilon^{\gamma-1} \cF_c(d,d, R)  +3\varepsilon^{2\gamma-2} \cF_c(d,R,R) + \varepsilon^{3\gamma-3} \cF_c(R,R,R) \\
    & = 3  \varepsilon^{\gamma-1} \cF_c(\psi_c + \varepsilon^{2H} \psi_s, \psi_c + \varepsilon^{2H}\psi_s,  R) + 3\varepsilon^{2\gamma-2} \cF_c(\psi_c +\varepsilon^{2H}\psi_s, R, R) \\ 
    & \qquad\qquad\qquad\qquad + \varepsilon^{3\gamma-3} \cF_c(R,R,R)  
    - 3\cF_c(\psi_c,\psi_c, \varepsilon^{\gamma-1} R)\\
    & = 6 \varepsilon^{2H+\gamma-1}\cF_c(\psi_c,\psi_s, R) + 3 \varepsilon^{4H+\gamma-1}\cF_c( \psi_s, \psi_s,   R) +3\varepsilon^{2\gamma-2}\cF_c(\psi_c,R, R) \\
    & \qquad\qquad\qquad\qquad  +3 \varepsilon^{2H+2\gamma-2} \cF_c(\psi_s, R,R) + 3\varepsilon^{3\gamma-3}\cF_c(R,R,R) .
\end{align*}
Now we compute the order of $\cK$ regarding~\eqref{KT} (just divide the previous by $\varepsilon^{\gamma-1}$ to get to $\cK(T)$).
We observe that most of the previous terms are of higher order. 
Note that the third term gives a contribution of order  $\cO(\varepsilon^{2H+\frac{2H^2}{1-H}-19\kappa-2\delta^*})$ for $H\leq \frac{1}{2}$, which shows once more that the estimate on $P_c\text{Res}$ for $H\leq \frac{1}{2}$ derived in Lemma~\ref{pc:res} is crucial. 
For all the terms we use that $\cF$ is trilinear, the uniform bound on $a$ and $\psi_s$ from Lemmas~\ref{lemma:a} and \ref{lemma:psi_s}, and the definition of the stopping time $\tau_R$ specified in~\eqref{stopping:time:r}, in order to bound $R$.
Putting all these estimates together we infer that
\begin{align*}
\|\cK(T)\|= \cO(\varepsilon^{2H-3\kappa-\delta^*}), \quad\text{ provided that } T\in[0,T_0]\cap[0,\tau_R\varepsilon^2].
\end{align*}
Furthermore, we denote by $U(\cdot,\cdot)$ the evolution family generated by the non-autonomous linear operator of~\eqref{nonautonomous:eq} $A_c+B_a(T)$. Using that $A_c\in\cL(\cN)$ and that $\langle v, B_a(T)v\rangle< 0$, according to assumption~\eqref{f1}, we have  $\langle v, (A_c+B_a(T))v\rangle\leq \|A_c\|_{\cL(\cN)}\|v\|^2$ for all $v\in\cN$. Therefore using Gronwall's lemma we conclude that
\begin{align}\label{est:U}
\|U(T,S)\|_{\cL(\cN)} \leq e^{\|A_c\|_{\cL(\cN)}(T-S)} \quad \text{ for all } 0\leq S\leq T.
\end{align}
Solving~\eqref{nonautonomous:eq} using the variation of constants formula further leads to 
\begin{align}\label{var:const}
   \widetilde{R}_c(T) =\varepsilon^{-\gamma} \int\limits_0^T U(T,S)\partial_S(\text{Res}_c(\psi(\varepsilon^{-2}S) ))~\txtd S + \int\limits_0^T U(T,S)\cK(S)~\txtd S.
\end{align}
Using the bound on $\cK$ together with \eqref{est:U} the second term of the previous equality can be estimated as
\begin{align*}
    \Big\| \int\limits_0^T U(T,S)\cK(S)~\txtd S \Big\|\leq C_{T_0} \varepsilon^{2H-3\kappa-\delta^*} \leq C.
\end{align*}
In order to estimate the first term in~\eqref{var:const} one applies the
integration by parts formula 
\begin{align*}
    \varepsilon^{-\gamma} \int\limits_0^T U(T,S)\partial_S(\text{Res}_c(\psi(\varepsilon^{-2}S)))~\txtd S 
    &=\varepsilon^{-\gamma} \text{Res}_c(\psi(\varepsilon^{-2}T)) 
    - \varepsilon^{-\gamma} U(T,0)\text{Res}_c(\psi(0)) \\
    & \qquad + \varepsilon^{-\gamma} \int\limits_0^T U(T,S)(A_c + B_a(S))\text{Res}_c(\psi(\varepsilon^{-2}S)))~\txtd S \\
    & \leq C\varepsilon^{-4\kappa},
\end{align*}
where we combined the bounds on $A_c+B_a(T)$ together with~\eqref{est:U} and the bound on the residual by $\varepsilon^\gamma$. 
In conclusion,
we infer due to Lemma~\ref{pc:res} that 
\[
\|R_c(t)\|\leq C_{T_0}\varepsilon^{-4\kappa} <\frac12 \varepsilon^{-\delta^*}
    \quad\text{ for }t\leq\min\{ \tau_R, \varepsilon^{-2}T_0 \}.
\]    
    By the bound on $R_s$, the continuity of $R$ and the definition of $\tau_R$, this implies that $\tau_R\geq \varepsilon^{-2}T_0$. We thus finished the proof.
\qed\end{proof}


\appendix
\section{H\"older type-estimates of deterministic convolutions}\label{appendix:a}
We provide several possibilities to estimate the convolution \begin{align*}
 \Bigg\|t\mapsto \int\limits_0^t e^{(t-s)L\varepsilon^{-2}} f(s)~\txtd s\Bigg\|_{C^{\alpha}([0,T];X)},
\end{align*} 
where in our applications $f(s):=P_s W(s\varepsilon^{-2})$ and $f(0)=0$. 
There are several optimal regularity results for such convolutions ~\cite{Sinestrari}, but we provide the computations because we need the precise dependence on $\varepsilon$.

\begin{lemma}\label{est:h1}
For $\alpha\in[0,1)$ and $f\in C([0,T];X)$
	we have 
	\begin{align}\label{h1}
	\Bigg\|t\mapsto \int\limits_0^t e^{(t-s)L\varepsilon^{-2}} f(s)~\txtd s\Bigg\|_{C^{\alpha}([0,T];X)} \leq C \varepsilon^{2-2\alpha}\|f\|_{C([0,T];X)}.
	\end{align}
\end{lemma}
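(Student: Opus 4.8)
The plan is to estimate separately the sup-norm and the $\alpha$-H\"older seminorm of
\[
g_\varepsilon(t):=\int_0^t e^{(t-s)L\varepsilon^{-2}}f(s)~\txtd s,
\]
keeping careful track of the powers of $\varepsilon$; throughout I use that $f$ takes values in $\cS$ (as in all our applications, where $f(s)=P_sW(s\varepsilon^{-2})$), equivalently that $P_s$ may be inserted, so that \eqref{stability:s} and \eqref{fr:power:s} apply. First I would record two pointwise bounds obtained by the substitution $\sigma=(t-s)\varepsilon^{-2}$. From \eqref{stability:s},
\[
\|g_\varepsilon(t)\|\le M\|f\|_{C([0,T];X)}\int_0^t e^{-(t-s)\mu\varepsilon^{-2}}~\txtd s=\frac{M\varepsilon^2}{\mu}\bigl(1-e^{-t\mu\varepsilon^{-2}}\bigr)\|f\|_{C([0,T];X)}\le C\varepsilon^2\|f\|_{C([0,T];X)},
\]
which already gives the $C^0$-part of \eqref{h1}. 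From \eqref{fr:power:s} and self-adjointness of $L$ one has $\|(\Id-L)^\alpha e^{\tau L}P_s\|_{\cL(X)}\le \tilde M(1+\tau^{-\alpha})e^{-\tau\tilde\mu}$, so the same substitution yields
\[
\|g_\varepsilon(t)\|_{X^\alpha}\le \tilde M\varepsilon^2\|f\|_{C([0,T];X)}\int_0^{t\varepsilon^{-2}}(1+\sigma^{-\alpha})e^{-\sigma\tilde\mu}~\txtd\sigma\le C\varepsilon^2\|f\|_{C([0,T];X)},
\]
the last integral being finite precisely because $\alpha<1$ (this is the only place $\alpha<1$ enters).

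For the H\"older seminorm fix $0\le s<t\le T$ and use the identity
\[
g_\varepsilon(t)-g_\varepsilon(s)=\int_s^t e^{(t-r)L\varepsilon^{-2}}f(r)~\txtd r+\bigl(e^{(t-s)L\varepsilon^{-2}}-\Id\bigr)g_\varepsilon(s).
\]
The first ("diagonal") term is bounded, exactly as above, by $\tfrac{M}{\mu}\varepsilon^2\min\{1,(t-s)\mu\varepsilon^{-2}\}\|f\|_{C([0,T];X)}$, and since $\min\{1,x\}\le x^\alpha$ this is at most $C\varepsilon^{2-2\alpha}(t-s)^\alpha\|f\|_{C([0,T];X)}$. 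For the second term I would combine the elementary spectral bound $\|(e^{\tau L}-\Id)(\Id-L)^{-\alpha}\|_{\cL(X)}=\sup_{\lambda\ge0}\frac{1-e^{-\tau\lambda}}{(1+\lambda)^\alpha}\le\min\{1,\tau^\alpha\}$ (valid since $L$ is self-adjoint and non-positive and $\alpha\le1$) with the $X^\alpha$-bound on $g_\varepsilon(s)$ from the first paragraph, obtaining $\|(e^{(t-s)L\varepsilon^{-2}}-\Id)g_\varepsilon(s)\|\le C\min\{1,((t-s)\varepsilon^{-2})^\alpha\}\varepsilon^2\|f\|_{C([0,T];X)}$. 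Dividing by $(t-s)^\alpha$ and distinguishing the cases $t-s\le\varepsilon^2$ (where the factor $((t-s)\varepsilon^{-2})^\alpha$ absorbs $(t-s)^\alpha$ and leaves $\varepsilon^{-2\alpha}$) and $t-s>\varepsilon^2$ (where the factor is $\le1$ and $(t-s)^{-\alpha}<\varepsilon^{-2\alpha}$) shows in both cases that this contribution is at most $C\varepsilon^{2-2\alpha}\|f\|_{C([0,T];X)}$.

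Adding the two contributions gives $[g_\varepsilon]_{C^\alpha([0,T];X)}\le C\varepsilon^{2-2\alpha}\|f\|_{C([0,T];X)}$, and combining this with the $C^0$-bound (and using $\varepsilon^2\le\varepsilon^{2-2\alpha}$ for $\varepsilon\le1$) yields \eqref{h1}. I expect the main obstacle to be the H\"older seminorm: one has to balance the non-smoothing diagonal term against the smoothing term $(e^{(t-s)L\varepsilon^{-2}}-\Id)g_\varepsilon(s)$, treat short and long time lags $t-s$ uniformly via the $\min\{1,\cdot\}$ truncations, and crucially exploit $\alpha<1$ so that the rescaled $X^\alpha$-regularity integral converges; the remainder is bookkeeping of constants and substitutions.
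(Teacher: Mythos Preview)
Your proof is correct and follows essentially the same route as the paper: both use the splitting
\[
g_\varepsilon(t)-g_\varepsilon(s)=\int_s^t e^{(t-r)L\varepsilon^{-2}}f(r)\,\txtd r+(e^{(t-s)L\varepsilon^{-2}}-\Id)g_\varepsilon(s),
\]
bound the first term via exponential stability and $\min\{1,x\}\le x^\alpha$, and bound the second by pairing the $X^\alpha$-smoothing estimate for $g_\varepsilon(s)$ with the $(\tau\varepsilon^{-2})^\alpha$ bound on $e^{\tau L\varepsilon^{-2}}-\Id$. The only cosmetic differences are that the paper computes the $X^\alpha$-norm of $g_\varepsilon$ inline rather than as a separate preliminary estimate, and does not make the $\min\{1,\cdot\}$ case distinction explicit; your observation that $f$ takes values in $\cS$ (so that \eqref{stability:s} applies) is also implicit in the paper's argument.
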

\begin{proof}
The case $\alpha=0$ is trivial, so we only focus on the case 
 $\alpha\in(0,1)$.
 
	Let $t,\tau\in[0,T]$ such that $t+\tau\leq T.$ We compute
	\begin{align*}
	&\int\limits_0^{t+\tau} e^{(t+\tau-s)L\varepsilon^{-2}} f(s)~\txtd s - \int\limits_0 ^{t} e^{(t-s)L\varepsilon^{-2}}f(s)~\txtd s\\
	& = \int\limits_t^{t+\tau} e^{(t+\tau-s)L\varepsilon^{-2}} f(s)~\txtd s + (e^{\tau L\varepsilon^{-2}}-\mbox{Id}) \int\limits_0^t e^{(t-s)L\varepsilon^{-2}} f(s)~\txtd s\\
	& = I_1 + I_2.
	\end{align*}
	We estimate
	\begin{align*}
	\|I_1\| &\leq M \|f\|_{C([0,T];X)} \int\limits_t^{t+\tau} e^{-\mu(t+\tau-s)\varepsilon^{-2}}~\txtd s\\
	& \leq M \varepsilon^2 \|f\|_{C([0,T];X)}  \int\limits_0^{\tau\varepsilon^{-2}}e^{-\mu y }~\txtd y \\
	& \leq  M \varepsilon^2 \min\{\mu^{-1}, \tau\varepsilon^{-2}\} \|f\|_{C([0,T];X)} ,
	\end{align*}
	where we used the substitution $y=(\tau -s)\varepsilon^{-2}$. For the second term we have
	\begin{align*}
	\|I_2\| &\leq \| e^{\tau L \varepsilon^{-2}} -\mbox{Id}\|_{\cL(X^{\alpha},X)} \Big\| \int\limits_0^t e^{(t-s)L\varepsilon^{-2}}f(s)~\txtd s \Big\|_{X^{\alpha}}\\
	& \leq M \|f\|_{C([0,T];X)} (\tau\varepsilon^{-2})^{\alpha} \int\limits_0^t (t-s)^{-\alpha}\varepsilon^{2\alpha} e^{-\mu (t-s)\varepsilon^{-2}}~\txtd s\\
	& \leq M  \|f\|_{C([0,T];X)} \tau^{\alpha}\varepsilon^{2-2\alpha}\int\limits_0^{t\varepsilon^{-2}} y ^{-\alpha} e^{-\mu y}~\txtd y.
	\end{align*}
	Combining these two estimates proves the statement. \qed
	\end{proof}
	
We can improve the order in $\varepsilon$ of the bound, if we consider H\"older-continuous $f$.
	
\begin{lemma}\label{est:h2}
	For $\alpha\in(0,1)$  and $f\in  C^{\alpha}([0,T];X)$ with $f(0)=0$ we have
	\begin{align*}
	\Bigg\|t\mapsto \int\limits_0^t e^{(t-s)L\varepsilon^{-2}} f(s)~\txtd s\Bigg\|_{C^{\alpha}([0,T];X)}\leq C \varepsilon^2 \|f\|_{C^{\alpha}([0,T];X)}.
	\end{align*}
\end{lemma}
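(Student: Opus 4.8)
The plan is to mimic the proof of Lemma~\ref{est:h1}, splitting the increment of the convolution into the same two pieces $I_1$ and $I_2$, but exploiting the extra cancellation available from $f(0)=0$ and H\"older continuity. As before, for $t,\tau\in[0,T]$ with $t+\tau\leq T$ we write
\begin{align*}
&\int\limits_0^{t+\tau} e^{(t+\tau-s)L\varepsilon^{-2}} f(s)~\txtd s - \int\limits_0^{t} e^{(t-s)L\varepsilon^{-2}} f(s)~\txtd s
= \underbrace{\int\limits_t^{t+\tau} e^{(t+\tau-s)L\varepsilon^{-2}} f(s)~\txtd s}_{I_1} + \underbrace{(e^{\tau L\varepsilon^{-2}}-\mathrm{Id})\int\limits_0^t e^{(t-s)L\varepsilon^{-2}} f(s)~\txtd s}_{I_2}.
\end{align*}

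First I would treat $I_1$. On the interval $[t,t+\tau]$ we bound $\|f(s)\|=\|f(s)-f(0)\|\leq \|f\|_{C^\alpha}s^\alpha\leq \|f\|_{C^\alpha}(t+\tau)^\alpha\leq C_T\|f\|_{C^\alpha}$, but more importantly, since the integration variable ranges over a window of length $\tau$ and the semigroup on $X$ is merely a contraction-type bound $\|e^{rL\varepsilon^{-2}}\|\leq M$, a direct estimate gives $\|I_1\|\leq M\tau\sup_{s\in[t,t+\tau]}\|f(s)\|$. Combining this with the crude bound $\tau\leq T^{1-\alpha}\tau^\alpha$ and, alternatively, using the exponential decay $e^{-\mu(t+\tau-s)\varepsilon^{-2}}$ to extract a factor $\varepsilon^2$ as in Lemma~\ref{est:h1}, I would obtain $\|I_1\|\leq C\varepsilon^2\tau^\alpha\|f\|_{C^\alpha([0,T];X)}$ after balancing the two available estimates (use the $\varepsilon^2$-gaining one when $\tau\varepsilon^{-2}$ is large, the elementary one when it is small).

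For $I_2$ the key trick is to insert $f(t)$: write $\int_0^t e^{(t-s)L\varepsilon^{-2}}f(s)\,\txtd s = \int_0^t e^{(t-s)L\varepsilon^{-2}}(f(s)-f(t))\,\txtd s + \int_0^t e^{(t-s)L\varepsilon^{-2}}f(t)\,\txtd s$. For the first of these, $\|f(s)-f(t)\|\leq \|f\|_{C^\alpha}(t-s)^\alpha$, so applying $\|(e^{\tau L\varepsilon^{-2}}-\mathrm{Id})\|_{\cL(X^\theta,X)}\leq C(\tau\varepsilon^{-2})^\theta$ for a suitable $\theta$ together with the smoothing bound $\|e^{(t-s)L\varepsilon^{-2}}\|_{\cL(X,X^\theta)}\leq C(t-s)^{-\theta}\varepsilon^{2\theta}e^{-\mu(t-s)\varepsilon^{-2}}$ and integrating $(t-s)^{\alpha-\theta}$ against the decaying exponential yields, after the substitution $y=(t-s)\varepsilon^{-2}$, a bound of the form $C\varepsilon^{2}\tau^\theta\cdot\varepsilon^{2(\alpha-\theta)}\|f\|_{C^\alpha}$; choosing $\theta=\alpha$ (legitimate since $\alpha<1$) collapses this to $C\varepsilon^2\tau^\alpha\|f\|_{C^\alpha}$. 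For the second piece, $\int_0^t e^{(t-s)L\varepsilon^{-2}}f(t)\,\txtd s = (L\varepsilon^{-2})^{-1}(e^{tL\varepsilon^{-2}}-\mathrm{Id})f(t)$ on $\cS$ after the projection (note $f=P_sW(\cdot\varepsilon^{-2})$ lives in $\cS$ where $L$ is invertible), so that $(e^{\tau L\varepsilon^{-2}}-\mathrm{Id})$ times it is controlled by $\|(e^{\tau L\varepsilon^{-2}}-\mathrm{Id})\|_{\cL(X^\alpha,X)}\|(L\varepsilon^{-2})^{-1}\|_{\cL(X,X^\alpha)}\|f(t)\|\leq C(\tau\varepsilon^{-2})^\alpha\varepsilon^{2}\|f\|_{C^\alpha}\cdot$(using $f(t)=f(t)-f(0)$ is not needed here, the $\varepsilon^2$ comes from $(L\varepsilon^{-2})^{-1}$). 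Collecting all contributions gives $\|I_1\|+\|I_2\|\leq C\varepsilon^2\tau^\alpha\|f\|_{C^\alpha([0,T];X)}$, and since the sup-norm part of the $C^\alpha$-norm is handled by the $\alpha=0$-type estimate from Lemma~\ref{est:h1} (which already gives $\varepsilon^{2-2\alpha}\geq\varepsilon^2$, hence in particular $O(\varepsilon^2)$ is false — so one must instead estimate the sup-norm directly using $f(0)=0$: $\|\int_0^t e^{(t-s)L\varepsilon^{-2}}f(s)\,\txtd s\|\leq\int_0^t Me^{-\mu(t-s)\varepsilon^{-2}}\|f\|_{C^\alpha}s^\alpha\,\txtd s\leq C\varepsilon^2\|f\|_{C^\alpha}$), the claim follows.

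The main obstacle I anticipate is bookkeeping the exponent of $\varepsilon$ cleanly in the $I_2$ term: one must make sure that every factor $(\tau\varepsilon^{-2})^\theta$ coming from $(e^{\tau L\varepsilon^{-2}}-\mathrm{Id})$ is matched by a compensating $\varepsilon^{2\theta}$ from the smoothing of the semigroup, leaving a clean $\tau^\theta$ and an overall $\varepsilon^2$ rather than the $\varepsilon^{2-2\alpha}$ of Lemma~\ref{est:h1}; the gain of $\varepsilon^{2}$ over $\varepsilon^{2-2\alpha}$ is precisely what the hypothesis $f(0)=0$ buys, via the insertion of $f(t)$ (or $f(0)$) that converts a bare $\|f\|_{C^0}$ into a factor $(t-s)^\alpha$ that can be integrated against $(t-s)^{-\theta}e^{-\mu(t-s)\varepsilon^{-2}}$ to produce the additional $\varepsilon^{2\alpha}$. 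Everything else is the same substitution-and-integrate routine as in the previous lemma.
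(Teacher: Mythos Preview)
Your decomposition has a genuine gap: neither your $I_1$ estimate nor your estimate of the second piece of $I_2$ actually produces the factor $\varepsilon^2\tau^\alpha$ you claim.

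For $I_1$, the two estimates you balance are $\|I_1\|\leq C\tau\|f\|_{C^0}$ and $\|I_1\|\leq C\varepsilon^2\|f\|_{C^0}$, hence $\|I_1\|\leq C\min\{\tau,\varepsilon^2\}\|f\|_{C^0}$. But $\min\{\tau,\varepsilon^2\}\leq \tau^{\alpha}\varepsilon^{2(1-\alpha)}$ is the best interpolation available, so you only recover $\varepsilon^{2-2\alpha}\tau^\alpha$, exactly the bound of Lemma~\ref{est:h1}. Concretely, if $\tau=\varepsilon^2$ your two bounds coincide and give $\varepsilon^2$, while the target $\varepsilon^2\tau^\alpha=\varepsilon^{2+2\alpha}$ is strictly smaller.

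For the second piece of $I_2$ you write the bound $C(\tau\varepsilon^{-2})^\alpha\varepsilon^2\|f\|_{C^\alpha}$ and treat it as if it were $C\varepsilon^2\tau^\alpha\|f\|_{C^\alpha}$; but $(\tau\varepsilon^{-2})^\alpha\varepsilon^2=\tau^\alpha\varepsilon^{2-2\alpha}$, so you are again short by $\varepsilon^{-2\alpha}$. The issue is precisely the one you flag in your last paragraph: the factor $(\tau\varepsilon^{-2})^\alpha$ from $e^{\tau L\varepsilon^{-2}}-\mathrm{Id}$ is \emph{not} compensated here, because $(L\varepsilon^{-2})^{-1}$ contributes a clean $\varepsilon^2$ but no extra $\varepsilon^{2\alpha}$.

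The paper avoids all of this by using a different split. Instead of writing the increment as $I_1+I_2$ with the same decomposition as in Lemma~\ref{est:h1}, it shifts the integration variable in $\int_0^{t+\tau}$ by $\tau$ to obtain
\[
\int_0^t e^{(t-s)L\varepsilon^{-2}}[f(s+\tau)-f(s)]\,\txtd s \;+\; \int_0^\tau e^{(t+\tau-s)L\varepsilon^{-2}}[f(s)-f(0)]\,\txtd s.
\]
Now the H\"older continuity of $f$ enters directly: the first integrand carries a factor $\tau^\alpha$, the second a factor $s^\alpha\leq\tau^\alpha$, and in both cases the remaining integral $\int e^{-\mu r\varepsilon^{-2}}\,\txtd r$ supplies $\varepsilon^2$. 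No fractional powers, no smoothing estimates, no balancing.

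Your route can in fact be repaired, but only by recombining your problematic pieces: if you add $I_1$ to the ``$f(t)$''-part of $I_2$ and simplify, the bad terms cancel and you are essentially led back to the paper's decomposition.
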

\begin{proof}
	Let $t,\tau\in[0,T]$ such that $t+\tau\leq T$. Since $f(0)=0$ we obtain 
	\begin{align*}
	&\int\limits_0^{t+\tau} e^{(t+\tau -s)L\varepsilon^{-2}} f(s)~\txtd s - \int\limits_0^t e^{(t-s)L\varepsilon^{-2}}f(s)~\txtd s\\
	& = \int\limits_0^t e^{(t-s)L\varepsilon^{-2}} [f(s+\tau) -f(s) ]~\txtd s + \int\limits_0^{\tau} e^{(t+\tau -s) L\varepsilon^{-2}} [f(s)-f(0)]~\txtd s := I_1 + I_2.
	\end{align*}
	This further results in 
	\begin{align*}
	\|I_1\| &\leq M \int\limits_0^t e^{-\mu(t-s)\varepsilon{^{-2}}} \tau ^{\alpha} \|f\|_{\alpha}~\txtd s \leq M \tau ^{\alpha} \|f\|_{\alpha} \int\limits_0^t e^{-\mu(t-s)\varepsilon^{-2}}~\txtd s\\
	& \leq M \varepsilon^{2}\tau^{\alpha} \|f\|_{\alpha} \int\limits_0^{t\varepsilon^{-2}} e^{-\mu y}~\txtd y 
	\end{align*}
	and 
	\begin{align*}
	\|I_2\|\leq & M\int\limits_0^{\tau} e^{-\mu\varepsilon^{-2}(t+\tau -s )} s^{\alpha} \|f\|_{\alpha}~\txtd s \leq M \tau^{\alpha}\|f\|_{\alpha} \int\limits_0^{\tau} e^{-\mu\varepsilon{^{-2}}(t+\tau -s )}~\txtd s\\
	& \leq M\tau^{\alpha} \varepsilon^2 \|f\|_{\alpha} e^{-\mu \varepsilon^{-2}t} \int\limits_0^{\tau\varepsilon^{-2}} e^{-\mu y}~\txtd y ,
	\end{align*}
	which immediately proves the statement.
	\qed
	\end{proof}

Now we combine the two previous Lemmas by interpolation to bound 
	\[
	(*):=\Big\|t\mapsto \int\limits_0^t e^{(t-s)L\varepsilon^{-2}} f(s)~\txtd s\Big\|_{C^{\alpha}([0,T];X)} \quad \text{
by}\quad  	\|f\|_{C^{\alpha-\gamma}([0,T];X)}.
	\]
Choose for $0<\gamma<\alpha<\beta<1$ 
\[
p=\frac{\beta-\gamma}{\beta-\alpha}
\quad\text{and}\quad
q=\frac{\beta-\gamma}{\alpha-\gamma}
\quad\text{with}\quad
\frac1p+\frac1q=1
\quad\text{and}\quad
\frac{\gamma}{p}+\frac{\beta}q= \alpha.
\]
Thus the  the $\alpha$-H\"older norm is bounded now by (first interpolation, then Lemma \ref{est:h1} and \ref{est:h2})
\begin{align*}
\|(*)\|_{C^\alpha([0,T],X)} 
&\leq C \|(*)\|_{C^\beta([0,T],X)}^{1/q} \|(*)\|_{C^\gamma([0,T],X)}^{1/p} \\
& \leq C
\left(	\varepsilon^{2-2\beta}\|f\|_{C^0} \right)^{1/q}
\left(	\varepsilon^2\|f\|_{C^\gamma}\right)^{1/p}  \\
&\leq C
\varepsilon^{2(\frac1p+(1-\beta)\frac1q)}\|f\|_{C^\gamma} = C
\varepsilon^{2(1-\frac{\beta}q)}\|f\|_{C^\gamma}.
\end{align*}
Now we can maximize the exponent by setting $\beta=1$ to obtain
\[
2\Big(1-\frac{\beta}q\Big)=
2\Big(1-\frac{\alpha-\gamma}{1-\gamma}\Big)=2\frac{1-\alpha}{1-\gamma}.
\]
Obviously, we need $\beta<1$, so we can only get arbitrarily close to the maximum.
We proved

\begin{lemma}\label{est:h3}
	For $\alpha,\gamma\in[0,1)$ with $\alpha\geq\gamma$  and $f\in  C^{\gamma}([0,T];X)$ with $f(0)=0$ we have
	for any $0\leq\zeta<2(1-\alpha)/(1-\gamma)$ 
	a constant $C_\zeta>0$ such that
	\begin{align*}
	\Big\|t\mapsto \int\limits_0^t e^{(t-s)L\varepsilon^{-2}} f(s)~\txtd s\Big\|_{C^{\alpha}([0,T];X)}\leq C_\zeta \varepsilon^{\zeta} \|f\|_{C^{\gamma}([0,T];X)}.
	\end{align*}
\end{lemma}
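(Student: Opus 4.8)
The plan is to obtain Lemma~\ref{est:h3} purely by interpolating between the two estimates already proved in Lemmas~\ref{est:h1} and~\ref{est:h2}; this is precisely the computation sketched before the statement, which I would carry out as follows. Write $g_\varepsilon(t):=\int_0^t e^{(t-s)L\varepsilon^{-2}}f(s)~\txtd s$. From Lemma~\ref{est:h1}, used with H\"older exponent $\beta\in(0,1)$ in place of $\alpha$, one has
\[
\|g_\varepsilon\|_{C^\beta([0,T];X)}\le C\,\varepsilon^{2-2\beta}\,\|f\|_{C^0([0,T];X)},
\]
and from Lemma~\ref{est:h2}, used with exponent $\gamma$ (this is where the hypothesis $f(0)=0$ enters),
\[
\|g_\varepsilon\|_{C^\gamma([0,T];X)}\le C\,\varepsilon^{2}\,\|f\|_{C^\gamma([0,T];X)}.
\]

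Assume first $\gamma<\alpha$ and fix $\beta$ with $\alpha<\beta<1$. The standard interpolation inequality for H\"older spaces gives
\[
\|g_\varepsilon\|_{C^\alpha}\le C\,\|g_\varepsilon\|_{C^\beta}^{1/q}\,\|g_\varepsilon\|_{C^\gamma}^{1/p},\qquad
\frac1p=\frac{\beta-\alpha}{\beta-\gamma},\quad \frac1q=\frac{\alpha-\gamma}{\beta-\gamma},
\]
which is consistent with the relations $\tfrac1p+\tfrac1q=1$ and $\tfrac{\gamma}{p}+\tfrac{\beta}{q}=\alpha$ noted above. Inserting the two bounds and using $\|f\|_{C^0}\le\|f\|_{C^\gamma}$ yields
\[
\|g_\varepsilon\|_{C^\alpha([0,T];X)}\le C\,\varepsilon^{(2-2\beta)/q+2/p}\,\|f\|_{C^\gamma([0,T];X)}
= C\,\varepsilon^{2(1-\beta/q)}\,\|f\|_{C^\gamma([0,T];X)},
\]
since $\tfrac2p+\tfrac2q-\tfrac{2\beta}{q}=2-\tfrac{2\beta}{q}$. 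A short computation shows that $2(1-\beta/q)=2\bigl(1-\tfrac{\beta(\alpha-\gamma)}{\beta-\gamma}\bigr)$ is increasing in $\beta$ and tends to $2\tfrac{1-\alpha}{1-\gamma}$ as $\beta\uparrow1$. Hence, given $\zeta<2(1-\alpha)/(1-\gamma)$, I would pick $\beta<1$ close enough to $1$ that $2(1-\beta/q)\ge\zeta$; since $\varepsilon\in(0,1]$ this gives $\varepsilon^{2(1-\beta/q)}\le\varepsilon^\zeta$, and the lemma follows with $C_\zeta$ the constant so produced.

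The degenerate cases need no interpolation: if $\alpha=\gamma$ then Lemma~\ref{est:h2} already delivers $\|g_\varepsilon\|_{C^\alpha}\le C\varepsilon^2\|f\|_{C^\gamma}$ and $\varepsilon^2\le\varepsilon^\zeta$ for any $\zeta<2=2(1-\alpha)/(1-\gamma)$, while $\alpha=0$ forces $\gamma=0$ and is covered directly by Lemma~\ref{est:h1}. I do not expect a genuine obstacle here; the only subtlety is that $\beta$ must stay strictly below $1$ for Lemma~\ref{est:h1} to apply, which is exactly why the optimal exponent $2(1-\alpha)/(1-\gamma)$ is approached but not attained, forcing the strict inequality on $\zeta$ in the statement. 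I would also double-check that the interpolation inequality is invoked for the full H\"older norms (sup-norm part included), which is legitimate since all three norms appearing on the right control $\sup_t\|g_\varepsilon(t)\|$.
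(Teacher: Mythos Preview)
Your proposal is correct and follows exactly the same route as the paper: interpolate the $C^\alpha$-norm of the convolution between the $C^\beta$-bound of Lemma~\ref{est:h1} and the $C^\gamma$-bound of Lemma~\ref{est:h2}, with the same choice of $p,q$ and the same exponent computation $2(1-\beta/q)\to 2(1-\alpha)/(1-\gamma)$ as $\beta\uparrow1$. Your explicit treatment of the boundary cases $\alpha=\gamma$ and $\alpha=0$ is a welcome addition that the paper leaves implicit.
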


\begin{remark} We use this result in Lemma~\ref{pc:res} to prove an optimal bound for the residual on the slow modes for small values of the Hurst parameter. For the Brownian motion, a direct interpolation of suitable H\"older norms suffices for the proof of Lemma~\ref{pc:res}. 
\end{remark}
\section{On the stochastic convolution with fractional noise}\label{stoch:conv}
%

We let $K>0$ and $T>0$ and provide an estimate for the probability $\mathbb{P}(\sup\limits_{t\in[0,T]} \|P_sW_L(t)\|>K )$, which is used in Section~\ref{s:mainresults} (see Remark~\ref{rem:SC}) for $T=T_0\varepsilon^{-2}$ and $K=\varepsilon^{-\kappa}$, for an arbitrarily small $\kappa> 0$ and $T_0>0$.  
\begin{remark}
For $H=\frac{1}{2}$ such estimates are well-known and immediately follow using Chebyshev's inequality and the factorization lemma~\cite{DaPratoZ}. In this case for all $p\geq 1$ one obtains
\begin{align*}
\mathbb{P}(\sup\limits_{t\in[0,T]} \|P_sW_L(t)\|)> K )\leq C_p K^{-p} T.
\end{align*}
Choosing $T=T_0\varepsilon^{-2}$ and $K=\varepsilon^{-k}$, this further leads to
$$\mathbb{P}(\sup\limits_{t\in[0,T_0\varepsilon^{-2}]} \|P_sW_L(t)\|>\varepsilon^{-\kappa} ) \leq C_p (\varepsilon^{\kappa p})T_0\varepsilon^{-2}.$$
Consequently, since $p$ can be taken arbitrarily large, in order to make this probability small for $H=\frac{1}{2}$, we need a small $\kappa>0$, as pointed out in Remark~\ref{k:rem}.
\end{remark}
In order to prove similar statements for the fractional Brownian motion we consider separately the cases $H\in(\frac{1}{2},1)$ and $H\in(0,\frac{1}{2})$. 
We recall that $(W^H(t))_{t\geq 0}$ is a trace-class fractional Brownian motion, i.e.
\begin{align*}
    W^H(t)=\sum\limits_{k=1}^{\infty}\sqrt{q_k} \beta^H_k(t)e_k,
\end{align*}
where $\text{tr}~Q=\sum\limits_{k=1}^{\infty}q_k<\infty$, $\{\beta^H_k(\cdot)\}_{k\geq 1}$ are independent standard fractional Brownian motions and $\{e_k\}_{k\geq 1}$ is an orthonormal basis of $X$. 

Further we impose the stronger assumption that  $\{e_k\}_{k\geq 1}$ are also eigenfunctions of $L$ and $\{\lambda_k\}_{k\geq 1}$ the corresponding eigenvalues.
We believe that one can prove the result without that assumption, but for simplicity of presentation, we assume this here.

Analogously to the Brownian case, we rely on the celebrated factorization method and therefore
fix $\alpha\in(0,H)$ and introduce the stochastic process
\begin{align*} 
    Y(t)&:=P_s \int\limits_0^t (t-r)^{-\alpha} e^{(t-r)L}~\txtd W^H(r)\\
    & =\sum\limits_{k=1}^{\infty} \sqrt{q_k} \int\limits_0^t (t-r)^{-\alpha} e^{(t-r)\lambda_k}~\txtd \beta^H_k(r) e_k.
\end{align*}
Now the stochastic convolution can be written as
\begin{align*}
P_s W_L(t) =\frac{\sin \pi\alpha}{\pi} \int\limits_0^t (t-r)^{\alpha-1} P_s e^{(t-r)L}Y(r)~\txtd r,
\end{align*}
see~\cite{DuncanMaslowski} for $H>1/2$ respectively~\cite{TTV} for $H<1/2$.

Due to Chebyshev's and H\"older inequality and the exponential stability of the semigroup $(e^{tL})_{t\geq 0}$ on $\cS$, we have for $p\geq 1$ that
\begin{align*}
\P(\sup\limits_{t\in[0,T_0]} \|P_s W_L(t)\|>K )& \leq  C_pK^{-p} \E( \sup\limits_{t\in[0,T_0]} \|P_sW_L(t)\|^p )\\
& = C_pK^{-p} \E\sup\limits_{t\in[0,T_0]} \Bigg\| \frac{\sin \pi\alpha}{\pi} \int\limits_0^t (t-r)^{\alpha-1} P_s e^{(t-r)L}Y(r)~\txtd r   \Bigg\|^p\\
& \leq C_pK^{-p} \E \sup\limits_{t\in[0,T_0]}\Bigg[ \Bigg( \int\limits_0^t r^{(\alpha-1)\frac{p}{p-1}}e^{-\mu r\frac{p}{p-1}}   ~\txtd r\Bigg)^{p-1} \int\limits_0^{t}\|Y(r)\|^p~\txtd r\Bigg]\\
& \leq C_p K^{-p} \int\limits_0^{T_0} \E\|Y(r)\|^p~\txtd r\\
& \leq C_p K^{-p} T_0 \sup\limits_{t\in[0,T_0]}\E\|Y(t)\|^p.
\end{align*}
Due to Gaussianity, it is enough to compute only the second moment of $Y$. To this aim we treat the cases $H>\frac{1}{2}$ and $H<\frac{1}{2}$ separately. For the Brownian motion, it is well-known that $\sup\limits_{t\in[0,T_0]}E\|Y(t)\|^2 \leq \widetilde{C}$, where the constant $\widetilde{C}$ is universal and does not depend on time.

\begin{lemma}\label{conv1} Let $H\in(\frac{1}{2},1)$. Then 
\begin{align*}
\E\|Y(t)\|^2 \leq \widetilde{C}(H), ~\text{ for all } t>0.
\end{align*}
    \end{lemma}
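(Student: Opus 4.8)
The idea is to turn the claim into the finiteness of a single deterministic double integral, using the It\^o-type isometry for Wiener integrals against an fBm with $H>1/2$ recalled in Section~\ref{s:fbm}. First I would expand $Y(t)$ in the orthonormal basis and use that the $\beta^H_k$ are independent and centered, so that the cross terms vanish and
\[
\E\|Y(t)\|^2=\sum_{k\geq1}q_k\,\E\Big|\int_0^t(t-r)^{-\alpha}e^{(t-r)\lambda_k}~\txtd\beta^H_k(r)\Big|^2 .
\]
For each stable mode (i.e.\ $e_k\in\cS$, so $\lambda_k\leq-\mu<0$ by Assumption~\ref{ass:L}) I would then apply the covariance representation valid for $H>1/2$, which for a deterministic integrand $f$ gives $\E\big|\int_0^tf~\txtd\beta^H\big|^2=H(2H-1)\int_0^t\int_0^tf(u)f(v)|u-v|^{2H-2}~\txtd u~\txtd v$, taken with $f(r)=(t-r)^{-\alpha}e^{(t-r)\lambda_k}$. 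Substituting $u\mapsto t-u$, $v\mapsto t-v$ and bounding $e^{(t-u)\lambda_k}\leq e^{-\mu u}$ uniformly in $k$, the $k$-th term is bounded, uniformly in $k$ and in $t>0$, by
\[
H(2H-1)\int_0^\infty\int_0^\infty u^{-\alpha}v^{-\alpha}e^{-\mu(u+v)}|u-v|^{2H-2}~\txtd u~\txtd v .
\]

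The second step is to verify that this double integral is finite. Its integrand is smooth away from $\{u=0\}\cup\{v=0\}\cup\{u=v\}$, and the factor $e^{-\mu(u+v)}$ makes the behaviour at infinity harmless, so only these singular sets matter. Along the diagonal, away from the origin, $|u-v|^{2H-2}$ is locally integrable in one variable since $2H-2>-1$ when $H>1/2$. Near the origin a scaling argument (write $u=s\rho$, $v=(1-s)\rho$ with $s\in[0,1]$, $\rho>0$, Jacobian $\rho$) reduces the question to the integrability near $\rho=0$ of $\rho^{2H-2\alpha-1}$, which holds precisely because $\alpha<H$, together with $\int_0^1 s^{-\alpha}(1-s)^{-\alpha}|2s-1|^{2H-2}~\txtd s<\infty$, which holds because $\alpha<1$ and $2H-2>-1$. (Equivalently, rescaling $u=w/\mu$, $v=z/\mu$ reduces to the case $\mu=1$ and pulls out a harmless factor $\mu^{2\alpha-2H}$.) Hence the double integral equals some finite constant $C(H)$ depending only on $H$ (and on $\alpha,\mu$).

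Finally, since the bound from the first step does not depend on $k$, summing over $k$ yields
\[
\E\|Y(t)\|^2\leq H(2H-1)\,C(H)\sum_{k\geq1}q_k=H(2H-1)\,C(H)\,\mathrm{tr}\,Q=:\widetilde C(H)\qquad\text{for all }t>0,
\]
which is the assertion. The only genuinely delicate point is the convergence of the double integral at the origin, where the three singularities $u^{-\alpha}$, $v^{-\alpha}$ and $|u-v|^{2H-2}$ interact; this is exactly what pins down the admissible range $\alpha\in(0,H)$ in the factorization, and everything else is routine.
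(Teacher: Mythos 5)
Your proposal is correct and follows essentially the same route as the paper: expand in the eigenbasis, apply the covariance representation $\E\big|\int f\,\txtd\beta^H\big|^2=H(2H-1)\iint f(u)f(v)|u-v|^{2H-2}\,\txtd u\,\txtd v$ termwise, use the exponential stability on $\cS$ to get a $k$-independent bound, and reduce everything to the finiteness of one deterministic double integral with the condition $\alpha<H$, finally summing against $\mathrm{tr}\,Q$. The only (cosmetic) difference is how that integral is evaluated: the paper substitutes $w=u/v$ and recognizes a Beta function $B(1-\alpha,2H-1)$ times a Gamma-type integral, whereas you use the parametrization $u=s\rho$, $v=(1-s)\rho$ — both isolate the same singularity exponents and yield the same constant up to normalization.
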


\begin{proof}
We compute
\begin{align*}
    \E \|Y(t)\|^2& =\sum\limits_{k=1}^{\infty} q_k \E\Bigg[\int\limits_0^t (t-r)^{-\alpha} e^{(t-r)\lambda_k}~\txtd \beta^H_k(r) \Bigg]^2 \\
    & \leq C(H) \sum\limits_{k=1}^{\infty} q_k \int\limits_0^t \int\limits_0^t (t-u)^{-\alpha} (t-v)^{-\alpha} e^{(t-u)\lambda_k} e^{(t-v)\lambda_k} |u-v|^{2H-2}~\txtd u ~\txtd v,
\end{align*}
where we used~\cite[Section 3]{DuncanMaslowski} to estimate the stochastic convolution. We used $C(H):=H(2H-1)$.
Using the exponential stability on $\cS$ of $(e^{tL})_{t\geq 0}$, 
we evaluate for an arbitrary $\lambda>0$ the expression
\begin{align*}
    &\int\limits_0^t \int\limits_0^t u^{-\alpha} v^{-\alpha} e^{-\lambda u}e^{-\lambda v} |u-v|^{2H-2}~\txtd u~\txtd v\\
    & \leq 2 \int\limits_0^t \int\limits_0^v u^{-\alpha} v^{-\alpha} e^{-\lambda u}e^{-\lambda v }(v-u)^{2H-2}~\txtd u~\txtd v\\
    & = 2 \int\limits_0^t v^{-\alpha} e^{-\lambda v} \int\limits_0^v u^{-\alpha} \underbrace{e^{-\lambda u}}_{\leq 1} (v-u)^{2H-2}~\txtd u~\txtd v.
\end{align*}
We perform the substitution $w=\frac{u}{v}$ and further obtain
\begin{align*}
&\int\limits_0^t v^{-\alpha} e^{-\lambda v} \int\limits_0^1 (wv)^{-\alpha} (v-w v )^{2H-2} v~\txtd w~\txtd v\\
& = \int\limits_0^t v^{-2\alpha} e^{-\lambda v } \int\limits_0^1 w^{-\alpha} v^{2H-1}(1-w)^{2H-2}~\txtd w~\txtd v\\
& = \int\limits_0^t v^{-2\alpha} e^{-\lambda v } v^{2H-1} \underbrace{\int\limits_0^1 w^{-\alpha} (1-w)^{2H-2}~\txtd w}_{=B(-\alpha+1,2H-1)}\txtd v\\
& \leq B(-\alpha+1,2H-1) \int\limits_0^{\infty} v^{2(H-\alpha)-1} e^{-\lambda v}~\txtd v.
\end{align*}
Here $B(\cdot,\cdot)$ is Euler's Beta function and the last integral is well-defined due to the fact that $2H-2\alpha-1>-1$ since $\alpha<H$. Consequently
\begin{align*}
\E \|Y(t)\|^2 \leq C(H) B(-\alpha+1,2H-1) \text{tr}~Q.
\end{align*}
  \qed  
\end{proof}

For $H<\frac{1}{2}$ the estimates are more involved, compare~\cite{TTV,DuncanMaslowski2}. We first follow~\cite{TTV} to show a bound on $Y$ for $t$ away from $0$.

\begin{lemma}\label{conv2}
Let $H\in(0,\frac{1}{2})$. Then 
\begin{align*}
\E\|Y(t)\|^2 \leq \widetilde{C}(H),~\text{ for all } t\geq 1.
\end{align*}
\end{lemma}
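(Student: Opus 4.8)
The plan is to follow the scheme of Lemma~\ref{conv1}, but since for $H<\tfrac12$ the explicit covariance $H(2H-1)\int\!\!\int|u-v|^{2H-2}$ is no longer available, I would replace it by the representation of the Wiener integral against $\beta^H$ through the transfer operator $K_H^*$, as in~\cite{TTV,DuncanMaslowski2}. By Gaussianity and since $\mathrm{tr}\,Q=\sum_k q_k<\infty$, it is enough to bound, uniformly in $k$ and in $t\ge1$, the scalar second moment $\E[(\int_0^t \phi_k(r)\,\txtd\beta^H_k(r))^2]$, where $\phi_k(r):=(t-r)^{-\alpha}e^{(t-r)\lambda_k}$ and, by Assumption~\ref{ass:L}, the eigenvalues appearing in $Y$ satisfy $\lambda_k\le-\mu<0$. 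For $H<\tfrac12$ one has
\[
\E\Big[\Big(\int_0^t \phi_k\,\txtd\beta^H_k\Big)^2\Big]=\int_0^t|(K_H^*\phi_k)(s)|^2\,\txtd s,\qquad
(K_H^*\phi_k)(s)=K_H(t,s)\phi_k(s)+\int_s^t\big(\phi_k(r)-\phi_k(s)\big)\,\partial_rK_H(r,s)\,\txtd r,
\]
together with the classical kernel bounds $|K_H(t,s)|\le C_H\big[(t-s)^{H-\frac12}+s^{H-\frac12}\big]$ and $|\partial_rK_H(r,s)|\le C_H(r-s)^{H-\frac32}$ for $0<s<r<t$ (a routine truncation near $r=t$ makes the identity rigorous for the mildly singular $\phi_k$). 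Since $|(K_H^*\phi_k)(s)|^2\le 2|K_H(t,s)\phi_k(s)|^2+2\big|\int_s^t(\phi_k(r)-\phi_k(s))\partial_rK_H(r,s)\,\txtd r\big|^2$, I would treat the two pieces separately.

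\emph{Diagonal term.} Using the kernel bound and $e^{(t-s)\lambda_k}\le e^{-(t-s)\mu}$,
\[
\int_0^t|K_H(t,s)\phi_k(s)|^2\,\txtd s\le C_H\int_0^t\big[(t-s)^{2H-1}+s^{2H-1}\big](t-s)^{-2\alpha}e^{-2(t-s)\mu}\,\txtd s .
\]
The part with $(t-s)^{2H-1-2\alpha}e^{-2(t-s)\mu}$ is bounded by $\int_0^\infty\sigma^{2H-1-2\alpha}e^{-2\sigma\mu}\,\txtd\sigma<\infty$ uniformly in $t$, because $\alpha<H$. For the part with $s^{2H-1}$ I would split $[0,t]=[0,t/2]\cup[t/2,t]$: on $[0,t/2]$ one bounds $(t-s)^{-2\alpha}e^{-2(t-s)\mu}\le C\,t^{-2\alpha}e^{-t\mu}$ and $\int_0^{t/2}s^{2H-1}\,\txtd s\lesssim t^{2H}$, giving a contribution $\lesssim t^{2H-2\alpha}e^{-t\mu}\le C$; on $[t/2,t]$ the function $s^{2H-1}$ is decreasing, so $s^{2H-1}\le(t/2)^{2H-1}\le 2^{1-2H}$ \emph{precisely because $t\ge1$}, which reduces the integral to $\int_0^\infty\sigma^{-2\alpha}e^{-2\sigma\mu}\,\txtd\sigma<\infty$. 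This is the only point where the hypothesis $t\ge1$ enters; for $t<1$ the factor $t^{2H-1}$ is unbounded, which is why that regime is treated in a separate statement.

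\emph{Increment term.} This is the main obstacle and requires balancing the singularity of $\phi_k$ at $r=t$ against the singularity $(r-s)^{H-3/2}$ of $\partial_rK_H$ at $r=s$. I would split the inner integral at $(t+s)/2$. On $[s,(t+s)/2]$, $\phi_k$ is smooth with $|\phi_k'(u)|\le C(t-u)^{-\alpha-1}$ uniformly in $k$ — the factor $|\lambda_k|(t-u)e^{(t-u)\lambda_k}$ is absorbed using $\sup_{x\ge0}xe^{-x}<\infty$ — so $|\phi_k(r)-\phi_k(s)|\le C(r-s)(t-s)^{-\alpha-1}$ there, and $\int_s^{(t+s)/2}(r-s)^{H-1/2}\,\txtd r\lesssim(t-s)^{H+1/2}$ gives a contribution $\lesssim(t-s)^{H-1/2-\alpha}$. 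On $[(t+s)/2,t]$, where $r-s\ge(t-s)/2$, I would instead use $|\phi_k(r)-\phi_k(s)|\le(t-r)^{-\alpha}+(t-s)^{-\alpha}$ together with $(r-s)^{H-3/2}\lesssim(t-s)^{H-3/2}$ and $\int_{(t+s)/2}^t\big[(t-r)^{-\alpha}+(t-s)^{-\alpha}\big]\,\txtd r\lesssim(t-s)^{1-\alpha}$ (using $\alpha<1$), again giving $\lesssim(t-s)^{H-1/2-\alpha}$. Squaring, inserting $e^{-2(t-s)\mu}$ and integrating in $s$ produces $\int_0^\infty\sigma^{2H-1-2\alpha}e^{-2\sigma\mu}\,\txtd\sigma<\infty$, finite thanks to $\alpha<H<\tfrac12$ and uniform in $k$ and $t$. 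Combining the two estimates and summing against $q_k$ gives $\E\|Y(t)\|^2\le\widetilde C(H)$ for all $t\ge1$ (the constant depending also on $\mathrm{tr}\,Q$, $\mu$ and $\alpha$), which is the claim.
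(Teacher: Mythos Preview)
Your overall framework coincides with the paper's: both use the $K_H^*$-representation of the Wiener integral and split $(K_H^*\phi_k)(s)$ into the diagonal piece $K_H(t,s)\phi_k(s)$ and the increment integral. Your diagonal estimate (with the sum bound $|K_H(t,s)|\le C[(t-s)^{H-1/2}+s^{H-1/2}]$ and the split at $t/2$) is correct and slightly different from the paper, which uses instead the product bound $K(t,s)\le c(H)(t-s)^{H-1/2}s^{H-1/2}$ from~\cite{D}; both reach a uniform bound for $t\ge1$.

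The genuine gap is in the increment term. In your estimates on both subintervals you drop the exponential: you bound $|\phi_k'(u)|\le C(t-u)^{-\alpha-1}$ and $|\phi_k(r)-\phi_k(s)|\le (t-r)^{-\alpha}+(t-s)^{-\alpha}$, arriving at $\big|\int_s^t(\phi_k(r)-\phi_k(s))\partial_rK_H(r,s)\,\txtd r\big|\lesssim (t-s)^{H-1/2-\alpha}$ with \emph{no} decay in $t-s$. You then write ``squaring, inserting $e^{-2(t-s)\mu}$ and integrating''; but there is nothing to insert --- the exponential has been discarded. With your bound one only gets $\int_0^t(t-s)^{2H-1-2\alpha}\,\txtd s\sim t^{2H-2\alpha}$, which is not uniform in $t$, so the proof as written does not close.

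The repair is not hard but must be done explicitly. On $[s,(t+s)/2]$ keep the exponential in the derivative, $|\phi_k'(u)|\le C(t-u)^{-\alpha-1}e^{-\mu(t-u)/2}$ (use $xe^{-x}\le Ce^{-x/2}$), which yields a factor $e^{-\mu(t-s)/4}$. On $[(t+s)/2,t]$ the $\phi_k(s)$-part already carries $e^{-\mu(t-s)}$; for the $\phi_k(r)$-part, bound $(r-s)^{H-3/2}\le C(t-s)^{H-3/2}$ and use $\int_0^{(t-s)/2}\rho^{-\alpha}e^{-\mu\rho}\,\txtd\rho\le C$ for $t-s\ge1$, giving the contribution $(t-s)^{H-3/2}$ there (and your $(t-s)^{H-1/2-\alpha}$ for $t-s\le1$). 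Both $(t-s)^{2H-1-2\alpha}$ on $\{t-s\le1\}$ and $(t-s)^{2H-3}$ on $\{t-s\ge1\}$ are integrable, which gives the uniform bound. The paper avoids this by rescaling $u\mapsto\lambda u$ in the increment term and then citing~\cite[Theorem~3 and Appendix]{TTV} for the finiteness of the resulting $t$-independent integral; your direct route is more self-contained once the decay is tracked correctly.
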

 
\begin{proof}
We estimate the second moment as follows
\begin{align*}
\E \|Y(t)\|^2 &= \sum\limits_{k=1}^{\infty} q_k \E\Bigg[ \int\limits_0^t (t-s)^{-\alpha}e^{(t-s)\lambda_k} ~\txtd\beta^H_k(s)\Bigg]^2\\
& \leq 2 \sum\limits_{k=1}^{\infty} q_k\int\limits_0^t (t-s)^{-2\alpha} e^{2(t-s)\lambda_k} K^2(t,s)~\txtd s\\
&+ 2 \sum\limits_{k=1}^{\infty}q_k \int\limits_0^t \int\limits_s^t|[ (t-r)^{-\alpha} e^{(t-r)\lambda_k} -(t-s)^{-\alpha}e^{(t-s)\lambda_k} ] \frac{\partial K}{\partial r}(r,s)|^2~\txtd r~\txtd s\\
& = :I_1 + I_2.
\end{align*}
Here, the fractional kernel $K$ is given by (see~\cite{D})
\begin{align*}
K(t,s)= c_H (t-s)^{H-\frac{1}{2}} + s^{H-\frac{1}{2}} F\Big(\frac{t}{s}\Big),
\end{align*}
where $c_H>0$ is a constant and
\begin{align*}
    F(z)= c_H \Big(\frac{1}{2}-H\Big)\int_0^{z-1} r^{H-\frac{3}{2}} (1- (1+r)^{H-\frac{1}{2}})~\txtd r.
\end{align*}
The estimate of $I_1$ relies on the following property of the kernel (\cite[Theorem 3.2]{D}) 
\begin{align}\label{k:op}
    K(t,s)\leq c(H) (t-s)^{H-\frac{1}{2}} s^{H-\frac{1}{2}}.
\end{align}

In order to estimate $I_1$ we  observe that for $\lambda>0$
\begin{align*}
C(H) \int\limits_0^t e^{-2\lambda(t-s)}(t-s)^{2(H-\alpha)-1}s^{2H-1}~\txtd s 
&=C(H)\lambda^{-2(H-\alpha)} \int\limits_0^{2\lambda t} e^{-v}v^{2H-1} (t-\frac{v}{2\lambda})^{2(H-\alpha)-1}~\txtd v,
\end{align*}
where we use the substitution $v=\frac{t-s}{2\lambda}$. This further entails
\begin{align*}
&\int\limits_0^{2\lambda t} e^{-v}v^{2H-1} (t-\frac{v}{2\lambda})^{2(H-\alpha)-1}~\txtd v\\
&= \int\limits_0^{\lambda t } e^{-v}v^{2H-1} (t-\frac{v}{2\lambda})^{2(H-\alpha)-1}~\txtd v + \int\limits_{\lambda t }^{2\lambda t }  e^{-v}v^{2H-1} (t-\frac{v}{2\lambda})^{2(H-\alpha)-1}~\txtd v\\
& \leq (t/2)^{2(H-\alpha)-1} \int\limits_0^{\infty} e^{-v} v^{2H-1}~\txtd v + (\lambda t)^{2(H-\alpha)-1} \int\limits_{\lambda t }^{2\lambda t }e^{-v}(t-\frac{v}{2\lambda})^{2(H-\alpha)-1}~\txtd v\\
& \leq C(H) t^{2(H-\alpha)-1} + (\lambda t )^{2(H-\alpha)-1} \int\limits_0^{\lambda t} e^{-(2\lambda t -v')} (v'/2\lambda)^{2(H-\alpha)-1}~\txtd v'\\
& \leq C(H) t^{2(H-\alpha)-1} +C(H)(\lambda t )^{2(H-\alpha)-1} e^{-\lambda t } (\lambda t )^{2(H-\alpha)}.
\end{align*}


Now the second term will become small for $t\to\infty$ but we still have a $t^{2(H-\alpha)-1}$ due to the first estimate. However, for large times the term $I_2$ is dominant. To this aim, one uses that $\frac{\partial K}{\partial r}(r,s)\leq 0$ for every $ r,s\in(0,t]$ together with $|\frac{\partial K}{\partial r}(r,s)|\leq C(H)(r-s)^{H-\frac{3}{2}}$. Regarding this, one can infer similarly to~\cite[Theorem 4]{TTV} that
 \begin{align*}
 & \int\limits_0^t \int\limits_s^t|[ (t-r)^{-\alpha} e^{-\lambda (t-r)} -(t-s)^{-\alpha}e^{-\lambda(t-s)} ] \frac{\partial K}{\partial r}(r,s)|^2~\txtd r~\txtd s \\
 &\leq C(H)\int_0^t \int_0^u \int_0^ u (u-v_1)^{H-3/2}(u-v_2)^{H-3/2} \Big( (v_1v_2)^{-\alpha} e^{-\lambda (v_1+v_2)}  -(v_1 u)^{-\alpha}e^{-\lambda (v_1+u)}\\
 &\hspace*{76 mm}-(u v_2)^{-\alpha}e^{-\lambda (u+v_2) } + u^{-2\alpha} e^{-2\lambda u} \Big ) ~\txtd v_2~\txtd v_1~\txtd u \\
 & \leq C(H)  \int_0^t \Bigg( \int_0^u (u-v)^{H-3/2} (u^{-\alpha} e^{-\lambda u} - v^{-\alpha} e^{-\lambda v}) ~\txtd v\Bigg)^2~\txtd u\\
 &\leq \widetilde{C}(H) \lambda^{-2(H-\alpha)}\int_0^{\lambda t} \Bigg( \int_0^u (u-v)^{H-3/2} (u^{-\alpha} e^{- u} - v^{-\alpha} e^{-v}) ~\txtd v\Bigg)^2~\txtd u,
 \end{align*}
 since \begin{align*}
     \sup_{t\geq 0 }\int_0^{\lambda t} \Bigg( \int_0^u (u-v)^{H-3/2} (u^{-\alpha} e^{- u} - v^{-\alpha} e^{-v}) ~\txtd v\Bigg)^2~\txtd u <\widetilde{C},
 \end{align*}
 according to~\cite[Theorem 3 and Appendix]{TTV}.
\qed\end{proof}

\begin{remark}
The result in Lemma~\ref{conv2} is optimal for $t\geq 1$. Due to the estimate of the fractional kernel in~\eqref{k:op} a singularity occurs in the upper bound of $I_1$ as $t\to 0$. 
For $t\in(0,1]$ the assertion can be proved as in~\cite[Theorem 11.11]{DuncanMaslowski2}. This follows by a computation which uses the structure of the fractional kernel in order to derive an estimate on the stochastic convolution~\cite[Theorem 11.8]{DuncanMaslowski2}. More precisely, for an analytic semigroup $S(s)=e^{sL}$, a trace-class fractional Brownian motion $W^H(\cdot)$ and $\beta\in(\frac{1}{2}-H,\frac{1}{2})$, the following estimate holds true for a positive constant $c=c(\beta)$ (\cite[Theorem 11.8]{DuncanMaslowski2})
\begin{align*}
    \E\Big\| \int_0^T S(t-s)~\txtd W^H(s) \Big\|^2\leq c \int_0^T \Bigg(\frac{\|S(s)\|^2}{(T-s)^{1-2H}} + \frac{\|S(s)\|^2}{s^{2\beta}}\Bigg) ~\txtd s.
\end{align*}
The additional pole $(t-s)^{-\alpha}$ in $Y$ does not change this result very much, only the order of the poles on the right hand side might be modified by an $\alpha$, which can be chosen arbitrarily small. 
But we refrain from repeating the whole argument and all estimates of \cite{DuncanMaslowski2}.

Nevertheless this estimate gives bounds that are unbounded in $T$ if $T$ is large. Therefore for large times, a slightly different argument is required~\cite[Theorem 11.13]{DuncanMaslowski2}, or the computation in Lemma \ref{conv2}.
\end{remark}

\begin{remark}
Note that for the computations in Lemmas~\ref{conv1} and \ref{conv2}, the assumption $\text{tr}~Q<\infty$ is not necessary and can be replaced by $\sum_{k=1}^{\infty} q_k |\lambda_k|^{-2(H-\alpha)}<\infty$ for $\alpha\in[0,H)$. However, we use the trace-class condition on the noise for the H\"older estimates required in Section~\ref{s:mainresults}. 

\end{remark}




\end{document}